\begin{document}

\title[\null]
{ $L^2$-extension theorems for jet sections of nef holomorphic vector bundles on compact K\"ahler manifolds and  rational homogeneous manifolds, I.}

\author[\null]{Qilin Yang}

\address{ Department of Mathematics,
Harvard University,  ~Cambridge,~MA ~02138,~U. S. A.} %
\email{qlyang@math.harvard.edu}
\address{ Department of Mathematics,
Sun Yat-Sen University, ~510275, ~ Guangzhou, ~P. R.  CHINA.} %
\email{yqil@mail.sysu.edu.cn}

\keywords{Nef vector bundle, Hermitian matrix current, $L^2$- extension theorem, Holomorphic jet section, Campana-Peternell conjecture}
  \subjclass[2010]{14J45, 32D15, 32J25, 32n10, 32Q15, 32U40}

\begin{abstract}
In this paper we study holomorphic vector bundles with singular Hermitian metrics whose curvature are Hermitian matrix currents. We obtain an extension theorem for
holomorphic jet sections of nef holomorphic vector bundle on compact K\"ahler manifolds. Using it we prove that Fano manifolds with strong Griffiths nef tangent bundles are rational homogeneous spaces.
\end{abstract}

\maketitle
\baselineskip=10pt


\newcommand{\C}{\Bbb C}
\newcommand{\kd}{\mathcal{D}}
\newcommand{\ko}{\mathscr{O}}
\newcommand{\ki}{\mathcal{I}}
\newcommand{\kj}{\mathscr{J}}
\newcommand{\kf}{\mathcal{F}}
\newcommand{\Pj}{\Bbb P}
\newcommand{\Z}{\Bbb Z}
\newcommand{\R}{\Bbb R}
\newcommand{\Q}{\ensuremath{\mathbb Q}}
\newcommand{\lnorm}{\left|\left|}
\newcommand{\rnorm}{\right|\right|}
\newcommand{\fk}{\mathfrak{k}}
\newcommand{\fp}{\mathfrak{p}}
\newcommand{\fm}{\mathfrak{m}}
\newcommand{\cf}{\mathrm{cf.}}
\newcommand\grad{\rm grad}
\newcommand\lgr{\longrightarrow}
\newcommand\rw{\rightarrow}
\newcommand\lmp{\longmapsto}
\newcommand\lie{{\cal L}{\rm ie}}
\newcommand\Lm{\Lambda}
\newcommand\lmd{\lambda}
\newcommand\lc{\lceil}
\newcommand\rc{\rceil}
\newcommand\al{\alpha}
\newcommand\bt{\beta}
\newcommand\el{\epsilon}
\newcommand\gm{\gamma}
\newcommand\dt{\delta}
\newcommand\G{\Gamma}
\newcommand\eg{e.g.}
\newcommand\ie{i.e.}
\newcommand\resp{\rm resp.}
\newcommand\Her{\rm Her}
\newcommand\Aut{\rm Aut}
\newcommand\Hom{\rm Hom}
\newcommand\ov{\overline}
\newcommand\be{\beta}
\newcommand\om{\omega}
\newcommand\df{\rm d}
\newcommand\bi{\bar{i}}
\newcommand\bj{\bar{j}}
\newcommand\bl{\bar{l}}
\newcommand\bk{\bar{k}}
\newcommand\bs{\bar{\xi}}
\newcommand\bp{\bar{p}}
\newcommand\gd{\rm grad}
\newcommand\lge{\langle}
\newcommand\rg{\rangle}
\newcommand\sta{\Theta_H }
\newcommand\vk{{\varkappa}_{\tau}}
\newcommand\wh{\widetilde{H}}
\newcommand\wf{\widetilde{F}}
\newcommand\pqm{\Lambda^{p,q}T^*_M }
\newcommand\ppm{\Lambda^{p,p}T^*_M }
\newcommand\pqme{\Lambda^{p,q}T^*_M\otimes E }
\newcommand\dpq{C^k_0(M,\Lambda^{p,q}T^*_M) }
\newcommand\dfg{C^k_0(M,\Lambda^{f,g}T^*_M\otimes \C^r) }
\newcommand\dst{C^k_0(M,\Lambda^{s,t}T^*_M\otimes \C^r) }
\newcommand\dnpnq{C^k_0(M,\Lambda^{n-p,n-q}T^*_M) }
\newcommand\dnqi{C^{\infty}_0(M,\Lambda^{p,q}T^*_M) }
\newcommand\dnqie{C^{\infty}_0(M,\Lambda^{n,q}T^*_M\otimes E) }
\newcommand\dpqd{\mathcal{D}'^k (M,\Lambda^{p,q}T^*_M) }
\newcommand\dpqe{\mathcal{D}(M,\Lambda^{p,q}T^*_M\otimes E) }
\newcommand\lpqe{L^2 (M,\Lambda^{p,q}T^*_M\otimes E) }
\newcommand\lnqe{L^2 (M,\Lambda^{n,q}T^*_M\otimes E) }
\newcommand\End{\mbox{{\rm End}}\, }
\newcommand\im{\mbox{{\rm im}}\, }
\newcommand\Ima{\mbox{{\rm Im}}\, }
\newcommand\Ker{\mbox{{\rm Ker}}\, }

\newtheorem{theorem}{Theorem}[section]
\newtheorem{proposition}[theorem]{Proposition}
\newtheorem{corollary}[theorem]{Corollary}
\newtheorem{lemma}[theorem]{Lemma}
\newtheorem{example}[theorem]{Example}
\newtheorem{remark}[theorem]{Remark}
\newtheorem{definition}[theorem]{Definition}

\tableofcontents

\section{Introduction}
In \cite{mor} and \cite{sy}, Mori, Siu and Yau, using different methods, proved the Hartshorne-Frankel  conjecture, which says that a compact K\"ahler manifold with positive bisectional curvature is biholomorphic to the complex projective space. A beautiful theorem established later by  Mok in \cite{mok}, which states that a simply connected irreducible compact K\"ahler manifold with non-negative bisectional curvature such that the Ricci curvature is positive at one point is isometric to the complex projective space or the irreducible compact Hermitian symmetric space of rank $\geq 2.$  Using the splitting theorem of Howard-Smyth-Wu, we get a complete classification of all compact simply connected K\"ahler manifolds of nonnegative holomorphic bisectional curvature.

In \cite{cp}, Campana and Peternell initiate the classification of projective manifolds
whose tangent bundles are nef, a more degenerate curvature condition than non-negative bisectional curvature.  Recall that a holomorphic line bundle $L$ on a projective manifold 
$M$ is
called nef if the intersection number $L\cdot C\geq 0$ for every irreducible curve $C\subset M,$ and a holomorphic vector bundle on $M$ is called nef if the associated tautological line bundle 
$\ko_{\Pj (E^*)}(1)$ is a nef line bundle. If we denote the first Chern class of $L$ by $c_1(L)$ then  $L\cdot C=\int_C c_1(L)$ when $C$ is smooth.
 Hence a holomorphic vector bundle 
over a projective manifold with nonnegative bisectional curvature  is a nef vector bundle, but the converse is not always  true.
Later in \cite{dps},  Demailly, Campana and Peternell generalized the definition of nefness for holomorphic vector bundles on projective manifolds to arbitrary compact complex manifolds.
 They proved 
that the Albanese morphism of  a compact  K\"ahler manifold with nef tangent bundle is a submersion and whose fibers are  Fano manifolds with nef tangent bundles. Since by Mok's theorem any Fano manifold with nonnegative
holomorphic bisectional curvature is a Hermitian symmetric space, in view of this,
 Campana and Peternell \cite{cp} formulated a corresponding  conjecture claims that a Fano manifold with nef tangent bundle is  a rational homogeneous space, i.e., quotient of a semisimple Lie group by a parabolic subgroup. They showed that their conjecture is  true up to dimension four. Later many special cases are checked to be true, we will not report them here even there are some very important ideals and techniques found during recent years.
 We refer readers to the  survey \cite{sur} for a detailed report of related works.

In proving Hartshorne-Frankel  conjecture, Mori, Siu and Yau used the characterization  of projective space established by Kobayashi and  Ochiai. Mok's proof of his famous theorem also used the Berger-Simons theorem on the characterization of  irreducible symmetric spaces of rank $\geq 2.$ 
However, until now we haven't a unified theorem which characterize   all of the  rational homogeneous manifolds at the same time.  This is a main difficult  when we are
in search of a proof of the Campana-Peternell conjecture. In  \cite{br}, Borel and Remmert  established a beautiful structure  theorem of compact homogeneous K\"ahler manifolds which says 
that they are direct product of complex tori and rational homogeneous manifolds.  Since the Fano manifolds are simply connected K\"ahler manifolds, if we could prove they are holomorphical homogeneous then they are
 rational homogeneous.  The homogeneity,  a relaxed characterization, is  much easier to deal with than giving a straight way  and a precise characterization of the rational homogeneous manifolds. 

How to prove a compact complex manifold is homogeneous? A natural way is to  decide its holomorphic automorphism group (which is a complex Lie group by Bochner-Montgoemery's theorem)  and   prove  the automorphism group's  action is homogeneous. However it is usually very difficult to calculate  the holomorphic automorphism group of a complex manifold. passing to  infinitesimal level, if we could prove it has many holomorphic vector fields, then its holomorphic automorphism group, whose Lie algebra  could be identified with the set of global holomorphic vector fields, would be large enough to act transitively on it.  

The action of the diffeomorphism group of a real manifold is alway transitive since we could glue the real local vector fields to global vector fields in a smooth way. Every complex manifold has a Stein open covering  and every Stein open subset have many holomorphic vector fields.  However it is very difficult to patch up analytically the local holomorphic vector fields to get global holomorphic vector fields.
This is  due to the rigidity of holomorphic objects. But the rigidity may have an advantage in some special cases,  where  it could  propagate form local to global.
We also called this phenomenon  the extension or analytical continuation of a holomorphic object. The corresponding theorems are called extension theorems,
and was extensively studied for many years  and has a long history.

Siu obtained many important extension theorems and explained related techniques in the monograph  \cite{sbook} ,   see also his ICM lecture \cite{siu80} for  a report.
 It is well known (\cite{siu74}) that we could extend a $L^2$-bounded holomorphic function  form a sub manifold to an open neighborhood, however, we did not know how large  the scale of the open neighborhood is until
 the  work of Ohsawa-Takegoshi appeared  in \cite{ot}. Different form the older extension theorems,  the Ohsawa-Takegoshi's extension theorem  gains more information including controlling the $L^2$ norm of the extended holomorphic functions in a precise way,  hence it found many applications.  A surprise applications is to prove   the deformational invariance of plurigenera of the algebraic variety of general type by Siu in \cite{siu98},   he successfully extended holomorphic object  from an open submanifold to a family of compact complex manifold. This is usually impossible as we know even an open complex manifold have many holomorphic holomorphic functions but none  of them could extend to the compactified complex manifold. 
  
 A careful check the assumptions in the Campana-Peternell conjecture, we meet the following difficulties. Firstly,  we have no extension theorems of the Ohsawa-Takegoshi type for holomorphic tangent bundle of high dimensional complex manifold. Though the  extension theorems of the Ohsawa-Takegoshi type are extensively studied in recent years, almost all are concentrate in line bundles case. Secondly, the nefness assumption of the tangent bundle as a curvature condition is imposed in fact on the tautological line bundle of projective bundle of the dual of the tangent bundle, rather directly on the tangent bundle itself. Hence we need to construct Hermitian metrics on the tangent bundle  only under the assumption that the tautological line bundle have a singular Hermitian metric whose curvature is semi positive  in the sense of distribution. It is not so difficult to construct Hermitian metric on the tautological line 
 bundle form those on the tangential bundle and the inverse process is not known up to present.  In this paper we will study a weaker version  of Campana-Peternell conjecture by changing the assumption of curvature conditions, imposing them  directly on the tangential bundle.  Accordingly, as a solution of the first difficult, 
 we will study extension theorems for higher rank holomorphic vector bundles with singular Hermitian metric. 
 
What curvature condition imposed on a holomorphic vector bundle which is close to that the tautological line  bundle is nef?  We know nefness is a generalization of  ampleness: a holomorphic vector bundle is called ample if the corresponding tautological line bundle is ample. A vector bundle is ample is equivalent to that  it is Griffith positive.  Hence we have a  weaker version  of Campana-Peternell conjecture that a Fano manifold with 
``Griffiths nef" (Definition \ref{nefff}) tangential bundle is rational homogeneous, here the new definition is a nef version of Griffiths positivity, just like nef line bundle is a generalization a Griffiths  positive line bundle or equivalently, an ample line bundle.  In this paper we add more condition on the positivity on the tangent bundle, call the Griffiths strongly nef positivity and prove the following main result:       
   
  \vskip0.5cm

\noindent{\bf Main Theorem.}{\it A Fano manifold whose tangent bundle is strongly Griffiths nef is a rational homogeneous space}.

\vskip0.5cm

The  main tool used in proving our main theorem is a $L^2$-extension theorem on compact K\"ahler manifold,  which we will give in Section  \ref{subs4} after a series preliminary work
from Section  \ref{subs2} to  Section  \ref{subs3}.
We will study holomorphic vector bundles with singular Hermitian metrics, to define their curvatures we need to study Hermitian matrix current. In Section \ref{subs2} we establish a representation theorem for  Hermitian matrix current, and study the positivity of  the Hermitian matrix currents. In Section \ref{subs3} we introduce the singular Hermitian metrics  via an example and give the definitions of Griffiths nef and Nakano nef holomorhic vector bundles, we clarify the relations among Griffith nefness, Nakano nefness, and nefness defined in \cite{dps}(called nef in usual sense). 
Finally we gives  Bochner-Kodaira identity for holomorphic vector bundles with singular Hermitian metrics
and existence theorem for $\bar{\partial}$ and approximate $\bar{\partial}$-equations which are extensively used in the next section for studying extension theorem. 
In Section \ref{subs4} we  establish Ohsawa-Takegoshi type extension theorems for holomorphic jet sections with finite isolated support of Nakano semi positive  vector bundles and strong Nakano nef vector bundles over bounded Stein domains and compact K\"ahler manifolds respectively. Finally in Section \ref{subs5} we prove that a  compact K\"ahler manifold with strong Griffiths  nef tangent bundle is  homogeneous, and as a corollary we prove the Main theorem.

\section{Hermitian matrix current}\label{subs2}

Singular Hermitian metric on a holomorphic line bundle $L$ over a complex manifold was systematically studied by Siu  and Demailly, we may refer to  the ICM lectures \cite{siu02} and \cite{de06} for a report of related works.  If we write the  metric of $L$ locally as $h=e^{-\varphi},$ where $\varphi$ is only a locally integrable function. It is locally a PSH function if $L$ is a nef or more generally a pseudoeffective line bundle. The Chern  curvature of $L$ is $\Theta_h=id'd''\varphi.$ Here the derivatives are in the sense of distribution and $\Theta_h$ is a closed positive current.
In this paper we will consider similar constructions for higher rank holomorphic vector bundles, i.e.,
study holomorphic vector bundles with measurable metrics. The curvatures are  the weak differentials of metrics with measurable coefficients, we need to consider them in the sense of distribution. We refer readers to \cite{siu74}, \cite{bt} and \cite{debo} for the theory  of closed positive currents. In this section we will give the definitions of weak derivatives of a Hermitian matrix function and study the holomorphic vector bundle with singular metrics whose curvatures are positive Hermitian matrix currents.

Let $M$ be a complex manifold of complex dimension $n$ and $E$ a smooth complex vector bundle of complex rank $r$ on $M.$ In this paper we use $C^k_0 (M,E)$ to denote the set of $E$-valued sections with compact supports and whose derivatives up to $k$-order are continuous; and  $L^{p}(M,E)$({\rm resp.} $L^{p}_{loc}(M,E)$) to denote the set of  $E$-valued sections which are $L^p$ ({\rm resp.} locally $L^{p}$) integrable; and $L^{\infty}(M,E)$({\rm resp.} $L^{\infty}_{loc}(M,E)$) to denote the set of  $E$-valued sections which are bounded ({\rm resp.} locally bounded); and $W^{k,p}(M,E)$
to denote the set of sections $s\in   L^{p}(M,E)$ whose weak derivatives up to $k$-order are also in   $L^{p}(M,E).$ 

If $E$ is Hermitian vector bundle, we use $\Her(E)$ to denote the sub-bundle of $\End(E),$ consisting of Hermitian transformations on each fibre of $E,$ and  $\Her^+(E)$ the subset of $\Her(E)$ whose elements are everywhere non-negative definite Hermitian  transformations. We use $C^k(M,\Her(E))$ to denote the set of Hermitian transformation with matrix representation $H=(h_{\al\bar{\bt}})_{r\times r}$ such that every entry $h_{\al\bar{\bt}}$ is continuous differentiable up to $k$-order. Similarly we can define $L^p(M,\Her(E))$ and  $L^p_{loc}(M,\Her(E))$ for any $1\leq p\leq \infty.$ If $E$ is a trivial bundle we denote, for example,
$C^k(M,\Her(E))$ by $C^k(M,\Her(\C^r)).$

 Let $T_M$ be the holomorphic tangent bundle of $M,$  $T^*_M$ and $\overline{T}^*_M$  denote  the holomorphic and antiholomorphic cotangent bundles of $M$  respectively. Let $\pqm$ be the tensor product bundle $\wedge^p T^*_M \otimes \wedge^p \overline{T}^*_M.$ Equip the space
 $\dpq$  with the following topology induced by semi-norms: write $u\in \dpq$ in local holomorphic coordinate $(z_1,\cdots,z_n)$ as $$u=\sum_{|I|=p,|J|=q} u_{I\bar{J}}(z)dz_I\wedge d\bar{z}_{J},$$
  where $dz_I$ denotes $dz_{i_1}\wedge\cdots\wedge dz_{i_p},$ while multi-index $I$ is a short writing  for $(i_1,\cdots,i_p)$
  and $|I|=i_1+\cdots+i_p.$ The semi-norms $p^k_K$ on $u$ for any compact subset $K\subset M,$   are defined by
 \begin{align}p^k_{K}(u)=\sup_{x\in K}\max_{|\al|\leq k}\max_{|I|=p,|J|=q}|\partial^{\al}u_{I\bar{J}}(x)|,\end{align}
 where $\al=(\al_1,\cdots,\al_{2n}),$ and  $x=(x_1,\cdots,x_{2n})$ are real coordinate of $M,$ and $\partial^{\al}=\partial^{|\al|}\slash \partial x_1^{\al_1}\cdots \partial x_{2n}^{\al_{2n}}.$
 With respect to these semi-norms $\dpq$ is a complete locally convex topological space.
   Let $\dpqd$  be the space of continuous linear functionals on $\dnpnq,$ its elements are called the $k$-order currents of bidegree $(p,q).$

 The space ${C^k_0}(M,\Lambda^{p,q}T^*_M\otimes\C^r)$ of $\C^r$-valued differential forms has a naturally defined  topology induced by the topology on
 ${C^k_0}(M,\Lambda^{p,q}T^*_M),$ defined by the seminorms $$p^k_K(u)=\max_{1\leq j\leq r} p^k_K(u_j)$$ for any $u=(u_1,\cdots,u_r)\in {C^k_0}(M,\Lambda^{p,q}T^*_M\otimes\C^r)$ with $u_j\in\dpq.$

A {\it matrix current of order $k$} and bidegree $(p,q)$ on $M$ is a  bilinear function $\dfg\times \dst\rightarrow \C,(u,v)\mapsto U(u,v),$ where $f+s=n-p$ and $g+t=n-q,$ satisfying (I). $U$ is bilinear linear, i.e.,   $U(au,bv)=abU(u,v)$ for any $a,b\in\C;$ and (II).  $U$ is continuous, which means for any compact subset $K\subset M,$ there exists a positive constant $C$ such that
\begin{align}|U(u,v)|\leq C\sum_{r+s=k} p^r_K(u)p^s_K(v).\label{norm}\end{align}
  Given a matrix current $U$ we may get a sesquelinear linear function $\hat{U}$ defined by $\hat{U}(u,v)=U(u,\bar{v}).$
   We called $\hat{U}$  {\it a sesquelinear linear current}. We can also inverse the process to obtain a matrix current form a sesquelinear linear current.
 Since the set of  order $k$ and bidegree $(p,q)$- matrix currents is the same as the set of order $k$ and bidegree $(p,q)$- sesquelinear linear currents, we denote them by the same symbol $D'^k_{p,q}(M,\End(\C^r)).$

  Given a sesquelinear linear current $H\in D'^k_{p,p}(M,\End(\C^r)),$ its conjugate $\bar{H}:\dfg\times \dst\rightarrow \C$ is defined  by $\bar{H}(\xi,\eta)=\overline{H({\xi},{\eta})};$ its transpose $H^t:\dst\times \dfg\rightarrow \C$ is defined by $H^t(\xi,\eta)=H(\eta,\xi).$ A sesquelinear linear current $H$ of order $k$ and bidegree $(p,p)$
 is called a {\it  Hermitian matrix current}  of order $k$ and bidegree $(p,p)$ if  it is Hermitian symmetric, i.e., $\bar{H}=H^t.$
 The set of  order $k$ and bidegree $(p,p)$- Hermitian matrix currents is denoted by $D'^k_{p,p}(M,\Her(\C^r)).$

\begin{example}{\rm  A  Hermitian matrix current of bidegree $(0,0)$ is called a {\it generalized Hermitian matrix function}.
 Let $L^1_{loc}(M, \Her ^+(T_M))$ be the set of Hermitian metrics $H=(h_{\al\bar{\beta}})_{n\times n}$ on $M,$ where $h_{\al\bar{\beta}}$ is locally defined function and $h_{\al\bar{\beta}}\in L^1_{loc}(\Omega)$ is locally integrable for any open subset  $\Omega\subset M.$ Then $H$ defines a generalized Hermitian matrix function. For example for $\xi,\eta\in {C^k_0}(M,T_M )$ with notion $\eta^t=(\eta_1,\cdots,\eta_n)$ the transpose of the column  vector $\eta$, let $ d\mu$ denote the Lebesgue measure then $\xi d\mu \in {C^k_0}(M,\Lambda^{n,n}T^*_M\otimes T_M ),$
   The function $\int_{M}{\eta}^t H\bar{\xi} d\mu$  is clearly sesquelinear, continuous, and symmetric.} \end{example}

   \begin{example}{\rm  Let $U=(U_{\al \bt})_{r\times r}$ be a matrix of currents of of $k$-order and bidegree $(p,q),$ and each entry $U_{\al \bt}\in D'^k(M,\Lambda^{p,q}T^*_M)$ is a $(p,q)$-form with $k$-order distribution coefficients. Define
   \begin{equation}U(u,v)= \int_M  u^t Uv,\label{matr}\end{equation}
   where
   $$\begin{array}{rcl}
   u=(u_1,\cdots,u_r)^t &\in& \dfg\\
   v=(v_1,\cdots,v_r)^t &\in& \dst\\
    \end{array}$$
    with $n=f+r+p=g+s+q,$ and $u_{\al}\in C^k_0(M,\Lambda^{f,g}T^*_M)$ and $v_{\al}\in C^k_0(M,\Lambda^{\al,\bt}T^*_M)$ for $\al=1,\cdots,r,$  and for brevity we don't write out the wedges in (\ref{matr}) and
    $u^t U v=\sum_{\al,\bt}   u_{\al}\wedge U_{\al\bt}\wedge v_{\bt}.$
       It is easy to check that $U$ is a matrix current of order $k$ and bidegree $(p,q).$ If $(H_{\al\bt})$ is Hermitian symmetric matrix of currents  of $k$-order and bidegree $(p,p),$ that is if $\bar{H}_{\al \bt}=H_{\bt\al}$ for all $\al,\bt,$ then
   \begin{equation}H(u,v)=\int_M u^t H \bar{v},\label{matr2}\end{equation}
   defines a Hermitian matrix current $H.$
   }
   \end{example}

\begin{proposition}A matrix current $U\in {D'}^k_{p,q}(M,\End(\C^r))$  is a matrix of currents of order $k$ and bidegree $(p,q).$
\end{proposition}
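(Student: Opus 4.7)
The plan is to extract the matrix entries one by one and show each is a scalar current of order $k$ and bidegree $(p,q)$. Using the standard basis $\{e_1,\ldots,e_r\}$ of $\C^r$, define for each $\al,\be\in\{1,\ldots,r\}$ the scalar bilinear functional
$$U_{\al\be}:C^k_0(M,\Lambda^{f,g}T^*_M)\times C^k_0(M,\Lambda^{s,t}T^*_M)\to\C,\quad (\phi,\psi)\mapsto U(\phi e_\al,\psi e_\be).$$
The $\C$-bilinearity of $U$ yields $U(u,v)=\sum_{\al,\be}U_{\al\be}(u_\al,v_\be)$ for $u=\sum_\al u_\al e_\al$, $v=\sum_\be v_\be e_\be$, and the estimate (\ref{norm}) on $U$ restricts to the same kind of estimate for each $U_{\al\be}$. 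So the problem reduces to showing that each scalar bilinear functional $U_{\al\be}$ is represented by a $(p,q)$-form with $k$-order distribution coefficients, i.e.\ by an element of $\mathcal{D}'^k(M,\pqm)$.

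For this, I would work locally in a holomorphic chart $(V;z_1,\ldots,z_n)$ and expand test forms as
$$\phi=\sum_{|I|=f,\,|J|=g}\phi_{IJ}\,dz_I\wedge d\bar z_J,\qquad \psi=\sum_{|K|=s,\,|L|=t}\psi_{KL}\,dz_K\wedge d\bar z_L.$$
Bilinearity expresses $U_{\al\be}(\phi,\psi)$ as a sum of contributions of the form $U_{\al\be}(\phi_{IJ}\,dz_I\wedge d\bar z_J,\psi_{KL}\,dz_K\wedge d\bar z_L)$, each of which is a continuous bilinear form in the scalar test functions $(\phi_{IJ},\psi_{KL})$. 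A candidate current is then built as a $(p,q)$-form $\Omega_{\al\be}=\sum_{|M|=p,|N|=q}\omega^{\al\be}_{MN}\,dz_M\wedge d\bar z_N$ whose coefficient distribution $\omega^{\al\be}_{MN}$ is defined, for each ordered partition $M=I\cup K$, $N=J\cup L$ with canonical sign $\varepsilon(I,K;J,L)$, by
$$\omega^{\al\be}_{MN}(\chi):=\varepsilon(I,K;J,L)\,U_{\al\be}\bigl(\chi_1\,dz_I\wedge d\bar z_J,\;\chi_2\,dz_K\wedge d\bar z_L\bigr)$$
for an arbitrary factorization $\chi=\chi_1\chi_2$ with $\chi_1,\chi_2\in C^\infty_0(V)$. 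The continuity bound (\ref{norm}) ensures $\omega^{\al\be}_{MN}$ is a $k$-order distribution, and a partition-of-unity argument glues the local data into a global matrix of currents $\Omega=(\Omega_{\al\be})$ on $M$.

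The main obstacle is well-definedness: one must check that $U_{\al\be}(\chi_1\zeta_1,\chi_2\zeta_2)$ depends only on the product $\chi_1\chi_2$ (for fixed monomial forms $\zeta_1,\zeta_2$), so that $\omega^{\al\be}_{MN}(\chi)$ is independent of the chosen factorization and of coordinates up to the transformation rule of $(p,q)$-forms. The cleanest route is to view $(\chi_1,\chi_2)\mapsto U_{\al\be}(\chi_1\zeta_1,\chi_2\zeta_2)$ as a continuous bilinear form on $C^k_0(V)\times C^k_0(V)$; a disjoint-support test using (\ref{norm}) shows its Schwartz kernel on $V\times V$ is supported on the diagonal, hence descends to a single $k$-order distribution on $V$ paired with $\chi_1\chi_2$. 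Once this locality is established, the identity $U(u,v)=\int_M u^t\Omega v$ follows by linearity from the scalar case, completing the proof.
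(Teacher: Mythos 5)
Your plan mirrors the paper's: reduce to scalar entries $U_{\al\bt}$, work in local coordinates, and build coefficient distributions by feeding in test functions that are products $\chi_1\chi_2$. The paper does this by fixing $u$, applying Riesz representation to get currents $U_\bt(u)$, then fixing $v_\bt$ and applying it again to $F_\bt(u_\al)$, finally restricting to monomial test forms $u_\al = h_1\,dz_P\wedge d\bar z_Q$ and $v_\bt = h_2\,dz_R\wedge d\bar z_S$ and declaring $U_{\al\bt,IJ}(h) := {\rm Sgn}\cdots U(u_\al,v_\bt)$ with $h = h_1h_2\,dV$. Your presentation of the same reduction is cleaner.

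The substantive difference is that you name the real obstacle: the value $U_{\al\bt}(\chi_1\zeta_1,\chi_2\zeta_2)$ must depend only on the product $\chi_1\chi_2$, otherwise $\omega^{\al\bt}_{MN}$ is not defined. The paper never addresses this; it implicitly assumes the map $h\mapsto U_{\al\bt,IJ}(h)$ is single-valued and then asserts continuity via the chain $|U_{\al\bt,IJ}(h)|\leq C\sum_{r+s=k}p^r_K(h_1)p^s_K(h_2)\leq\bar C\,p^k_K(h)$. That last inequality is false as a pointwise estimate on factorizations: with $h_1=e^{iNx}\chi$, $h_2=e^{-iNx}\chi$ the product $h=\chi^2$ is independent of $N$, while $\sum_{r+s=k}p^r_K(h_1)p^s_K(h_2)\sim N^k$. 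So you have correctly located where the mathematics actually lives, which the paper's proof does not.

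However, your proposed resolution has a gap of its own. The continuity estimate (\ref{norm}), read in the standard way (the bound holds with a $K$-dependent constant for test forms supported in $K$), does \emph{not} force the Schwartz kernel of a bilinear form to be supported on the diagonal. The bilinear form $U(\phi,\psi)=\phi(0)\psi(1)$ on $C^k_0$ functions is bilinear, satisfies $|U(\phi,\psi)|\leq p^0_K(\phi)p^0_K(\psi)$ for any $K\supset\{0,1\}$, yet cannot be written as $\langle T,\phi\psi\rangle$; locality is an additional hypothesis, not a consequence of (II). (If instead one reads (II) as holding for \emph{all} $\phi,\psi$ and \emph{all} $K$, then taking $K$ disjoint from ${\rm supp}\,\phi$ forces $U\equiv 0$, so that reading is degenerate.) Furthermore, even granting diagonal support of the kernel, one must separately exclude transverse derivatives $\partial^\gamma_w\delta(z-w)$ with $\gamma\neq 0$: such kernels pair $\phi$ with $\partial^\gamma\psi$ and do not descend to a distribution paired with $\chi_1\chi_2$. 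Both of these points need to be supplied (or an explicit locality hypothesis added to the definition of matrix current) before the proof closes. You have diagnosed the right difficulty; the cure as stated is not yet complete.
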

\begin{proof} Fix $u\in {C^k_0}(M,\Lambda^{f,g}T^*_M\otimes\C^r)$ then $U(u,\cdot): {C^k_0}(M,\Lambda^{s,t}T^*_M\otimes\C^r)\rw \C$ is a linear function satisfying  $$|U(u,v)|\leq C_u p^k_K(v)$$ for any compact subset $K\subset M.$
By (\ref{norm}), $C_u$ is a fixed constant depending only $u.$ Hence $U(u,\cdot)$ is a $k$-order continuous function on  ${C^k_0}(M,\Lambda^{p,q}T^*_M\otimes\C^r).$ Write $v=(v_1,\cdots,v_r)$ with $v_j\in {C^k_0}(M,\Lambda^{p,q}T^*_M).$ Identify the vectors
$$(v_1,0,\cdots,0),\cdots,(0,\cdots,0,v_r)$$  with the forms $v_1,\cdots,v_r$ then  $U(u,v)=\sum_{\bt=1}^r U(u,v_{\bt})$ and  we may consider $U(u,v_{\bt})$ as a linear function on ${C^k_0}(M,\Lambda^{s,t}T^*_M).$
 Hence
\begin{align}U(u,v) =\sum_{\bt=1}^r\int_M U_{\bt}(u)\wedge v_{\bt},\label{current1}\end{align}
where $U_{\bt}(u)$ are $k$-order currents of degree $(n-r,n-s).$ Write $u=(u_1,\cdots,u_r)$ we may get
\begin{align}U(u,v) =\sum_{\al,\bt=1}^r U(u_{\al},v_{\bt})=\sum_{\bt=1}^r\int_M U_{\bt}(u_{\al})\wedge v_{\bt},\label{current2}\end{align}
Now fix $v_{\bt}$ we consider each term $F_{\bt}(u_{\al}):=\int_M U_{\bt}(u_{\al})\wedge v_{\bt}$ as a linear function on ${C^k_0}(M,\Lambda^{f,g}T^*_M).$ Then $$|F_{\bt}(u_{\al})|\leq C_{v_{\bt}} p^k_K(u_{\al}),$$ where  $C_{v_{\bt}}$ is a constant depending only on ${v_{\bt}} .$ 
For any multi-indices $I,J$ with $|I|=p$ and $|J|=q,$ take $u_{\al}=h_1 dz_P\wedge d\bar{z}_Q\in {C^k_0}(M,\Lambda^{f,g}T^*_M)$ and $v_{\bt}=h_2dz_R\wedge d\bar{z}_S\in {C^k_0}(M,\Lambda^{s,t}T^*_M)$ such that $(P\cup R)\cup I=(Q\cup S)\cup J =\{1,\cdots,n\}.$
 Denote $h=h_1 h_2 dz_1\wedge\cdots\wedge dz_{n}\wedge d\bar{z}_1\wedge\cdots\wedge d\bar{z}_{n}\in {C^k_0}(M,\Lambda^{n,n}T^*_M),$ and set $$U_{\al\bt,IJ}(h)={\rm Sgn}(IJPQRS)\int_M U_{\bt}(u_{\al})\wedge v_{\bt}=U(u_{\al},v_{\bt}),$$ where ${\rm Sgn}(IJPQRS)$ is the signature of the permutation
 $(1,\cdots,n,1\cdots,n)\longmapsto (P,Q,I,J,R,S).$
 Then there exist constants $C$ and $\bar{C}$ such that $$|U_{\al\bt,IJ}(h)|\leq C\sum_{r+s=k} p^r_K (h_1) p^s_K (h_2)\leq \bar{C} p^k_K (h)$$ for any compact subset $K\subset M.$ Hence $U_{\al\bt,IJ}$ is a $k$-order distribution. Let
 $$U_{\al\bt}=\sum_{|I|=p,|J|=q}U_{\al\bt,IJ}dz_I\wedge d\bar{z}_J\in \dpqd.$$
Now it is easy to check that \begin{align}F_{\bt}(u_{\al})=\int_M u_{\al} \wedge U_{\al\bt}\wedge v_{\bt}.\label{current3}\end{align}
By (\ref{current1}),(\ref{current2}),(\ref{current3}) we know $U(u,v)=\int_M u^t U v$ and $U$ is a matrix of currents of order $k$ and bidegree $(p,q).$
\end{proof}

\begin{corollary}A Hermitian matrix current $H\in {D'}^k_{p,p}(M,\End(\C^r))$  is a Hermitian matrix of currents of order $k$ and bidegree $(p,p).$
\end{corollary}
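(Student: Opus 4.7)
The plan is to derive this Corollary directly from the preceding Proposition, adding only an elementary check that the Hermitian symmetry of the sesquilinear form descends to Hermitian symmetry of the matrix of representing currents.

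First I would exploit the bijection between sesquilinear currents and bilinear matrix currents (given in the paper by $\hat{U}(u,v)=U(u,\bar{v})$) to pass from $H$ to its associated bilinear matrix current. Applying the Proposition then produces a collection $\{H_{\al\bt}\}_{1\le\al,\bt\le r}$ of $k$-order currents of bidegree $(p,p)$ together with the representation
\begin{equation*}
H(u,v)=\int_M u^{t}\, H\, \bar{v}=\sum_{\al,\bt}\int_M u_\al\wedge H_{\al\bt}\wedge \bar{v}_\bt,
\end{equation*}
valid for all compactly supported $u,v$ of the appropriate complementary bidegrees. In particular $H$ is already a matrix of currents in the sense of the Proposition, so only the Hermitian symmetry of the matrix $(H_{\al\bt})$ remains to be shown.

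Next I would deduce the entrywise identity $\bar{H}_{\al\bt}=H_{\bt\al}$ from the global condition $\bar{H}=H^{t}$. For this I would evaluate both sides on elementary test forms $u=e_\al\otimes\varphi$ and $v=e_\bt\otimes\psi$, where $e_\al,e_\bt$ are the standard basis vectors of $\C^r$ and $\varphi,\psi$ are arbitrary scalar test forms of the correct complementary bidegree. Each side then collapses to a pairing involving a single matrix entry, yielding the distributional identity
\begin{equation*}
\overline{\int_M \varphi\wedge H_{\al\bt}\wedge \bar{\psi}}=\int_M \psi\wedge H_{\bt\al}\wedge \bar{\varphi}.
\end{equation*}
Since $\varphi$ and $\psi$ range over all scalar test forms, this is exactly the relation $\bar{H}_{\al\bt}=H_{\bt\al}$ as $k$-order currents of bidegree $(p,p)$.

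The only delicate point is the sign bookkeeping coming from the wedge reorderings and from conjugation of $(p,p)$-forms, which I expect to be the main (and in fact only) obstacle. However, because the bidegree $(p,p)$ is symmetric on each matrix entry and on both sides of the tested identity, the signs cancel in a uniform way, and no positivity or regularity hypothesis is needed. This establishes that $(H_{\al\bt})$ is a Hermitian matrix of $k$-order currents of bidegree $(p,p)$, as claimed.
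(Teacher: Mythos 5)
The paper states this corollary with no proof at all, treating it as an immediate consequence of the preceding Proposition; your plan supplies exactly the reasoning the paper leaves implicit. Applying the Proposition to obtain a representing matrix of currents $(H_{\al\bt})$ and then testing the abstract identity $\bar H=H^t$ against decomposable test forms $e_\al\otimes\varphi$, $e_\bt\otimes\psi$ to deduce $\bar H_{\al\bt}=H_{\bt\al}$ is the natural and correct route, and the sign bookkeeping you worry about does come out clean because the constraints on test-form bidegrees in the definition of the transpose force the balanced case, where the wedge-reordering sign is automatically $+1$.
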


   The weak  differentials of a matrix  current $U\in {D'}^k_{p-1,q}(M,\End(\C^r)),$ considered as a bilinear current,  are defined by
  \begin{equation}d'U(\xi,\eta)=-(-1)^{f+g}U(d'\xi,\eta)+(-1)^{p+q}U(\xi,d'\eta)\label{diff1}\end{equation}
  \begin{equation}d''U(\xi,\eta)=-(-1)^{f+g}U(d''\xi,\eta)+(-1)^{p+q}U(\xi,d''\eta)\label{diff2}\end{equation}
  for any  $\xi\in\dfg$ and $\eta\in\dst$ with $f+s=n-p$ and $g+t=n-q.$  If $U\in {D'}^k_{p-1,q}(M,\End(\C^r))$ is  a sesquelinear current,  its weak differential are defined by 
  \begin{equation}d'U(\xi,\eta)=-(-1)^{f+g}U(d'\xi,\eta)+(-1)^{p+q}U(\xi,d''\eta)\label{diff3}\end{equation}
   \begin{equation}d''U(\xi,\eta)=-(-1)^{f+g}U(d''\xi,\eta)+(-1)^{p+q}U(\xi,d'\eta)\label{diff4}\end{equation}
  for any  $\xi\in\dfg$ and $\eta\in C^{\infty}_0 (M,\Lambda^{t,s}T^*M\otimes \C^r)$ with $f+s=n-p$ and $g+t=n-q.$
   Set $dU=d'U+d''U.$ A (Hermitian) matrix current $U$ is called {\it closed} if $dU=0.$

  \begin{proposition}\label{dq0} For any  matrix current $U\in D{'}_{p,q}(M,\End(\C^r)),$ $d'^2 U=d''^2U=d^2U=0.$
  \end{proposition}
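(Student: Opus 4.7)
The plan is a routine duality argument: the weak operators $d'$, $d''$ on currents are defined in (\ref{diff1})--(\ref{diff4}) by transferring the derivative (with appropriate signs) to the smooth test forms, so the pointwise identities $d'^2 = d''^2 = 0$ and $d'd'' + d''d' = 0$ valid on $C^\infty$-forms must propagate back to $U$ through two applications of the definition.

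First I would prove $d'^2 U = 0$ in the bilinear case. Let $\xi\in C^k_0(M,\Lambda^{f,g}T^*_M\otimes\C^r)$ and $\eta\in C^k_0(M,\Lambda^{s,t}T^*_M\otimes\C^r)$ of the bidegrees appropriate for $d'^2U$ (so $f+s=n-p-1$ and $g+t=n-q$). Apply (\ref{diff1}) once with $p$ replaced by $p+1$ to expand $d'(d'U)(\xi,\eta)$ in terms of $(d'U)(d'\xi,\eta)$ and $(d'U)(\xi,d'\eta)$; then apply (\ref{diff1}) a second time to each of those. The terms containing $U(d'^2\xi,\eta)$ and $U(\xi,d'^2\eta)$ vanish because $d'^2=0$ on smooth forms. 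The two remaining terms are both scalar multiples of $U(d'\xi,d'\eta)$, and a direct collection of the signs $-(-1)^{f+g}$, $(-1)^{p+q}$, $-(-1)^{(f+1)+g}$, $(-1)^{(p+1)+q}$ shows that their coefficients cancel. Hence $d'^2U(\xi,\eta)=0$ for all test forms.

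The proof of $d''^2U=0$ is word-for-word the same using (\ref{diff2}) and $d''^2=0$ on $C^\infty$-forms. For $d^2U=0$, since $d=d'+d''$ it suffices to show $(d'd''+d''d')U=0$. Running the analogous two-step expansion, the diagonal terms involve $U(d'^2\xi,\eta)$, $U(\xi,d''^2\eta)$ etc.\ and vanish, while the mixed terms produce $U(d'\xi,d''\eta)$ and $U(d''\xi,d'\eta)$ with signs that add to zero after using $d'd''+d''d'=0$ on the test forms. For a sesquilinear current $U$ one simply replaces (\ref{diff1}),(\ref{diff2}) by (\ref{diff3}),(\ref{diff4}); the only difference is that a $d'$ landing on the second slot becomes $d''$, but the cancellation is identical because the conventions in (\ref{diff3}),(\ref{diff4}) are arranged precisely to preserve the sign pattern.

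The only real obstacle is sign bookkeeping, and it is purely mechanical: the two terms that survive after invoking $d'^2=0$ arise with coefficients $(-1)^{f+g+p+q+1}$ and $(-1)^{f+g+p+q}\cdot(-1)$, which sum to zero. I would present this by introducing abbreviations $\epsilon_1=(-1)^{f+g}$ and $\epsilon_2=(-1)^{p+q}$ so the cancellation becomes visible as $-\epsilon_1\epsilon_2 -(-\epsilon_2)(-\epsilon_1)\cdot(-1) = 0$ without having to carry the full exponents through the computation.
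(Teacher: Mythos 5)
Your plan is the same as the paper's: apply the definition of the weak $d'$ twice, discard the terms containing $d'^2\xi$ or $d'^2\eta$, and observe that the two surviving multiples of $U(d'\xi,d'\eta)$ enter with opposite signs (with the $d^2U=0$ case handled via $d'd''+d''d'=0$ on test forms, a step the paper leaves implicit). One small slip in your prose: the stated coefficients $(-1)^{f+g+p+q+1}$ and $(-1)^{f+g+p+q}\cdot(-1)$ are in fact equal, so they do not cancel as written — your $\epsilon$-form $-\epsilon_1\epsilon_2 - (-\epsilon_2)(-\epsilon_1)(-1)=0$ is the correct bookkeeping, giving coefficients $\mp\epsilon_1\epsilon_2$.
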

  \begin{proof}For  any  $\xi\in\dfg$ and $\eta\in\dst$ with $f+s=n-p-2$ and $g+t=n-q.$
  $$\begin{array}{rcl}
  (d'^2U)(\xi,\eta)&=&-(-1)^{f+g} (d'U)(d'\xi,\eta)+(-1)^{p+q-1}(d'U)(\xi,d'\eta)\\
                   &=&-(-1)^{f+g+p+q}U(d'\xi,d'\eta)-(-1)^{f+g+p+q-1}U(d'\xi,d'\eta),
                   \end{array}$$
                   hence $d'^2=0.$ In the same way we have $d''^2 U =d^2 U=0.$
  \end{proof}

Identify a  matrix current $U\in D{'}_{p,q}(M,\End(\C^r))$ with a matrix $(U_{\al\bt})_{r\times r}$ of currents. For any
 $\xi\in\dfg$ and $\eta\in\dst$ with $f+r\leq n-p$ and $g+t\leq n-q,$
  define $$U(\xi,\eta)=\xi^t U \eta,$$
  then $U(\xi,\eta)\in D'^k(M,\Lambda^{a,b}T^*_M)$ is a current of $k$-order and bidegree $(a,b)$ with $f+r+a= n-p$ and $g+t+b= n-q.$

   \begin{definition}\label{matrix} A Hermitian matrix $(p,p)$-current $H\in {D'}^k_{p,p}(M, \Her(\C^r))$  is called positive if for any non zero  $\xi\in {C^k_0}(M,\C^n)$  the  current $H(\xi,\xi)$ is a positive $(p,p)$-current; which means
   for every choice of smooth $(1,0)$-forms $\al_1,\cdots,\al_p$ on $M$ the distribution
   $$H(\xi,\xi)\wedge i\al_1 \wedge\bar{\al}_1\wedge\cdots\wedge i\alpha_p\wedge\bar{\al}_p$$
   is a positive measure on $M.$ We denote the set of positive Hermitian matrix currents of $k$-order by  ${D'}^k_{p,p}(M, \Her^+ (\C^r)).$
\end{definition}

\begin{example} {\rm Let $\delta (z_j)$ denote the Dirac delta functions and
 \begin{eqnarray}
  H=\left(
    \begin{array}{cc}
    i(|z_1|^2+1)\delta(z_1) dz_1\wedge d\bar{z}_1 & z_1 \delta(z_1)\delta(z_2)dz_1\wedge dz_2\\
    \bar{z}_1 \delta(\bar{z}_1)\delta(\bar{z}_2) d\bar{z}_1\wedge d\bar{z}_2&    i(|z_2|^2+1)\delta(z_2)dz_2\wedge d\bar{z}_2 \\
        \end{array}
    \right)
    \end{eqnarray}
    Then $H\in  D{'}^0_{1,1}(\C^2,\Her^+ (\C^2))$ is a positive $(1,1)$-Hermitian matrix current of zero order. In fact,  for any $\xi=(\xi_1,\xi_2)^t\in {C^0_0}(\C^2,\C^2)$
     $$H(\xi,\xi)=|\xi_1 (0,z_2)|^2 idz_1\wedge d\bar{z}_1+|\xi_2 (z_1,0)|^2 idz_2\wedge d\bar{z}_2$$
     is a continuous  $(1,1)$-form. For any smooth $(1,0)$-form $\al=\al_1 dz_1 +\al_2dz_2$ we know
     $$H(\xi,\xi)\wedge (i\al\wedge \bar{\al})=(|\xi_1 (0,z_2)|^2|\al_2|^2+ |\xi_2 (z_1,0)|^2|\al_1|^2) idz_1\wedge d\bar{z}_1\wedge idz_2\wedge d\bar{z}_2$$
     is a positive measure.}
\end{example}

\section{Existence theorem for $\bar{\partial}$-equations and notions of nef vector bundles}\label{subs3}
 \subsection{Hermitian vector bundles} \label{hvb}

Let $M$ be a compact K\"ahler manifold of dimension $n$ with a K\"ahler form
 $\omega,$ and let $E$ be a holomorphic vector bundle of rank $r$
on $M$ with a smooth Hermitian metric $H$.
Let $(E^*, H^*)$ be the dual vector bundle.
Let $C^{\infty}(M, \pqme)$ be the space of $E$-valued smooth $(p,q)$-forms,
and ${C^k_0} (M, \pqme)$ be the space of $E$-valued smooth $(p,q)$-forms
with compact support.
Let $* : C^{\infty}(M, \pqme) \rw C^{\infty}(M,\Lambda^{n-q,n-p}T^*_M\otimes E)$ be the Hodge star-operator
with respect to $\omega$.
For any $f \in C^{\infty}(M, \pqme)$ and $g \in C^{\infty}(M,\Lambda^{a,b}T^*_M\otimes E)$,
we define $f \wedge H \bar{g} \in C^{\infty}(M,\Lambda^{p+b,q+a}T^*_M\otimes E)$ as following.
We take a local trivialization of $E$ on an open subset $U \subset X$,
and we regard $f = (f_1, \ldots, f_r)^t$ as a row vector
with $(p,q)$-forms $f_j$ on $U$.
The Hermitian metric $H$ is then a matrix valued function
$H = (h_{j\bar{k}})$ on $U$.
We define $f \wedge H \bar{g}$ locally on $U$ by
$$
	f^t\wedge H\bar{ g}=  \sum_{j,k} f_j \wedge h_{j\bar{k}} \bar{g}_k
	\in C^{\infty}(M,\Lambda^{p+b,q+a}T^*_M).
$$
In this manner, we can define anti-linear isomorphisms
$$\sharp_H : C^{\infty}(M,\Lambda^{p,q}T^*_M\otimes E) \rw C^{\infty}(M,\Lambda^{q,p}T^*_M\otimes E^*)$$ by
$\sharp_H u = H\bar{u}$, and
\begin{equation}\bar *_H = \sharp_H \circ * : C^{\infty}(M,\Lambda^{p,q}T^*_M\otimes E)\rw C^{\infty}(M,\Lambda^{n-p,n-q}T^*_M\otimes E^*)\label{hodgst1}\end{equation}
 by
$\bar *_H f = H\overline{*f}$.
There is a point-wise inner product on $C^{\infty}(M,\Lambda^{p,q}T^*_M\otimes E)$  defined by $\langle f,g\rangle =f\wedge \bar*_H g$ and
$|u|^2_H=\langle f,f\rangle.$ It induces an inner product and a  norm on $C^{\infty}(M,\Lambda^{p,q}T^*_M\otimes E)$  defined by
\begin{equation}(f, g)_H = \int_M \langle f,g\rangle ,\hskip 1cm~~~~~~~\Vert f\Vert^2_H = \int_M |f|^2_H.\label{hodgst2}\end{equation}
If the metric $\omega$ is complete we get a Hilbert space $L^2(M,\pqme),$ containing the space $C^{\infty}_0 (M,\pqme)$ as a dense subspace.  Denote by $D = D' + D''$ the Chern connection with $D''=d''.$ The Chern curvature form
\begin{equation}\Theta_H =id''(H^{-1}d'H) =id'(d''H^{-1})H)\label{curvature}\end{equation} is  smooth Hermitian matrix from.
The Hermitian vector bundle $(E, H)$ is said to be
{\it Nakano semi-positive} (resp.\ {\it Nakano positive}),
if the $\End (E)$-valued real $(1,1)$-from $i\Theta_H$
is positive semi-definite (resp.\ positive definite) quadratic form
on each fiber of the vector bundle $T_M \otimes E$.

We define ${D''}^* : C^{\infty}(M,\Lambda^{p,q}T^*_M\otimes E)\rw  C^{\infty}(M,\Lambda^{p,q-1}T^*_M\otimes E)$  by
${D''}^* = - * D' * = - \bar *_{H^*}D'' \bar *_H$, which is
the formal adjoint operator of $D'' : C^{\infty}(M,\Lambda^{p,q}T^*_M\otimes E) \rw C^{\infty}(M,\Lambda^{p,q+1}T^*_M\otimes E)$
with respect to the inner product $(\ ,\ )$. In the same way we denote ${D'}^*$ the formal adjoint operator of $D'$ with respect to the inner product $(\ ,\ )$.
We denote by $e(\al)$ the left exterior product acting on
$C^{\infty}(M,\Lambda^{p,q}T^*_M\otimes E)$ by a form $\al \in C^{\infty}(M,\Lambda^{a,b}T^*_M)$.
Then the adjoint operator $e(\al)^*$ with respect to
the inner product $(\ ,\ )_H$ is defined by
$e(\al)^* = (-1)^{(p+q)(a+b+1)}* e(\bar{\al})*$. For instance we set $\Lambda = e(\omega)^*$. In the following write $e(\al)$ and $e(\al)^*$ simply as $\al$ and $\al^*$ for brevity if without special explanations.

\begin{proposition}\label{weight}
If we change the metric $H$ by multiplying a positive function $\eta,$  the new adjoint operators of $\partial$  and $\bar{\partial}$ with respect to the new metric $H\eta,$ denoted by ${D'}^*_{\eta}$ and $\bar{\partial}^*_{\eta},$ are related to  ${D'}^*$ and ${D''}^*$ in the following way:
$${D'}^*_{\eta} ={D'}^*-\frac{1}{\eta} (d' \eta)^*,\hskip 1cm {D''}^*_{\eta} ={D''}^*-\frac{1}{\eta}(d''\eta)^*.$$
\end{proposition}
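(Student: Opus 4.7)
The rescaling $H\mapsto H\eta$ enters the formalism only through the Hodge $\bar *$-operator. Since $\sharp_{H\eta}u = (H\eta)\bar u = \eta\,\sharp_H u$, the definition $\bar *_H = \sharp_H\circ *$ immediately gives $\bar *_{H\eta} = \eta\,\bar *_H$, and hence the pointwise inner products scale as $\langle\cdot,\cdot\rangle_{H\eta} = \eta\langle\cdot,\cdot\rangle_H$. The whole plan is to carry this scalar factor $\eta$ through an ordinary integration by parts (at the unchanged metric $H$) and then reabsorb it into $\bar *_{H\eta}$ on the other side.

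For ${D''}^*_\eta$, I would take compactly supported smooth $f$ and $g$ of bidegrees $(p,q-1)$ and $(p,q)$ respectively, and start from $({D''}f,g)_{H\eta} = \int_M \eta\,({D''}f)\wedge\bar *_H g$. The scalar Leibniz rule ${D''}(\eta f) = d''\eta\wedge f + \eta\,{D''}f$ lets me rewrite this expression as
\[
({D''}(\eta f),g)_H \;-\; \int_M d''\eta\wedge f\wedge\bar *_H g .
\]
The first piece equals $(\eta f,{D''}^* g)_H$ by the ordinary $H$-adjointness of ${D''}^*$, and the second equals $(f,(d''\eta)^* g)_H$ by the very definition of $e(d''\eta)^*$. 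Reassembling the two terms with the identity $\bar *_{H\eta}=\eta\,\bar *_H$ yields $({D''}f,g)_{H\eta} = (f,{D''}^* g - \eta^{-1}(d''\eta)^* g)_{H\eta}$, and uniqueness of the formal adjoint produces the second formula of the proposition.

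The argument for ${D'}^*_\eta$ is formally identical. The only new ingredient is the Leibniz rule ${D'}(\eta u) = d'\eta\wedge u + \eta\,{D'}u$, which holds because $\eta$ is a scalar function and ${D'}$ is a derivation over the algebra of smooth scalar forms. The same three-step bookkeeping — factor out $\eta$, integrate by parts at the unchanged metric $H$ to land on pairings with ${D'}^* v$ and $(d'\eta)^* v$, and then reabsorb $\eta$ into $\bar *_{H\eta}$ — produces ${D'}^*_\eta = {D'}^* - \eta^{-1}(d'\eta)^*$.

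I do not anticipate any serious obstacle. The only points that deserve care are conventional: (i) ${D'}$ and ${D''}$ in the proposition are read as the fixed Chern-connection operators of the original pair $(E,H)$, while only the inner product is perturbed in passing to ${D'}^*_\eta$ and ${D''}^*_\eta$, so no issue arises from the fact that the Chern connection of $(E,H\eta)$ itself differs from that of $(E,H)$ by $\eta^{-1}d'\eta\wedge\cdot$; and (ii) the symbols $(d'\eta)^*$ and $(d''\eta)^*$ are unambiguous because, for a scalar form $\alpha$, the adjoint $e(\alpha)^*$ is built solely from the scalar Hodge star $*$ and therefore depends on the K\"ahler metric $\omega$ but not on the bundle metric. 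Hence the same notation legitimately appears on both sides of each identity.
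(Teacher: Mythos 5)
Your argument is correct, but it takes a genuinely different route from the paper's. You verify the defining property of the formal adjoint by integration against a test section: you expand $(D''f,g)_{H\eta}=\int_M \eta\, D''f\wedge \bar*_Hg$ with the Leibniz rule $D''(\eta f)=d''\eta\wedge f+\eta D''f$, use $H$-adjointness of $D''$ and the definition of $e(d''\eta)^*$, and then reabsorb $\eta$ via $\bar*_{H\eta}=\eta\bar*_H$. The paper instead works entirely pointwise on the closed-form Hodge-star expression ${D'}^*_{\eta}f=-\bar*_{(H\eta)^*}D'(\bar*_{H\eta}f)$: it substitutes $\bar*_{H\eta}=\eta\bar*_H$ and $\bar*_{(H\eta)^*}=\eta^{-1}\bar*_{H^*}$, applies Leibniz to $D'(\eta\bar*_H f)$, and reads off the extra term. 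The two proofs are essentially ``the same identity, integrated versus unintegrated'': the paper's has the advantage of being pointwise and brief (no compact supports, no density argument needed), but it presupposes the formula ${D'}^*=-\bar*_{H^*}D'\bar*_H$ and the identification $\bar*_{H^*}e(d'\eta)\bar*_H=(d'\eta)^*$, while yours needs only the defining adjunction property and the Leibniz rule and so is somewhat more robust to sign-convention bookkeeping. Your remark (ii), that $e(\alpha)^*$ for scalar $\alpha$ is built from the base Hodge star alone and so is metric-on-$E$ independent, is exactly the observation needed to make the statement of the proposition well posed, and it is implicit in the paper's use of the formula $e(\alpha)^*=(-1)^{(p+q)(a+b+1)}*e(\bar\alpha)*$.
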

\begin{proof}
For $f\in C^{\infty}(M,\Lambda^{p,q}T^*_M\otimes E),$
$${D'}^*_{\eta}f=- \bar *_{{H\eta}^*}
D'( \bar *_{H\eta} f)=- (\frac{1}{\eta}\bar *_{{H}^*}){D'}( \eta\bar *_{H} f)=\bar *_{H^*}{D'} \bar *_H f-\frac{1}{\eta}\bar *_{H^*}d'\eta \bar *_H,$$
hence we get the first equality  and the second follows the same way.
\end{proof}

\subsection{Twisted Bochner-Kodaira-Nakano identity}

The following Bochner-Kodaira-Nakano identity is basic in  studying  vanishing theorem and $L^2$-estimates in K\"ahler geometry, we may find its proof in \cite{siu01},\cite{siu11},\cite{debo}. 
\begin{align}
\Vert D'' f\Vert_H ^2 +\Vert {D''}^* f\Vert_H^2=\Vert D'f \Vert_H ^2 +\Vert {D'}^* f \Vert_H^2  +( [ \Theta_H,\Lambda] f,  f)_H.\end{align}
Ohsawa and Takegoshi  obtained a twisted form of it in \cite{ot}, where it plays a critical role in establishing the extension theorem with their name.  In the following we will give a short proof of the twisted Bochner-Kodaira-Nakano identity from the point of view of 
deforming the Hermitian metric. 

\begin{proposition}\label{kno} Let  $M$ be a K\"ahler manifold  with a complete  K\"ahler metric $\omega$ and $E$ be a Hermitian holomorphic vector bundle equipped with smooth Hermitian metric $H.$ Let $\eta,\lambda$ be smooth positive function on $M.$ Then for any $f\in\lnqe,$
\begin{align}
\Vert (\eta +\lambda^{-1})^{\frac{1}{2}}D''^* f\Vert_H ^2 +\Vert\eta^{\frac{1}{2}}D''f\Vert_H^2 \geq ( [ \eta\sta -id'd''\eta-i\lmd d'\eta\wedge d''\eta,\Lm] f,  f)_H.\label{kodait} \end{align}
\end{proposition}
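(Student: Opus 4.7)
The plan is to apply the classical (untwisted) Bochner--Kodaira--Nakano identity, stated immediately above the proposition, to the \emph{conformally rescaled} Hermitian metric $\wh=\eta H$ on $E,$ and then to combine this with Proposition~\ref{weight} and an elementary pointwise Cauchy--Schwarz step in order to introduce the free parameter $\lmd.$ The guiding observation is that the untwisted identity for $\wh$ is already a \emph{twisted} identity for $H,$ since $\eta$ enters both through the norms and through the new Chern curvature, which a direct computation from (\ref{curvature}) gives as
$$
\Theta_{\wh}=\sta-id'd''\log\eta\cdot\mathrm{id}_{E}=\sta-\frac{i}{\eta}d'd''\eta+\frac{i}{\eta^{2}}d'\eta\wedge d''\eta\cdot\mathrm{id}_{E}.
$$

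I would first apply BKN for $(\omega,\wh)$ to $f\in\dnqie;$ since $D'f=0$ on $(n,q)$-forms, dropping the nonnegative term $\Vert{D'}^{*}_{\eta}f\Vert^{2}_{\wh}$ yields $\Vert D''f\Vert^{2}_{\wh}+\Vert{D''}^{*}_{\eta}f\Vert^{2}_{\wh}\ge([\Theta_{\wh},\Lm]f,f)_{\wh}.$ Converting via $\Vert\cdot\Vert^{2}_{\wh}=\Vert\eta^{1/2}\cdot\Vert^{2}_{H}$ and $([T,\Lm]f,f)_{\wh}=([\eta T,\Lm]f,f)_{H},$ and substituting ${D''}^{*}_{\eta}={D''}^{*}-\eta^{-1}(d''\eta)^{*}$ from Proposition~\ref{weight}, the inequality becomes
$$
\Vert\eta^{1/2}D''f\Vert^{2}_{H}+\bigl\Vert\eta^{1/2}{D''}^{*}f-\eta^{-1/2}(d''\eta)^{*}f\bigr\Vert^{2}_{H}\ge\bigl([\eta\sta-id'd''\eta+\tfrac{i}{\eta}d'\eta\wedge d''\eta,\Lm]f,f\bigr)_{H}.
$$

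Next I would expand the second square on the left and control the cross term by the pointwise Cauchy--Schwarz inequality
$$
2\bigl|\langle{D''}^{*}f,(d''\eta)^{*}f\rangle_{H}\bigr|\le\lmd^{-1}|{D''}^{*}f|^{2}_{H}+\lmd\,|(d''\eta)^{*}f|^{2}_{H},
$$
which gives $\bigl\Vert\eta^{1/2}{D''}^{*}f-\eta^{-1/2}(d''\eta)^{*}f\bigr\Vert^{2}_{H}\le\Vert(\eta+\lmd^{-1})^{1/2}{D''}^{*}f\Vert^{2}_{H}+\int_{M}(\lmd+\eta^{-1})\,|(d''\eta)^{*}f|^{2}_{H}.$ Moving the residual integral to the right-hand side and invoking the pointwise K\"ahler identity
$$
\bigl\langle[id'\eta\wedge d''\eta,\Lm]f,f\bigr\rangle_{H}=|(d''\eta)^{*}f|^{2}_{H},
$$
which holds on $(n,q)$-forms because $d'\eta\wedge f$ vanishes in top holomorphic degree, the correction $(\eta^{-1}+\lmd)\,|(d''\eta)^{*}f|^{2}_{H}$ precisely replaces the unwanted coefficient $+\tfrac{i}{\eta}$ of $d'\eta\wedge d''\eta$ produced by $\Theta_{\wh}$ by the desired $-i\lmd,$ yielding (\ref{kodait}).

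The main obstacle is this final step: verifying the K\"ahler identity $\langle[id'\eta\wedge d''\eta,\Lm]f,f\rangle_{H}=|(d''\eta)^{*}f|^{2}_{H}$ on $(n,q)$-forms and then tracking all signs and factors of $i$ so that the Cauchy--Schwarz correction cancels exactly the residual curvature term $+\tfrac{i}{\eta}d'\eta\wedge d''\eta$ coming from the metric deformation. An alternative route would be to derive (\ref{kodait}) from commutator manipulations involving $[D'',\eta{D''}^{*}]$ directly, but the metric-deformation viewpoint is the one the author has announced and is the cleanest bookkeeping route once the auxiliary K\"ahler identity is in hand.
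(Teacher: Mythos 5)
Your proof is correct and takes essentially the same route as the paper: apply the untwisted Bochner--Kodaira--Nakano identity to the conformally rescaled metric $\eta H,$ drop the nonnegative $\Vert{D'}^{*}_{\eta}f\Vert^2$ term (using $D'f=0$ on $(n,q)$-forms), rewrite ${D''}^{*}_{\eta}$ via Proposition~\ref{weight}, invoke the same pointwise identity $\langle[id'\eta\wedge d''\eta,\Lm]f,f\rangle_H=|(d''\eta)^*f|^2_H$ on $(n,q)$-forms, and finish with the Cauchy--Schwarz step introducing $\lmd.$ The only cosmetic difference is that you discard $\Vert D'^*_\eta f\Vert^2_{\wh}$ immediately and expand the square, whereas the paper keeps the identity exact until equation (\ref{cur5}) and discards the same term at the end; the algebra is otherwise identical.
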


\begin{proof} Since $\dnqie$ is dense in $\lnqe,$ it suffices to prove (\ref{kodait}) for $u\in\dnqie.$ Change the metric by $H\eta$  and note $D'f=0$ when $f\in\dnqie$ we get  from the  Bochner-Kodaira identity the following
\begin{align}
\Vert D''f\Vert_{H\eta}^2 +\Vert(D''^*_{\eta}f\Vert_{H\eta}^2=\Vert(D'_{\eta})^*f\Vert_{H\eta}^2 +([\Theta_{H\eta},\Lambda]f,f)_{H\eta}.\label{cur1}
\end{align}
By Proposition \ref{weight},
 \begin{align}
\Vert D''^*_{\eta}f\Vert_{H\eta}^2 =\Vert\eta^{\frac{1}{2}}D''^* f\Vert_{H}^2 +\Vert \eta^{-\frac{1}{2}}(d''{\eta})^* f\Vert_{H}^2
-2{\mathfrak{Re}} (D''^* f, (d''{\eta})^* f)_H.\label{cur2}
\end{align}
Since $i[d''{\eta},\Lambda]=(d'\eta)^*$ and  $i[d'{\eta},\Lambda]=(d''\eta)^*,$ we have $(d''\eta )(d''{\eta})^*-(d'\eta)^*(d'{\eta})=i(d''\eta )[d'{\eta},\Lambda]+i[d''{\eta},\Lambda](d'{\eta})=[id'\eta\wedge d''\eta,\Lambda]$ and hence for $u\in\lnqe,$
 \begin{align}
\Vert (d''{\eta})^* f\Vert_{H}^2 =([id'\eta\wedge d''\eta,\Lambda]f,f)_H.\label{cur3}
\end{align}
Since $\Theta_{H\eta}=d''(\eta^{-1}H^{-1}d'(\eta H))=\Theta_H +i\eta^{-2}d'\eta\wedge d''\eta-i\eta^{-1}d'd''\eta$  we have
\begin{align}
-\Vert \eta^{-\frac{1}{2}}(d''{\eta})^* f\Vert_{H}^2+([\Theta_{H\eta},\Lambda]f,f)_{H\eta} =([\eta \Theta_H -id'd''\eta,\Lambda]f,f)_H.\label{cur4}
\end{align}
By (\ref{cur1})-(\ref{cur4}) we have

\begin{eqnarray}\begin{array}{rcl}
\Vert\eta^{\frac{1}{2}}D'' f\Vert_{H}^2 +\Vert\eta^{\frac{1}{2}}D''^* f\Vert_{H}^2&=&([\eta\Theta_H -id'd''\eta,\Lambda]f,f)_H \\
&&+\Vert D'^* f\Vert_{H\eta}^2
-2{\mathfrak{Re}} (D''^* f, (d''{\eta})^* f)_H.
\end{array}\label{cur5}\end{eqnarray}
By Cauchy Schwartz inequality we have
\begin{align}
-2{\mathfrak{Re}} (D''^* f, (d''{\eta})^* f)_H\geq
-\Vert\lambda^{-\frac{1}{2}}D''^* f\Vert_{H}^2 -\Vert \lambda^{\frac{1}{2}}(d''{\eta})^* f\Vert_{H}^2.\label{cur7}
\end{align}
Note  $\Vert\eta^{\frac{1}{2}}D''^* f\Vert_{H}^2 + \Vert\lambda^{-\frac{1}{2}}D''^* f\Vert_{H}^2 =\Vert (\eta+\lambda^{-1})^{\frac{1}{2}}D''^* f\Vert_{H}^2 $
and by (\ref{cur3}) we have  $ \Vert \lambda^{\frac{1}{2}}(d''{\eta})^* f\Vert_{H}^2 = ([i\lambda d'\eta\wedge d''\eta,\Lambda]f,f)_H.$ From these together
with (\ref{cur5}) and
 (\ref{cur7}) we conclude (\ref{kodait}).
\end{proof}

\subsection{Existence thorem for $\bar{\partial}$-equation}

We will use the  $L^2$ existence theorem for solving $\bar{\partial}$-equations.  Ohsawa and Takegoshi  \cite{ot}  established their extension theorem via solving the twisted form of $\bar{\partial}$-equations, and siu \cite{siu11} gave an elegant explanation  of  by a commutating diagram (page 1780 of \cite{siu11}). Let $Tf={D''}((\eta +\lambda^{-1})^{\frac{1}{2}}f)$ and $Sf={\eta}^{\frac{1}{2}}({D''}f)$ be the twisted $\bar{\partial}$ operators acting on $E$-valued differential form $f.$ Assume for some specific choice of the function $\lambda$ and $\eta$ we have
 \begin{equation} (\eta\sta -id'd''\eta-i\lmd d'\eta\wedge d''\eta,\Lm]f,f )_H \geq \Vert \sqrt{\gamma}({d''}\zeta)^*f\Vert^2_H-\delta\Vert f\Vert^2_H\label{delta}\end{equation}
 on $M,$ where $\zeta$ is a non zero smooth function, $\gamma$ is a positive function and $\delta\geq 0$ is a real number. Throughout this paper  the norms of the elements in $L^2(M,\Lambda^{n,k}T^*_M\otimes E)$ for $0\leq k\leq m$ are denoted by the same signal $||\cdot ||_H.$
  Then by (\ref{kodait}) we have
 $$||T^*f||^2_H+||Sf||^2_H+\delta ||f||^2_H\geq \Vert \sqrt{\gamma}({d''}\zeta)^*f\Vert^2_H.$$
 Assume $g, g_1\in W_1:=\lnqe$ are $(n,q)$ form with $Sg=0,$ and $g={d''}\zeta\wedge g_0 +g_1$ with $g_0\in W_2:=L^2(M,\Lambda^{n,q-1}T^*_M\otimes E).$  We want solve the equation $Tf=g.$ 
 In this paper we need to solve the equation in following two cases. 
 
 \subsubsection{\bf Case 1}  In this case $g_1=0$ and $\delta=0.$ For any $u\in W_1,$ write $u=u_1 +u_2$ with ${D''}u_1 =0$ and $u_2$ is in the orthogonal complement space of the kernel of ${D''}.$ Then $(g,u)_H=(g,u_1)_H$ and we get
  $$(g,u)_H =(({d''}\zeta)^*u,g_0)_H \leq ||T^*u||^2_H \cdot ||\frac{1}{\sqrt{\gamma}}g_0||^2_H,$$ 
 which means the functional $T^*u\rw \langle g,u\rangle$ defined on the Hilbert space $W_2$ is continuous, and hence by Riez Representation Theorem 
 the equation ${D''}\tilde{f}=g$ with $\tilde{f}=  (\eta +\lambda^{-1})^{\frac{1}{2}}f$ has a solution with estimate 
  $$\int_M (\eta +\lambda^{-1})^{-1}|\tilde{f}|^2_H\leq 2 \int_M  \frac{1}{\sqrt{\gamma}}|g_0|^2_H.$$

 \subsubsection{\bf Case 2}  The second case we will need is to solve the approximate equation by taking  $\delta \rw 0.$ So we assume $\delta>0.$ Furthermore  $g_1$ is not necessary assumed to be zero. In this case
 $$(g,u)_H =(({d''}\zeta)^*u,g_0)_H +(g_1,u)_H,$$
 by Cauchy-Schwartz inequality we have 
 $$\frac{1}{2}|(g,u)_H |^2\leq ||\sqrt{\gamma}({d''}\zeta)^*u||^2_H \cdot ||\frac{1}{\sqrt{\gamma}}g_0||^2_H +\delta ||u||^2_H \cdot \frac{1}{\delta}||g_1||^2_H.$$
Again we use the trick to write $u=u_1 +u_2\in W_1$ with ${D''}u_1 =0$ and $u_2$ is in the orthogonal complement space of the kernel of ${D''}.$ Then  we get
 $$\frac{1}{2}|(g,u)_H |^2\leq C(g,\gamma,\delta) (||T^* u||^2_H +\delta ||u||^2_H)$$
  with $C(g,\gamma,\delta)=||\frac{1}{\sqrt{\gamma}}g_0||^2_H +\frac{1}{\delta}||g_1||^2_H.$
  Define a function on the Hilbert space sum $W_1\oplus W_2$ by
  $$(T^*u,\sqrt{\delta}u)\rw \langle g,u\rangle,$$
  then the inequality we got means it is a continuous functional, hence using Riez Representation Theorem we can solve the approximate $\bar{\partial}$ equation
     $$D''\tilde{f}+\sqrt{\delta}h=\bar{\partial}\zeta\wedge g_0 +g_1$$ together with estimate
   $$\int_M (\eta +\lambda^{-1})^{-1}|\tilde{f}|^2_H +\int_M  |h|_H^2 \leq 2 \Big(\int_M  \frac{1}{\sqrt{\gamma}}|g_0|^2_H +\frac{1}{\delta}\int_M |g_1|^2_H\Big).$$ 
 As a summery we have the following existence theorem:

\begin{proposition}\label{exist}
 Let $M$ as in Proposition \ref{kno}, assume  (\ref{delta}) holds for any $f\in\lnqe.$ Then {\rm (i)}. If $\delta=0,$ then  for $g={d''}\zeta\wedge g_0 \in\lnqe$ such that $D''g=0$ and  $\int_M  \frac{1}{\sqrt{\gamma}}|g_0|^2_H<+\infty,$ there exits $f \in L^2 (M,\bigwedge^{n,q-1}T^*_M\otimes E)$ such that $D''f=g$ and
 $$\int_M (\eta +\lambda^{-1})^{-1}|{f}|^2_H\leq 2 \int_M  \frac{1}{\sqrt{\gamma}}|g_0|^2_H.$$ 
 {\rm (ii)}. If $\delta>0$, then for any $g={d''}\zeta\wedge g_0 +g_1\in\lnqe$ such that $D''g=0$ and  $\int_M  \frac{1}{\sqrt{\gamma}}|g_0|^2_H +\frac{1}{\delta}\int_M |g_1|^2_H <+\infty,$ there exits $f \in L^2 (M,\bigwedge^{n,q-1}T^*_M\otimes E)$  and  $h\in L^2 (M,\bigwedge^{n,q}T^*_M\otimes E)$
 such that $D''f+\sqrt{\delta}h=\bar{\partial}\zeta\wedge g_0 +g_1$ and
$$\int_M (\eta +\lambda^{-1})^{-1}|{f}|^2_H +\int_M  |h|_H^2 \leq 2 \Big(\int_M  \frac{1}{\sqrt{\gamma}}|g_0|^2_H +\frac{1}{\delta}\int_M |g_1|^2_H\Big).$$ \end{proposition}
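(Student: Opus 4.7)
The plan is to view Proposition \ref{exist} as a standard H\"ormander-type duality argument, with Proposition \ref{kno} combined with the hypothesis (\ref{delta}) providing the only analytic input. Introduce the twisted operators $Tf=D''((\eta+\lambda^{-1})^{1/2}f)$ and $Sf=\eta^{1/2}D''f$; substituting (\ref{delta}) into (\ref{kodait}) immediately yields the master inequality
\[
\|T^*u\|_H^2+\|Su\|_H^2+\delta\|u\|_H^2 \;\geq\; \|\sqrt{\gamma}(d''\zeta)^*u\|_H^2
\]
for every $u$ in $\mathrm{Dom}(T^*)$, with completeness of $\omega$ (inherited from Proposition \ref{kno}) ensuring that smooth compactly supported forms are dense in this domain.

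For part~(i), since $D''g=0$, I would decompose every test form as $u=u_1+u_2$ with $u_1\in\ker D''$ and $u_2\in(\ker D'')^{\perp}$. The key observation is that $\mathrm{Range}(T)\subset\ker D''$ (because $D''^{\,2}=0$), hence $u_2\in\ker T^*$ and therefore $T^*u=T^*u_1$ while $(g,u)_H=(g,u_1)_H$. On $u_1$ the master inequality collapses to $\|\sqrt{\gamma}(d''\zeta)^*u_1\|_H\leq\|T^*u\|_H$, and expanding $g=d''\zeta\wedge g_0$ through the adjoint identity followed by Cauchy--Schwarz gives
\[
|(g,u)_H|\;\leq\;\|\gamma^{-1/2}g_0\|_H\cdot\|T^*u\|_H .
\]
The linear form $T^*u\mapsto(g,u)_H$ is therefore well defined and bounded on $T^*(\mathrm{Dom}(T^*))$; a Hahn--Banach extension followed by Riesz representation supplies $\tilde f\in W_2$ with $T\tilde f=g$ weakly and $\|\tilde f\|_H^2\leq\|\gamma^{-1/2}g_0\|_H^2$. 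Unpacking, $f=(\eta+\lambda^{-1})^{1/2}\tilde f$ solves $D''f=g$ and satisfies the claimed weighted estimate.

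Part~(ii) follows the same recipe on the product Hilbert space $W_2\oplus W_1$, to accommodate both the solution $\tilde f$ and the approximation error $h$. Splitting $(g,u)_H=(g_0,(d''\zeta)^*u_1)_H+(g_1,u_1)_H$, applying Cauchy--Schwarz to each summand, and using $(a+b)^2\leq 2(a^2+b^2)$ produces
\[
\tfrac{1}{2}|(g,u)_H|^2 \;\leq\; \bigl(\|\gamma^{-1/2}g_0\|_H^2+\delta^{-1}\|g_1\|_H^2\bigr)\bigl(\|T^*u\|_H^2+\delta\|u\|_H^2\bigr),
\]
so that $(T^*u,\sqrt{\delta}\,u)\mapsto(g,u)_H$ is a continuous functional on a subspace of $W_2\oplus W_1$; Riesz representation then delivers the pair $(\tilde f,h)$ realizing $T\tilde f+\sqrt{\delta}h=g$, and rescaling by $(\eta+\lambda^{-1})^{1/2}$ finishes the proof.

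The main conceptual subtlety I would have to verify is the well-definedness of the Riesz functional, namely that $(g,u)_H$ really depends only on $T^*u$ (respectively on the pair $(T^*u,\sqrt{\delta}\,u)$); this is in fact automatic from the derived estimates, since vanishing of $T^*u$ (respectively both components) forces the right-hand side to vanish. The remaining technical nuisance of passing between the formal domain of $T^*$ and its Hilbert-space closure is handled by the standard Andreotti--Vesentini density argument, which is available precisely because $\omega$ is complete.
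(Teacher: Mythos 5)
Your proposal is correct and follows essentially the same route as the paper's own argument: the same twisted operators $T$ and $S$, the same decomposition $u=u_1+u_2$ into $\ker D''$ and its orthogonal complement, the same Cauchy--Schwarz manipulation, and the same Riesz-representation finish (the paper states these as ``Case 1'' and ``Case 2'' just before the proposition). Your explicit justification that $T^*u_2=0$ via $\operatorname{Range}(T)\subset\ker D''$ is a point the paper leaves tacit, and is a welcome clarification rather than a deviation.
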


\subsection{Singular Hermitian metric on holomorphic vector bundle}

Let $M$  be a complex manifold and $E$ a holomorphic vector bundle of rank $r$ on $M.$ For any $U\in D{'}^k_{p,q}(M,\End(E))$ and any   continuous and differentiable of $k$-order  matrix function $V\in C^k (M,\End(E)),$ we define the product
 $VU:{C^k_0}(M,E)\times {C^k_0}(M,E)\rightarrow D'_{p,q}(M)$ by ($VU)(\xi,\eta)=U(\xi,V\eta).$ In the same way we define $(UV)(\xi,\eta)=U(V\xi,\eta).$
 Let $U_k,U\in D{'}^k_{p,q}(M,\End(E))$ be matrix currents. The sequence $U_k$ is called weak*-convergent to $U$ if for any  $\xi,\eta\in {C^k_0}(M,E),$ the $(p,q)$-currents $U_k (\xi,\eta)$ is weak*-convergent to $U(\xi,\eta).$

 In this paper, we restrict to study the following type of singular Hermitian metric on the holomorphic vector bundles:
\begin{definition}\label{singdef} A {\it singular Hermitian metric of type I} on $E$ we means a measurable Hermitian
metric $H$ such that $d'H\in  D{'}^1_{1,0}(M,\End(E))$  and $H^{-1}\in C^0(M,\End(E))$ is a metric with continuous coefficients.
A {\it singular Hermitian metric of type II} on $E$ we means a continuous Hermitian
metric $H$ such that  $H^{-1}$ is measurable and
 $d''H^{-1}\in  D{'}^1_{0,1}(M,\End(E)).$ 
\end{definition}

By formula (\ref{curvature}), if $H$ and $H^{-1}$ are smooth then  $id''(H^{-1}d'H)=id'(d''H^{-1})H).$ 
For singular Hermitian metrics, the Chern connection is not necessary smooth and Chern curvature forms
 $id''(H^{-1}d'H)$  and $id'(d''H^{-1})H)$ are only meaningful in the sense of distribution.  For singular Hermitian metric of type I since $H^{-1}d'H$ is a well defined  matrix current, and  its weak derivative $d''(H^{-1}d'H)$ is also well defined. For singular Hermitian metric of type II the curvature is also well defined in the sense of distribution.
  We call   $\Theta_H=id''(H^{-1}d'H)$ (reap. $\Theta_H=id'(d''H^{-1})H)$) the {\it curvature currents} of the singular Hermitian metric of type I (reap. type II).

 \begin{proposition} The curvature current $\Theta_H $ is a closed Hermitian matrix current.
 \end{proposition}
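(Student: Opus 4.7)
The plan is to verify the two claims separately—that $\Theta_H$ is $d$-closed and that it is Hermitian symmetric—working throughout in the Type I setting, the Type II case being entirely analogous. Proposition \ref{dq0} will do a lot of the work, since it gives $(d')^2 = (d'')^2 = 0$ on arbitrary matrix currents for free.

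By construction $\Theta_H = i\,d''\omega$ where $\omega := H^{-1}d'H$ is a matrix current of bidegree $(1,0)$; the definition makes sense because $H^{-1}\in C^0(M,\End(E))$ can multiply the first-order current $d'H \in D'^1_{1,0}(M,\End(E))$. Proposition \ref{dq0} therefore yields $d''\Theta_H = i\,(d'')^2\omega = 0$ immediately. For Type II, the dual expression $\Theta_H = i\,d'((d''H^{-1})H)$ gives $d'\Theta_H = 0$ by the same token.

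To establish Hermitian symmetry $\overline{\Theta_H} = \Theta_H^t$, I would first use the Hermitian hypothesis $\overline{H}^t = H$ (and the corresponding identity for $H^{-1}$) to compute $\overline{\omega}^t = (d''H)H^{-1}$ as a matrix current of bidegree $(0,1)$. Since entry-wise conjugation interchanges $d'$ and $d''$, this yields $\overline{\Theta_H}^t = -i\,d'((d''H)H^{-1})$, so Hermitian symmetry reduces to the distributional identity $d''(H^{-1}d'H) = -d'((d''H)H^{-1})$. In the smooth case this is a direct Leibniz-rule computation using $d''d' = -d'd''$ (morally it is the matrix form of the equality $d\,d\log H = 0$); in the singular case I would approximate $H$ by convolution with smooth kernels, verify the identity for the mollified metrics, and pass to the weak-$*$ limit. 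The Type I regularity assumptions are exactly what is needed to make each intermediate matrix current well-defined and to guarantee continuous convergence.

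With Hermitian symmetry in hand, the remaining closedness direction is formal: applying entry-wise conjugation and matrix transposition to $d''\Theta_H = 0$, and invoking that conjugation exchanges $d'$ and $d''$, one obtains $0 = \overline{d''\Theta_H}^t = d'(\overline{\Theta_H}^t) = d'\Theta_H$, so $d\Theta_H = d'\Theta_H + d''\Theta_H = 0$. The main obstacle will be the Leibniz-type distributional identity used in the previous paragraph: because $d''H$ and $H^{-1}$ only have the regularity of an order-one matrix current and a continuous matrix function respectively, the classical product rule is not directly available, and one must check carefully that mollification commutes with the relevant products—this is precisely the setting where the machinery of Hermitian matrix currents developed in Section \ref{subs2} becomes essential.
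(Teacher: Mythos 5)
There is a concrete gap in the second step, and it is serious enough that the whole argument does not go through. The crux of your argument is the proposed Leibniz-type matrix identity
\begin{equation*}
d''(H^{-1}d'H) \;=\; -\,d'\bigl((d''H)H^{-1}\bigr),
\end{equation*}
which you assert holds ``by a direct Leibniz-rule computation'' in the smooth case. For a line bundle ($r=1$) this is true, because all factors commute; but for $r>1$ it fails. To see this quickly, fix a point $p$ with $dH(p)=0$ (but $H(p)$ an arbitrary positive Hermitian matrix, and $d'd''H(p)$ arbitrary). Then the left-hand side reduces to $H^{-1}\,d''d'H$ while the right-hand side reduces to $(d''d'H)\,H^{-1}$, and these differ by the commutator $[H^{-1},\,d''d'H]$, which is nonzero in general. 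Equivalently: the Chern curvature matrix $\Theta$ satisfies the Hermitian-adjoint relation $\overline{\Theta}^{\,t}\,H = H\,\Theta$, \emph{not} the naked matrix relation $\overline{\Theta}^{\,t}=\Theta$; the two coincide only in rank one or in an orthonormal frame where $H=I$. Your heuristic ``morally the matrix form of $dd\log H=0$'' is precisely where the error enters: $H^{-1}dH \neq d\log H$ once $H$ fails to commute with $dH$. Consequently your reduction of Hermitian symmetry to that identity does not prove the statement, and mollification cannot rescue it because the identity is already false for smooth $H$.

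The paper's own argument avoids this pitfall by never isolating such a raw matrix identity. It works throughout at the level of the sesquilinear pairing, performs integration by parts with the metric $H$ explicitly carried along inside the pairing, and shows that both $\overline{\Theta_H}(\xi,\eta)$ and $(\Theta_H)^t(\xi,\eta)$ equal a common third expression $\,i\,d''\bigl(H(d'H^{-1})\bigr)(\eta,\xi)$; the non-commutativity is absorbed by the placement of $H$ in that pairing. This is structurally different from your route.

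One smaller point: your first step applies Proposition~\ref{dq0} to conclude $d''\Theta_H = i(d'')^2(H^{-1}d'H)=0$. Proposition~\ref{dq0} is stated (and proved) for the \emph{bilinear} weak differentials of (\ref{diff1})--(\ref{diff2}); the curvature is a \emph{sesquilinear} current, for which $d'$ and $d''$ are defined by (\ref{diff3})--(\ref{diff4}), which conjugate the second slot. A version of $(d'')^2=0$ does hold for the sesquilinear differentials, but it is a separate (short) verification, not a direct quotation of Proposition~\ref{dq0}. This is a fixable bookkeeping issue, unlike the problem in the second step.
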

 \begin{proof} Here we give proof for the singular Hermitian metric of type I.  The type II case is prove in the same way.
 This is a local property,  it suffices to prove the proposition on any small open set $U$ over which $E$ is a trivial bundle.
 For any $\xi\in{C^{\infty}_0}(U,\Lambda^{f,g}T^*_M|_U\otimes \C^n)$ and
 $\eta\in{C^{\infty}_0}(U,\Lambda^{s,t}T^*_M|_U\otimes \C^n)$ with $f+s=g+t=n-1.$ For simplicity we assume $f=g=0$ and the other cases are checked in the same way.
Then we have
 \begin{align}
{\Theta_H} (\xi,\eta)
=-i H^{-1}d'H(d''{\xi},{\eta})+iH^{-1}d'H({\xi},d'{\eta}).\label{herm0}
\end{align}
Using the definition of weak differential of current we rewrite the R.H.S. of (\ref{herm0}) as
\begin{align}
H^{-1}d'H(d''\bar{\xi},{\eta})=-(d'd''{\xi},{\eta})- H(d''{\xi}, d''(H^{-1}{\eta})),\label{herm1}\\
H^{-1}d'H(\bar{\xi},d'{\eta})=-(d'{\xi},d'{\eta})-H({\xi},d''(H^{-1}d'{\eta})),\label{herm2}
\end{align}
where $(d'd''{\xi},{\eta})=\int_M (d'd''{\xi})^t\wedge\bar {\eta}.$  The second term of R.H.S. of (\ref{herm1}) is
\begin{align}
H(d''{\xi},d(H^{-1}{\eta}))=H({d'\xi},{(d'\bar{H}^{-1})\eta})+(d''{\xi},d''{\eta}).\label{herm3}
\end{align}
The second term of R.H.S. of (\ref{herm2}) is
\begin{align}
H({\xi},d''(H^{-1}d'{\eta}))=H({\xi},{(d'{H}^{-1})d''\eta})+({\xi},d''d'{\eta}).\label{herm4}
\end{align}
By (\ref{herm0}) and (\ref{herm1})-(\ref{herm4}) we get
\begin{align}
\overline{\Theta_H} (\xi,\eta)=i\bar{H}(\overline{d'\xi},\overline{(d'{H}^{-1})\eta})-i\bar{H}(\bar{\xi},\overline{(d'{H}^{-1})d''\eta})
\end{align}
Since $H$ is a Hermitian matrix we have
\begin{align}
\overline{\Theta_H} (\xi,\eta)=i{H}({(d'{H}^{-1})\eta},d'\xi)-i{H}({(d'{H}^{-1})d''\eta},\xi),
\end{align}
hence we have
\begin{align}
\overline{\Theta_H} (\xi,\eta)=id''({H}(d'{H}^{-1}))(\eta,\xi).\label{herm5}
\end{align}
In the same way we may check that
\begin{align}
({\Theta_H})^t(\xi,\eta)=id''({H}(d'{H}^{-1}))(\eta,\xi). \label{herm6}
\end{align}
By (\ref{herm5}) and (\ref{herm6})) we  know $\Theta_H$ is closed and
 $\overline{\Theta_H}= (\Theta_H)^t,$  hence $\Theta_H$ is a closed Hermitian matrix current.
 \end{proof}

 \begin{example}\label{ex1}{\rm Assume that $s_1,\cdots,s_N$ are non zero holomorphic sections of the rank $r$ holomorphic vector bundle $E$
   and denote $S=(s_1,\cdots,s_N)$ with $s_j =(s_{1j},\cdots,s_{rj})^t \in E$ for $j=1,\cdots,N.$
    Let $ A=S\bar{S}^t=\sum_{j=1}^N s_j \bar{s}^t_j,$ then

  \begin{eqnarray}
  A=\left(
    \begin{array}{ccccc}
    \sum_{j=1}^N |s_{1j}|^2         &  \sum_{j=1}^N s_{1j}\bar{s}_{2j} &    \cdots   &   \sum_{j=1}^N s_{1j}\bar{s}_{rj}&\\
    \sum_{j=1}^N s_{2j}\bar{s}_{1j} &      \sum_{j=1}^N |s_{2j}|^2     &    \cdots   &   \sum_{j=1}^N s_{2j}\bar{s}_{rj}& \\
    \vdots                          &            \vdots                &    \ddots   &     \vdots &\\
    \sum_{j=1}^N s_{rj}\bar{s}_{1j} & \sum_{j=1}^N s_{rj}\bar{s}_{2j} &     \cdots  &     \sum_{j=1}^N |s_{rj}|^2&\\
    \end{array}
    \right)
    \end{eqnarray}
    is a $r\times r$ Hermitian matrix. We denote the Moore-Penrose pseudoinverse of $A$ by $H.$ Note that when $H$ is a non-degenerate square matrix we have $A=H^{-1}.$
 There is  a natural (possible singular) Hermitian metric on $E$ defined by
$$||\xi||^2 ={\xi}^t H\overline\xi,~~~~{\rm for}~~\xi \in E_z.$$
It is easy to check that this definition does not depend on the local trivialization of $E$ and is well-defined.
Note $A$ is a nonnegative definite matrix and the set of singularity points  of the metric  are the points where $A$ is degenerate,  so it is the set
$Z=\{z\in M| {\rm rank} S(z)<r\},$ which are exactly the points where $s_1,\cdots,s_N$ don't generate the stalk $E_z.$ Clearly $Z$ is an analytic subset of $M$ and hence we say that the Hermitian metric has analytic singularity.

If $(s_1,\cdots,s_N)$  generated the stalk $E_z$ for all $z\in M,$ then $Z=\emptyset.$ If $(s_1,\cdots,s_N)$   generated the stalk $E_z$ at least for one point $z\in M$ (then $(s_1,\cdots,s_N)$  generated the stalks over an open subset of $M$),  we call $(s_1,\cdots,s_N)$  is {\it generically generated.} In this case we must have $N\geq r.$ Let $Gr(N,r) $ be the complex Grassmannian manifold of $r$-planes in complex $N$-dimensional vector space. Then there is a holomorphic embedding $F:M\rw Gr(N,r),p\mapsto [s_1 (p),\cdots,s_N(p)].$  Then the metric defined above is smooth and it is exactly the pull back by $F$ of the Fubiny-Study metric on the universal vector bundle of $Gr(N,r).$ If  moreover $r=1,$ then $||\xi||^2=|\xi|_U^2e^{-\varphi}.$ Here the metric weight $\varphi=\log (|s_1|^2+\cdots +|s_N|^2)$ is  a plurisubharmonic function on $U.$ }
\end{example}

\begin{proposition} If $(s_1,\cdots,s_N)$  is  generically generated then the (possible) singular  Hermitian metric in Example \ref{ex1} has positive curvature current.
\end{proposition}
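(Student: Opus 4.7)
Because positivity is local on $M$, fix a coordinate chart equipped with a holomorphic trivialization of $E$, and write $S$ for the $r\times N$ holomorphic matrix with columns $s_1,\ldots,s_N$, so that $A = S\bar S^t$. On the open set $U := M\setminus Z$ the matrix $A$ is invertible, hence $H = A^{-1}$ is smooth and $\Theta_H = i\,d''(H^{-1}d'H)$ is classically defined. Using $d''S = 0$, $d'\bar S^t = 0$, and $d'(A^{-1}) = -A^{-1}(d'A)A^{-1}$, one obtains $H^{-1}d'H = -(d'A)H$, and a short manipulation (expanding via $d''d'A = -(d'S)\wedge(d''\bar S^t)$ and $d''H = -H(d''A)H$) yields the compact formula
$$\Theta_H \;=\; i\,(d'S)\wedge P\,(d''\bar S^t)\,H, \qquad P := I_N - \bar S^t H S.$$
A direct verification shows that $P$ is the orthogonal projection of $\mathbb C^N$ onto $\ker(S)$; in particular $P$ is a Hermitian idempotent, so $P = PP^*$.

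To verify positivity of $\Theta_H$ on $U$ in the sense of Definition \ref{matrix}, fix $\xi\in\mathbb C^r$ and set $u := \xi^t d'S$ (a row of scalar $(1,0)$-forms) and $v := d''\bar S^t\,H\bar\xi$ (a column of scalar $(0,1)$-forms), so that $\xi^t\Theta_H\bar\xi = i\,u\,P\,v$. At an arbitrary point $z_0\in U$ one may further choose the holomorphic frame of $E$ so that $H(z_0) = I$; in that frame $v_k(z_0) = \overline{u_k(z_0)}$ and, using $P = PP^*$,
$$\xi^t\Theta_H\bar\xi\,\big|_{z_0} \;=\; i\sum_{j,k} P_{jk}\,u_j \wedge \bar u_k \;=\; \sum_\ell i\,\phi_\ell \wedge \bar\phi_\ell \;\geq\; 0, \qquad \phi_\ell := \sum_j P_{j\ell}\,u_j,$$
a non-negative $(1,1)$-form. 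Since positivity of the intrinsic scalar $(1,1)$-form $\xi^t\Theta_H\bar\xi$ at $z_0$ is frame-independent, $\Theta_H$ is a smooth positive Hermitian matrix $(1,1)$-form throughout $U$. Conceptually, this is the classical curvature identity for the holomorphic quotient $\mathcal{O}^N\twoheadrightarrow E$, $(a_j)\mapsto \sum_j a_j s_j$, whose curvature equals the Nakano semi-positive second-fundamental-form term $\beta\wedge\beta^*$ coming from the flat ambient bundle.

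To extend positivity across the analytic subset $Z$, regularize: on the chart set $A_\varepsilon := A + \varepsilon\,I_r = S_\varepsilon\bar S_\varepsilon^t$ with $S_\varepsilon := (\,S\mid\sqrt{\varepsilon}\,I_r\,)$, so $A_\varepsilon$ is everywhere positive definite and $H_\varepsilon := A_\varepsilon^{-1}$ is smooth on the whole chart. Applying the previous two paragraphs to the augmented system $S_\varepsilon$ gives a smooth positive Hermitian matrix $(1,1)$-form $\Theta_{H_\varepsilon}$ on the chart, and $H_\varepsilon\to H$ locally uniformly on $U$ as $\varepsilon\searrow 0$, whence $\Theta_{H_\varepsilon}\to\Theta_H$ as currents on $U$. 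The main obstacle is to upgrade this to weak convergence on the full chart so as to capture a positive current extension across $Z$. Since the cone of positive Hermitian matrix currents is closed under weak limits, it suffices to establish a uniform local $L^1$ mass bound for $\xi^t\Theta_{H_\varepsilon}\bar\xi$ near $Z$. This is the delicate step, because $H_\varepsilon$ itself blows up near $Z$ along the directions in which $A$ degenerates; the bound must be extracted from the structural formula above by exploiting that the projection $P_\varepsilon$ has operator norm $\leq 1$ and annihilates precisely those singular directions, so that the product $P_\varepsilon(d''\bar S_\varepsilon^t)H_\varepsilon$ admits an $\varepsilon$-independent local bound. Weak compactness of uniformly bounded positive currents then produces the positive-current extension of $\Theta_H$ to all of $M$, and reglueing across charts completes the proof.
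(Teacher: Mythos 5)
Your derivation on $U=M\setminus Z$ is correct and in fact more explicit than the paper's. The paper instead realizes $H$ as the pullback via the classifying (meromorphic) map $F:M\dashrightarrow Gr(N,r)$ of the universal quotient-bundle metric $h(f)=(f\bar f^t)^{-1}$, computes $i\Theta_h$ in normal coordinates at one point of the homogeneous space $Gr(N,r)$, and then takes $i\Theta_H=iF^*(\Theta_h)$. Your direct local identity $\Theta_H=i\,(d'S)\wedge P\,(d''\bar S^t)H$ with $P$ the Hermitian idempotent onto $\ker S$ is a nice alternative that avoids Grassmannian coordinates, and your point-wise positivity argument at a normalized frame is fine.

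The genuine gap is exactly where you flag it: you assert, but do not prove, that $\xi^t\Theta_{H_\varepsilon}\bar\xi$ has a uniform local $L^1$ mass bound near $Z$ as $\varepsilon\searrow 0$, and the heuristic ``$\lVert P_\varepsilon\rVert\le 1$ and $P_\varepsilon$ kills the singular directions'' does not yield this by itself, because $H_\varepsilon=(S\bar S^t+\varepsilon I)^{-1}$ can blow up on $Z$ faster than $P_\varepsilon(d''\bar S_\varepsilon^t)$ decays unless one exploits further structure. As written, the final two sentences describe a program, not a proof; without the mass bound there is nothing to which weak compactness can be applied, and without it you also have not shown that the weak limit agrees with the distributional curvature $id''(H^{-1}d'H)$ of the singular metric (which is what the statement requires). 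The standard way to close the gap, and the one most in the spirit of your computation, is to observe that by Cauchy--Binet $\det A=\sum_{|I|=r}|\det S_I|^2$ is a finite sum of squared moduli of holomorphic functions, so $\log\det A$ is PSH and the trace current $\operatorname{Tr}_E\Theta_H=id'd''\log\det A$ is a positive closed $(1,1)$-current with locally finite mass; Griffiths semi-positivity on $U$ then gives $0\le \xi^t\Theta_{H_\varepsilon}\bar\xi\le |\xi|^2\,\operatorname{Tr}_E\Theta_{H_\varepsilon}$ point-wise, and the right-hand side is uniformly controlled in $L^1_{loc}$ by the local masses of $id'd''\log\det A_\varepsilon\to id'd''\log\det A$. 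Adding this (or some equivalent domination argument) is necessary before the proposal can be considered a complete proof. The paper's Grassmannian route is also terse about pulling back $\Theta_h$ through the indeterminacy locus of $F$, but that is a separate concern; your version needs the mass bound supplied explicitly.
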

\begin{proof} On the Grassmannian manifold  $Gr(N,r),$  the universal bundle  $Q$, which is the quotient bundle of the trivial bundle $\C^N\rw Gr(N,r),$ and the quotient metric is given by $h(f)=(f\bar{f}^t)^{-1},$ where $f\in Gr(N,r)$ is a $r\times N$ matrix whose rows give the $r$-plane $f.$ Choose holomorphic transformation such that $f=(I_r,Z).$ In this holomorphic local coordinate we have $h=(I+Z\bar{Z}^t)^{-1}$ and $h(0)=I,dh(0)=0.$ Hence at the origin, the Chern curvature of the universal bundle is $i\Theta_h =id''d'h=idZ\wedge \overline {dZ}^t.$ Since  $Gr(N,r)$ is homogeneous under the unitary group and $h$ is an invariant Hermitian metric, we know $i\Theta_h$ is semi-positive (positive only when $r=1$) at origin and hence is semi-positive everywhere.

 If $(s_1,\cdots,s_N)$    globally generate the stalks of $E,$ then $F$ is smooth
 and hence $i\Theta_H =iF^*(\Theta_{h}) $ is semi-positive and smooth matrix of differential form.  If $(s_1,\cdots,s_N)$   doesn't generate the stalks of $E$
 then $H$ defines a singular Hermitian metric on $E.$
 If $(s_1,\cdots,s_N)$  is merely generically generated we still can define the map $F$  as above,  which is a meromorphic map and not necessarily holomorphic. The curvature current $i\Theta_H$ is the pull back of the curvature $i\Theta_{h}$ of  the Fubini-Study metric $h$ on the universal vector bundle $Q$ of $Gr(N,r).$
  We know $i\Theta_H =iF^*(\Theta_{h}) $ is a positive curvature current.
\end{proof}

\begin{remark}{\rm The line bundle $\wedge^r Q\rw Gr(N,r)$ has an induced metric $h_r(f) =\det (f\bar{f}^t) $ with positive curvature which gives an invariant K\"ahler metric $\omega_{Gr}=id''d'\log \det(I+Z\bar{Z}^t)$ on $Gr(N,r),$ which is exactly the pull-back of the Fubini Study metric via Pl\"ucker embedding
$Gr(N,r)\rw {\Bbb P}(\wedge^r \C^N).$
 Using that $d'\log \det(I+Z\bar{Z}^t)={\rm Tr}((I+Z\bar{Z}^t)^{-1}(dZ)\bar{Z}^t)$ we know
$$\omega_{Gr}=i{\rm Tr}\{\bar{Z}^t(I+Z\bar{Z}^t)^{-1}Z\overline{dZ}^t (I+Z\bar{Z}^t)^{-1}dZ-(I+Z\bar{Z}^t)^{-1}dZ\wedge \overline{dZ}^t\}.$$
Note that $\bar{Z}^t(I+Z\bar{Z}^t)^{-1}Z =I-(I+\bar{Z}^tZ)^{-1},$ we get the expression of $\omega_{Gr}$ as already given in \cite{wong}:
$$\omega_{Gr}=-i{\rm Tr} \{(I+\bar{Z}^t Z)^{-1}\overline{dZ}^t (I+Z\bar{Z}^t)^{-1}dZ\}=i{\rm Tr} \{ (I+Z\bar{Z}^t)^{-1}dZ(I+\bar{Z}^t Z)^{-1}\overline{dZ}^t \}.$$}
\end{remark}

\qed

 Given a Hermitian metric $H$ with measurable coefficients, we may smooth it via using convolution construction. To illustrate it we take the Euclid flat case as an example. Let 
  $H=(h_{\al\bar{\bt}})$ be a Hermitian matrix  of measurable functions, we could get a smooth matrix function $H_{\epsilon}=H\star \rho_{\epsilon}\stackrel{def}= (h_{\al\bar{\bt}}\star \rho_{\epsilon}),$   the convolution of its each entry with a smooth kernel function $\rho_{\epsilon}.$ Here $\rho_{\epsilon}=\frac{1}{\el^{2n}}\rho(\frac{z}{\el})$  and $$\rho(z)=\chi_{\{|z|<1\}}\cdot e^{-\frac{1}{1-|z|^2}}$$ is the standard positive mollifier.

 \begin{proposition}\label{smoot} Let $H\in L^1_{loc}(\C^r,\Her(\C^r)).$ If $H$ is  semi positive definite   then  $H_{\epsilon}$ is also  semi positive definite;
  if  $H$ is strictly positive definite then $H_{\epsilon}$ is strictly positive  definite too.
\end{proposition}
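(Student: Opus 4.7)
The plan is to reduce the matrix statement to a scalar statement about convolutions of non-negative functions. Fix a point $z$ and a vector $\xi\in\C^r$, and consider the scalar function $\phi_\xi(w):=\bar\xi^{\,t}H(w)\xi$. Since $H\in L^1_{loc}$, we have $\phi_\xi\in L^1_{loc}$, and because convolution acts entrywise, the sesquilinear form commutes with the convolution:
$$
\bar\xi^{\,t} H_\epsilon(z)\,\xi \;=\; \bar\xi^{\,t}\!\!\int H(z-w)\,\rho_\epsilon(w)\,dw\,\xi \;=\; \int \phi_\xi(z-w)\,\rho_\epsilon(w)\,dw \;=\; (\phi_\xi\star\rho_\epsilon)(z).
$$
This single identity is the key step; everything else is an immediate deduction from the sign of $\rho_\epsilon$.

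For the semi-positive case, the assumption that $H(w)\in\Her^+(\C^r)$ for almost every $w$ translates to $\phi_\xi(w)\geq 0$ almost everywhere. Combined with $\rho_\epsilon\geq 0$ this forces $(\phi_\xi\star\rho_\epsilon)(z)\geq 0$ for every $z$ and every $\xi$, hence $H_\epsilon(z)$ is semi-positive definite.

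For the strictly positive case, fix $\xi\neq 0$. Now $\phi_\xi(w)>0$ for a.e.\ $w$, while $\rho_\epsilon(w)>0$ on the open ball $\{|w|<\epsilon\}$ and vanishes outside. Intersecting the full-measure set $\{\phi_\xi>0\}$ (translated by $z$) with this ball yields a set of positive Lebesgue measure on which the integrand $\phi_\xi(z-w)\rho_\epsilon(w)$ is strictly positive, so the integral is strictly positive. Since this holds for every $z$ and every $\xi\neq 0$, $H_\epsilon(z)$ is strictly positive definite.

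There is no real obstacle: the proof is essentially the observation that convolution with a non-negative, and even strictly positive on an open ball, mollifier preserves non-negativity and strict positivity of a sesquilinear quadratic form. The only mild subtlety is making sure one evaluates the form \emph{before} integrating (which is legal because $\xi$ does not depend on $w$), so that the matrix inequality is tested against a scalar inequality that the classical convolution theory handles directly.
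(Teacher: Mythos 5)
Your proof is correct and follows essentially the same reduction as the paper's: for a fixed vector $\xi$ the sesquilinear form passes through the convolution, turning the matrix inequality into a scalar fact about mollifying a non-negative function. The paper phrases this with a translation operator $i_y$ because it tests against smooth vector-valued functions rather than a constant $\xi$, but the mathematical content is identical; your version also spells out the strict-positivity case (positive measure of $\{\rho_\epsilon>0\}\cap\{\phi_\xi>0\}$) more explicitly than the paper, which simply asserts it.
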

\begin{proof}  Note for any $a\in \C^n,$ the translation transformation $$i_a:C^{\infty}(\C^n)\rightarrow C^{\infty}(\C^n),~ \xi(x)\mapsto \xi(x+a)$$ is an algebra isomorphism of complex functions.
For any $\xi\in C^{\infty}(\C^n),$ we know $$\begin{array}{rcl}
(\xi^t H_{\epsilon}\bar\xi)(x) &=&\sum_{\al,\bt}\Big(\int_{\C^n}h_{\al\bar{\bt}}(x-y)\rho_{\epsilon}(y)dy\Big)\xi_{\al}(x)\bar{\xi}_{\bt}(x)\\
&=&\int_{\C^n}\sum_{\al,\bt}h_{\al\bar{\bt}}(x-y)(i_y\xi_{\al})(x-y)\overline{(i_y{\xi}_{\bt})}(x-y)\rho_{\epsilon}(y)dy.\\
\end{array}$$
 Hence if $H$ is semi positive definite,   then  $H_{\epsilon}$ is also semi positive definite; if  $H$ is strictly positive definite  then $H_{\epsilon}$ is strictly positive definite too.
 \end{proof}
 
Using Proposition \ref{smoot},   we may approximate a measurable Hermitian metric $H$ by smooth Hermitian metric $H_{\el}.$ However we
could not assure that the curvature $\Theta_{H_{\el}}$ converges to the curvature $\Theta_H,$ in some worse cases $\Theta_{H_{\el}}$ may have no any convergent sub sequences, in the topology that we usually use.
Moreover in application the curvature of $H$ usually has some positivity,
but the curvature  $\Theta_{H_{\el}}$ of the approximate metric may have no any positivity  related to that of $\Theta_H.$  In the next section we will introduce some better singular Hermitian metric to 
avoid these difficulties.

\subsection{Nef holomorphic vector bundle}
Let $E$ be a holomorphic vector bundle   on a Hermitian manifold $M.$
Recall that $E$ is called a nef vector bundle (in usual sense) \cite{dps} if the tautological line bundle $\ko_{\Pj (E^*)}(1)$ is a nef line bundle over $\Pj (E^*).$ Paper \cite{dps} gave a beautiful metric description of nefness when $E$ is a  line bundle. The metric description of nefness is very useful in application and generalization of algebraic geometric results.
 However we only know that a Hermitian metric on the tautological line bundle $\ko_{\Pj (E^*)}(1)$ naturally induces a Finsler metric on $E,$ and at present we don't know how to define  better Hermitian metrics on $E$ via using nefness metrics of the line bundle $\ko_{\Pj (E^*)}(1).$ In this paragraph we will define  some stronger nefness concepts, directly using the Hermitian metric  of $E.$

Let $H$ be a smooth Hermitian metric on $E$ and $G$ a smooth Hermitian metric $M.$
Let$(z_1,\cdots,z_n)$ be  local holomorphic coordinates of $M$ and  $\{e_1,\cdots,e_r\}$  a local
 orthogonal frame of $E$ with dual frame $\{e^1,\cdots,e^r\}.$ The Chern curvature form of  a given Chern connection has the following form
     $$\Theta_{H}=i\sum_{j,k,\al,\bt}R^{\bt}_{j\bk\al} dz_j\wedge d\bar{z}_k\otimes e^{\al}\otimes e_{\bt}\in \Gamma(M,\wedge^2T^*_M\otimes End(E)).$$
where $R^{\gamma}_{j\bk\al}=h^{\gamma\bar{\bt}}R_{j\bk\al\bar{\bt}}$ and $R_{j\bk\al\bar{\bt}}=-\frac{\partial^2 h_{\al\bar{\bt}}}{\partial z_j\partial \bar{z}_k}+h^{\gamma\bar{\delta}}\frac{\partial h_{\al\bar{\delta}}}{\partial z_j}\frac{\partial h_{\gamma\bar{\bt}}}{\partial \bar{z}_k}.$ Recall
that $E$ is called {\it Griffiths semipositive} if for any $u=\sum_j u^j\frac{\partial}{\partial z_j}$ and  $v=\sum_{\al}v^{\al}e_{\al},$ we have
   $$\Theta_{H}(u\otimes v,u\otimes v)=\sum_{j,k,\al,\bt} R_{j\bk\al\bar{\bt}} u^j\bar{u}^k v^{\al}\bar{v}^{\bt}\geq 0;$$
   $E$ is called {\it Nakano semipositive} if for any $u=\sum_{j,\al} u^{j\al}\frac{\partial}{\partial z_j}\otimes e_{\al},$ we have
   $$\Theta_{H}(u,u)=\sum_{j,k,\al,\bt} R_{j\bk\al\bar{\bt}} u^{j\al}\bar{u}^{k\bt}\geq 0.$$
  
   We will use the following fact many times in this paper. Let $u$ be a  $E$-valued $(n,q)$-from on $M.$ In holomorphic local coordinate write $u=\sum_{\al}\sum_{|K|=q}u^{\al}_K dz_1\wedge\cdots\wedge dz_n\wedge d\bar{z}^K\otimes e_{\al}$  with $K=\{j_1,\cdots,j_q\}$ and $d\bar{z}^K=d\bar{z}_{j_1}\wedge\cdots\wedge d\bar{z}_{j_q}.$ Then,
   \begin{equation}\langle [\Theta_H,\Lambda]u,u\rangle=\sum_{|S|=q-1}\sum_{j,k,\al,\bt}R_{j\bk\al\bar{\bt}} u^{\al}_{jS}{\bar{u}^{\bt}_{kS}}.\label{nakanoid}
   \end{equation}
   In particular, if $E$ is Nakano semi positive  then for any $u\in K_M\otimes E$ we have $\langle [\Theta_H,\Lambda]u,u\rangle\geq 0.$
   
\begin{definition}\label{nefff} A holomorphic vector bundle $E$ is said to be Griffiths nef, if for any $\epsilon>0$ there exists a smooth Hermitian metric $H_{\el}$ on $E$ such that its curvature satisfying  $\Theta_{H_{\el}}(u\otimes v,u\otimes v)\geq -\el ||u||^2_G||v||^2_{H_{\el}}$ for $u\in T_M$ and $v\in E.$ It is said to be Nakano nef, if for any $\epsilon>0$ there exist a smooth Hermitian metric $H_{\el}$ on $E$ such that its curvature satisfying  $\Theta_{H_{\el}}(u,u)\geq -\el ||u||^2_{G\otimes H_{\el}}$ for any $u\in T_M\otimes E.$\end{definition}

\begin{example}If $E=L_1\oplus\cdots\oplus L_r$ is direct sum of nef  holomorphic line bundles (in usual sense). Let $\varphi_j$ be the locally PSH function which gives the singular Hermitian metric $e^{-\varphi_j}$ on $L_j.$ Then $E$  with diagonal metric $H={\rm diag}(e^{-\varphi_1},\cdots,e^{-\varphi_r})$ is a Griffiths nef vector bundle, and also a Nakano nef vector bundle.
\end{example}

\begin{proposition} If $E$ is Griffiths nef  vector bundle, then it is  nef vector bundle (in usual sense).
\end{proposition}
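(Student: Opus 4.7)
The plan is to reduce to the metric characterization of nef line bundles established in \cite{dps}. Recall that a line bundle $L$ on a compact complex manifold $X$ equipped with a Hermitian form $\tilde{\omega}$ is nef if and only if, for every $\varepsilon>0$, there exists a smooth Hermitian metric $h_{\varepsilon}$ on $L$ whose Chern curvature satisfies $i\Theta_{h_{\varepsilon}}\geq -\varepsilon\tilde{\omega}$. Applying this criterion to $X=\mathbb{P}(E^{*})$ and $L=\mathcal{O}_{\mathbb{P}(E^{*})}(1)$, it suffices to manufacture such an approximating family of metrics on the tautological line bundle out of the Griffiths nef family $\{H_{\varepsilon}\}$ on $E$.

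First I would fix once and for all a Hermitian form $\tilde{\omega}$ on $\mathbb{P}(E^{*})$ of the form $\tilde{\omega}=\pi^{*}\omega_{M}+\omega_{\mathrm{vert}}$, where $\pi:\mathbb{P}(E^{*})\to M$ is the projection, $\omega_{M}$ is the Hermitian form associated with the fixed metric $G$ on $M$, and $\omega_{\mathrm{vert}}$ is any smooth semi-positive $(1,1)$-form on $\mathbb{P}(E^{*})$ that is strictly positive on each fiber (for instance, the relative Fubini--Study form induced by an auxiliary smooth reference metric $H_{0}$ on $E$). Next, for the metric $H_{\varepsilon}$ provided by the Griffiths nef hypothesis, I would use the standard construction: $\mathcal{O}_{\mathbb{P}(E^{*})}(-1)\subset\pi^{*}E^{*}$ inherits the submetric from $H_{\varepsilon}^{*}$, and dualizing gives a smooth metric $h_{\varepsilon}$ on $\mathcal{O}_{\mathbb{P}(E^{*})}(1)$.

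The core computation is the well-known formula (originally due to Kobayashi/Griffiths) expressing $i\Theta_{h_{\varepsilon}}$ at a point $(x,[\xi])\in\mathbb{P}(E^{*})$: along purely vertical tangent vectors it equals the (semi)positive fiberwise Fubini--Study contribution coming from $H_{\varepsilon}^{*}$, while along a horizontal tangent vector $u\in T_{x}M$ it equals $\Theta_{H_{\varepsilon}}(u\otimes v,u\otimes v)/\|v\|_{H_{\varepsilon}}^{2}$ for a vector $v\in E_{x}$ dual to the line $[\xi]$, and the cross terms between horizontal and vertical pieces can be absorbed by a completing-the-square argument (or, more intrinsically, by choosing normal frames at the point). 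Plugging the Griffiths bound $\Theta_{H_{\varepsilon}}(u\otimes v,u\otimes v)\geq -\varepsilon\|u\|_{G}^{2}\|v\|_{H_{\varepsilon}}^{2}$ into the horizontal term and discarding the semi-positive vertical Fubini--Study term, this yields the $(1,1)$-form inequality
\[
i\Theta_{h_{\varepsilon}}\;\geq\;-\varepsilon\,\pi^{*}\omega_{M}\;\geq\;-\varepsilon\,\tilde{\omega}
\]
on all of $\mathbb{P}(E^{*})$. By the metric criterion of \cite{dps} this proves that $\mathcal{O}_{\mathbb{P}(E^{*})}(1)$ is nef, which is precisely the definition of $E$ being nef in the usual sense.

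The step I expect to require the most care is the treatment of the horizontal/vertical cross terms in the curvature of $h_{\varepsilon}$, so as to make the inequality valid as genuine $(1,1)$-forms and not merely on decomposed tangent vectors; working in a unitary frame for $H_{\varepsilon}$ at the base point together with affine homogeneous coordinates on the fiber makes these cross terms vanish pointwise and delivers the clean bound above. A minor subtlety is that the Fubini--Study contribution depends on $H_{\varepsilon}$, but since we simply drop it (using only its non-negativity), no uniform control of $H_{\varepsilon}$ across $\varepsilon$ is actually needed.
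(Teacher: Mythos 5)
Your proposal is correct and takes essentially the same route as the paper: both pass to the induced metric $h_{\varepsilon}$ on $\ko_{\Pj(E^*)}(1)$, use normal coordinates for $H_{\varepsilon}$ at the base point to kill the horizontal/vertical cross terms of $\Theta_{h_{\varepsilon}}$ (the paper writes the resulting block-diagonal curvature matrix out explicitly), plug the Griffiths bound into the horizontal block, and keep the semi-positive Fubini--Study vertical block. You are merely a bit more careful in explicitly invoking the metric characterization of nefness from \cite{dps} and fixing a reference Hermitian form $\tilde\omega$ on $\Pj(E^*)$, which the paper leaves implicit.
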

\begin{proof}
Let $e_1,\cdots,e_r$ be the local holomorphic frame of $E$ and $e^1,\cdots,e^r$ the dual frame on $E^*.$ The corresponding holomorphic coordinate of $E^*$ is denoted by $w=(W_1,\cdots,W_r)$ and the homogeneous coordinate on fibers of $\Pj(E^*)$ is denoted by $[W_1:\cdots:W_r].$ We use $(w_1,\cdots,w_{r-1})$ to denote the holomorphic coordinate on fibers of $\Pj(E^*)$ and $z=(z_1,\cdots,z_{n})$ to denote the holomorphic coordinate on $X.$ Denote $W=(W_1,\cdots,W_r)^t$ as a column vector. The metrics $H_{\el}=(h_{j\bar{k}})$ naturally induce Hermitian metrics $H^L_{\el}$ on $L:=\ko _{\Pj(E^*)}(1)$
defined by $H^{L}_{\el}(z,w)=h_{j\bar{k}}W^j\bar{W}^k$ for $(z,w)\in \mathbb{P}(E^*).$ Fix a $\el,$ choose normal coordinates
$z^1,\cdots,z^n$  such that $H_{\el}(0)=I$ and $dH_{\el}(0)=0.$ Then $\Theta_{H^L_{\el}}
=id'd''\log H(z,w)$ and

$$\begin{array}{rcl}
\Theta_{H^L_{\el}}&=&-\frac{\sum_{j,k}(\partial \bar{\partial }h_{j\bar{k}})W_j \bar{W}_k} {\sum_{j,k}
h_{j\bar{k}}W_j\bar{W}_k}+ \frac{\sum_{j,k} h_{j\bar{k}}d W_j\wedge
d\overline{ W}_k} {\sum_{j,k} h_{j\bar{k}}W_j\bar{W}_k}\\
&&-\frac{
\sum_{j,k}h_{j\bar{k}}d W_j \bar{W}_k\wedge \sum_{j,k}h_{j\bar{k}}W_j
d\overline{ W}_k} {\sum_{j,k}(
h_{j\bar{k}}W_j\bar{W}_k)^2}.
\end{array}
$$
In matrix form, the curvature $$\Theta_{H^L_{\el}}= \left
(\begin{array}{ccc}\big(\frac{\Theta_{H_{\el}}(W,W) }{|W|^2_{H_{\el}}}\big)_{n\times n}&\Big|& 0\\
-\!\!\!-\!\!\!-\!\!\!-\!\!\!-\!\!-& &\!\!\!-\!\!\!-\!\!\!-\!\!\!-\!\!\!-\!\!\!-\!\!\!-\!\!\!-\!\!\!-\!\!\!-\!\!\!-\!\!\!-\!\!-\\
 0 &\Big|&
 \Big(\frac{(1+|w|^2)\delta_{jk}-w_k\bar{w}_j}{(1+|w|^2)^2}\Big)_{(r-1)\times
 (r-1)}
\end{array}
 \right), $$
where $w=(\frac{W_1}{W_j},\cdots,\frac{W_{j-1}}{W_j},
\frac{W_{j+1}}{W_j},\cdots,\frac{W_r}{W_j})$ is the local coordinate
of the open subset $\Omega_j=\{W_j\not=0\}$ of the fiber
$\mathbb{P}(E^*|_z).$ Note that
$\Big(\frac{(1+|w|^2)\delta_{ij}-w_j\bar{w}_i}{(1+|w|^2)^2}\Big)_{(r-1)\times
 (r-1)}$ is a positive definite matrix with eigenvalues
 $1/(1+|w|^2)$ of order $r-2$ and eigenvalue $1/(1+|w|^2)^2$ of
 order $1.$ Since $E$ is a Griffiths nef vector bundle we have $\frac{\Theta_{H_{\el}}(W,W) }{|W|^2_{H_{\el}}}\geq -\el I_{n\times n}.$ So $E$ is a nef vector bundle (in usual sense).
\end{proof}

\begin{proposition}\label{grna} If $E$ is  Griffiths nef, then $E\otimes \det E$  Nakano nef.
\end{proposition}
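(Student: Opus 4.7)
The plan is a standard Demailly--Skoda argument adapted to the nef setting by a perturbation. As a black box I use the classical pointwise algebraic lemma underlying the Demailly--Skoda theorem: if an $\End (E)$-valued real $(1,1)$-form $\Theta$ on $M$ is Griffiths semipositive, then the twisted form $\Theta+(\mathrm{tr}_E\,\Theta)\otimes\mathrm{Id}_E$ is Nakano semipositive on $T_M\otimes E$. When $\Theta=\Theta_H$ is the Chern curvature of a smooth Hermitian metric $H$ on $E$, this twisted form is precisely the Chern curvature of the induced metric $H\otimes\det H$ on $E\otimes\det E$, so the lemma encodes the usual Demailly--Skoda implication in the positive case. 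The strategy is to apply this lemma to a curvature that has been artificially shifted to be Griffiths semipositive, then keep track of the resulting perturbation.

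Fix $\epsilon>0$. By Definition \ref{nefff} choose a smooth Hermitian metric $H_\epsilon$ on $E$ satisfying
\[
\Theta_{H_\epsilon}(u\otimes v,u\otimes v)\geq -\epsilon\,\|u\|_G^2\,\|v\|_{H_\epsilon}^2, \qquad u\in T_M,\ v\in E,
\]
equivalently, the $\End (E)$-valued $(1,1)$-form $\widetilde{\Theta}_\epsilon:=\Theta_{H_\epsilon}+\epsilon\, G\otimes\mathrm{Id}_E$ is pointwise Griffiths semipositive. The algebraic lemma applied to $\widetilde{\Theta}_\epsilon$ then yields that $\widetilde{\Theta}_\epsilon+(\mathrm{tr}_E\,\widetilde{\Theta}_\epsilon)\otimes\mathrm{Id}_E$ is Nakano semipositive on $T_M\otimes E$. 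Using $\mathrm{tr}_E(G\otimes\mathrm{Id}_E)=rG$ and expanding, the difference between this Nakano semipositive form and the Chern curvature $\Theta_{H_\epsilon}+(\mathrm{tr}_E\,\Theta_{H_\epsilon})\otimes\mathrm{Id}_E$ of the induced metric $H_\epsilon\otimes\det H_\epsilon$ on $E\otimes\det E$ equals exactly $(1+r)\epsilon\, G\otimes\mathrm{Id}_E$. Consequently
\[
\Theta_{H_\epsilon\otimes\det H_\epsilon}(\xi,\xi)\geq -(1+r)\epsilon\,\|\xi\|_{G\otimes(H_\epsilon\otimes\det H_\epsilon)}^2
\]
for every $\xi\in T_M\otimes(E\otimes\det E)$.

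Since $\epsilon>0$ was arbitrary, replacing $\epsilon$ by $\epsilon/(1+r)$ and taking $H_{\epsilon/(1+r)}\otimes\det H_{\epsilon/(1+r)}$ as the approximating metric on $E\otimes\det E$ reproduces exactly the Nakano nefness inequality of Definition \ref{nefff}, proving that $E\otimes\det E$ is Nakano nef. The only nontrivial ingredient is the classical Demailly--Skoda pointwise algebraic lemma, which I expect to be the main substantive step (it is proved by diagonalising the Griffiths form and manipulating the trace over $E$), while the rest is routine bookkeeping of the trace term and of the dimensional constant $1+r$. The main (minor) subtlety is merely to ensure that the background Hermitian metric $G$ on $M$ in the Griffiths and Nakano definitions is used consistently, so that the shift $\epsilon\,G\otimes\mathrm{Id}_E$ yields a uniform bound in both senses.
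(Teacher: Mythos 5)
Your proposal is correct. It is conceptually the same Demailly--Skoda argument as in the paper, but the two presentations differ in how the nefness (as opposed to semipositivity) is absorbed. The paper does not reduce to the semipositive case: it works directly with the Griffiths-nef curvature $\Theta_{H_\epsilon}$, runs the discrete Fourier averaging over the finite set $S$ of root-of-unity-valued maps $\sigma$, and uses the resulting identity to convert the Nakano quadratic form of $E\otimes\det E$ into an average of Griffiths quadratic forms $\Theta_{H_\epsilon}(\hat{u}_\sigma\otimes\hat{v}_\sigma,\hat{u}_\sigma\otimes\hat{v}_\sigma)$, each of which is bounded below by $-\epsilon\|\hat u_\sigma\|^2_G\|\hat v_\sigma\|^2_{H_\epsilon}$; summing and applying the same Fourier orthogonality relation to the right-hand side yields the bound $-r\epsilon\|u\|^2_{G\otimes H_\epsilon}$. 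You instead first perturb $\Theta_{H_\epsilon}$ by adding $\epsilon\,G\otimes\mathrm{Id}_E$ to make it \emph{genuinely} Griffiths semipositive, then quote the classical pointwise Demailly--Skoda inequality as a black box, and finally subtract the perturbation and its trace, getting the constant $(1+r)\epsilon$. Both bounds are equally useful since $\epsilon$ is arbitrary. Your version is cleaner as a reduction (it isolates the algebraic content into a cited lemma and makes the perturbation bookkeeping transparent), while the paper's version is self-contained, reproving the Demailly--Skoda inequality in the process. One thing worth flagging, which you implicitly use but do not state: the pointwise Demailly--Skoda lemma applies to any Hermitian $\End(E)$-valued real $(1,1)$-form, not merely to forms arising as Chern curvatures, so it legitimately applies to the shifted form $\widetilde{\Theta}_\epsilon$ even though $\widetilde{\Theta}_\epsilon$ is not the curvature of any Hermitian metric on $E$. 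Also note that the $\det H_\epsilon$ factor appearing in the metric $H_\epsilon\otimes\det H_\epsilon$ cancels identically between the Nakano quadratic form and the norm $\|\xi\|^2_{G\otimes(H_\epsilon\otimes\det H_\epsilon)}$, which is why one may safely pass between the quadratic form on $T_M\otimes E$ and the one on $T_M\otimes(E\otimes\det E)$.
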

\begin{proof} We can prove this proposition by using discrete Fourier transformation, a method used by Demailly and Skoda \cite{desk}. Choose local coordinate on $M$ and local orthogonal frame $\{e_1,\cdots,e_r\}$ of $E\otimes \det E$ such that $G=(\delta_{jk})$ and $H_{\el}=\delta_{\al\bt}.$
Then the curvature of  $E\otimes\det(E)$ is expressed by  $R^{E\otimes\det(E)}_{j\bk\al\bar{\bt}}=R_{j\bk\al\bar{\bt}}+\delta_{\al\bt}\sum_{\gamma}R_{j\bk\gamma\bar{\gamma}}$ and
any $u=\sum_{j,\al} u^{j\al}\frac{\partial}{\partial z_j}\otimes e_{\al}$ we have
\begin{align}\sum_{j,k,\al,\bt} R^{E\otimes\det(E)}_{j\bk\al\bar{\bt}} u^{j\al}\bar{u}^{k\bt}=\sum_{j,k,\al,\bt}( R_{j\bk\al\bar{\bt}} u^{j\al}\bar{u}^{k\bt}+ R_{j\bk\al\bar{\al}} u^{j\bt}\bar{u}^{k\bt}). \label{nagrf}\end{align}
Fix a sufficiently large positive integer $N$ and let $S$ be the set of maps $\sigma:\{1,\cdots,r\}\rw\{1,e^{\frac{1}{N}{2{\prod} i}},\cdots,
e^{\frac{N-1}{N} 2{\prod} i}\}.$
$$\hat{u}_{\sigma}=\sum_{j=1}^n\sum_{\al =1}^r u^{j\al}\sigma(\al)\frac{\partial}{\partial z_j}\in T_M,\hskip 0.5cm \hat{v}_{\sigma}=\sum_{\al =1}^r \sigma(\al)e_{\al}\in E\otimes \det E.$$
Then
$$\begin{array}{rcl}
&&\frac{1}{N^r}\sum_{\sigma\in S}\Theta_{H_{\el}}(\hat{u}_{\al}\otimes \hat{v}_{\al},\hat{u}_{\al}\otimes \hat{v}_{\al})\\
&=&\frac{1}{N^r}\sum_{\sigma\in S}\sum_{j,k,\al,\bt}R_{j\bk\al\bar{\bt}}(\sum_{\gamma =1}^r u^{j\gamma}\sigma(\gamma)(\sum_{\delta =1}^r \overline{u^{k\delta}\sigma(\delta)} )\sigma(\al)\overline{\sigma(\bt)}\\
&=&\frac{1}{N^r}\sum_{\sigma\in S}\sum_{j,k,\al,\bt}R_{j\bk\al\bar{\bt}}\sum_{p=1}^{N-1}\sum_{\gamma,\delta=1}^r u^{j\gamma}\overline{u^{k\delta}}\sigma(\gamma)\overline{\sigma(\delta)} \sigma(\al)\overline{\sigma(\bt)}\\
\end{array}$$
Note   \begin{align}
\frac{\sum_{\sigma\in S}\sum_{\gamma,\delta=1}^r u^{j\gamma}\overline{u^{k\delta}}\sigma(\gamma)\overline{\sigma(\delta)}
 \sigma(\al)\overline{\sigma(\bt)}}{N^r}=\left\{
\begin{array}{cc}
\sum_{\gamma=1}^r u^{j\gamma}\overline{u^{k\gamma}}, &{\rm if}~~~~~~\al=\beta,\\
 u^{j\al}\overline{u^{k\bt}},&{\rm if}~~~~~~ \al\not=\beta.\\
 \end{array}
 \right.\label{four}
  \end{align}
  It together with (\ref{nagrf}) imply that
\begin{align}\sum_{j,k,\al,\bt} R^{E\otimes\det(E)}_{j\bk\al\bar{\bt}} u^{j\al}\bar{u}^{k\bt}=\frac{1}{N^r}\sum_{\sigma\in S}\Theta_{H_{\el}}(\hat{u}_{\al}\otimes \hat{v}_{\al},\hat{u}_{\al}\otimes \hat{v}_{\al}). \label{ident}\end{align}
 By the definition Griffiths nef  we  have
 \begin{align}
  \sum_{j,k,\al,\bt} R^{E\otimes\det(E)}_{j\bk\al\bar{\bt}} u^{j\al}\bar{u}^{k\bt}
  \geq -\el \frac{1}{N^r}\sum_{\sigma\in S} ||\hat{u}_{\al}||^2_{G}||\hat{v}_{\al}||_{H_{\el}}.
    \end{align}
  Since
 \begin{align}||\hat{u}_{\al}||^2_{G}||\hat{v}_{\al}||_{H_{\el}}=\sum_{\sigma\in S}\sum_{j,\gamma,\delta,\al}  u^{j\gamma}\overline{u^{j\delta}} \sigma(\gamma)\overline{\sigma(\delta)}\sigma(\al)\overline{\sigma(\al)}.\label{idents} \end{align}
Using (\ref{four}) again, we get from (\ref{ident}) to (\ref{idents}) that
 $$R^{E\otimes\det(E)}_{j\bk\al\bar{\bt}} u^{j\al}\bar{u}^{k\bt}\geq -r\el ||u||^2_{G\otimes H_{\el}},$$
 hence $E\otimes\det(E)$ is Nakano nef.
\end{proof}
\begin{definition} The curvature current $\Theta_E\in D{'}^0_{1,1}(M,\Her(E))$ of a singular Hermitian metric is called Griffith pseudoeffective
if $\Theta_E$ is a positive Hermitian current in the sense of Definition \ref{matrix}, i.e., if for any smooth local section $u$ of $E$ with compact support, $\Theta_E(u,u)$ is a positive $(1,1)$-current. It is called Nakano pseudoeffective
if $\Theta_E (v,v)$ is a positive distribution for any smooth local section $v=\sum_j u^j\frac{\partial}{\partial z_j}$ of $E\otimes T_X$ with compact support, where $u^j$ are local sections of $E.$
\end{definition}

Note if $E$ is Nakano pseudoeffective then it is  Griffith pseudoeffective. In fact write $\Theta_H =\sum_{j,k}T^E_{j\bk}dz_j\wedge d\bar{z}_{\fk},$
where $T^E_{j\bk}$ are generalized matrix function. then
\begin{align}\Theta_E(u,u)=\sum_{j,k}iT^E_{j\bk}(u,u)dz_j\wedge d\bar{z}_{\fk},\label{pseu1} \end{align}
 take $v=u\otimes w_j\frac{\partial}{\partial z_j}$ with $u$ a smooth local section of $E.$ Then
\begin{align}\Theta_E(v,v)=\sum_{j,k}iT^E_{j\bk}(u,u)w_j\bar{w}_k.\label{pseu2}\end{align}
 Clearly if the R.H.S. of (\ref{pseu2}) is positive distribution for any $u\in E$ and $\sum w_j\frac{\partial}{\partial z_j}\in T_X$ then the R.H.S. of (\ref{pseu1}) is a positive $(1,1)$-current  for any $u\in E$.

For a Hermitian metric $H$ on a holomorphic vector bundle $E$ of rank $r$ over a compact complex manifold $M$ with measurable entries $h_{\al\bar{\bt}},$
we have the following norms for $1 \leq p< \infty:$ for any contractible coordinate open subset $\Omega\subset M,$ choose local trivialization of $E$ on $\Omega$ and write the entries $h_{\al\bar{\bt}}$ as measurable function, and set
   $$||H||_{L^p  (\Omega)}=\max_{1\leq \al,\bt\leq r} ||h_{\al\bar{\bt}}||_{L^p  (\Omega)}.$$
   We  call  the metric $H$ is in $L^p (M,\Her^+ (E))$ if $||H||_{L^p  (M)}=\sum_{\al} ||\varphi_{\al}H||_{L^p  (\Omega_{\al})}<+\infty,$ where $\{\varphi_{\al}\}$ is a smooth partition of  unity of a coordinate open covering $\{\Omega_{\al}\}$ of $M.$  We say the metric $H$ is in $W^{k,p} (M,\Her^+ (E))$ if $H\in L^p (M,\Her^+ (E))$ and $\partial^{\gamma} H\in L^p (M,\Her^+ (E))$ for $0\leq |\gamma |\leq k.$ Here $\partial^{\gamma} H=(\partial^{\gamma} h_{\al\bar{\bt}})$ and $\partial^{\gamma} h_{\al\bar{\bt}}$ are weak derivatives of $ h_{\al\bar{\bt}}.$

\begin{proposition}\label{knab} Let  $M$ be a K\"ahler manifold  with a complete  K\"ahler metric $\omega$ and $E$ be a Hermitian holomorphic vector bundle equipped with singular Hermitian metric
 $H$ of type I. Suppose there is a family of smooth Hermitian metric $H_{\el}$ such that $\lim_{\el\rw 0}||H^{-1}_{\el}-H^{-1}||_{C^0}=0$
and  $\lim_{\el\rw 0}||H_{\el}-H||_{W^{1,2}}=0.$
Let $\eta,\lambda$ be smooth positive function on $M.$ Then for any $f\in W^{1,2}_0(M,\Lambda^{n,q}T^*_M\otimes E),$
\begin{align}
\Vert (\eta +\lambda^{-1})^{\frac{1}{2}}D''^* f\Vert_H ^2 +\Vert\eta^{\frac{1}{2}}D''f\Vert_H^2 \geq ( [ \eta\sta -id'd''\eta-i\lmd d'\eta\wedge d''\eta,\Lm] f,  f)_H.\label{kodt} \end{align}
\end{proposition}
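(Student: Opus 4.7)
The strategy is to apply Proposition \ref{kno} with $H$ replaced by each smooth approximating metric $H_\el$ and then pass to the limit $\el\to 0$. Because each $H_\el$ is smooth, Proposition \ref{kno} (combined with the density of $C^\infty_0$ in $W^{1,2}_0$) gives, for every $f\in W^{1,2}_0(M,\Lambda^{n,q}T^*_M\otimes E)$,
\begin{equation*}
\Vert (\eta +\lambda^{-1})^{\frac{1}{2}}D''^*_\el f\Vert_{H_\el}^2 +\Vert\eta^{\frac{1}{2}}D''f\Vert_{H_\el}^2 \geq \bigl([\eta\Theta_{H_\el} -id'd''\eta-i\lambda d'\eta\wedge d''\eta,\Lambda] f, f\bigr)_{H_\el},
\end{equation*}
where $D''^*_\el$ is the formal adjoint of $D''$ with respect to $H_\el$. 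It then suffices to check that each of the three groups of terms converges, as $\el\to 0$, to its counterpart for $H$.

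The norm terms on the left are handled as follows. The quantity $\Vert\eta^{1/2}D''f\Vert_{H_\el}^2$ depends only on the pointwise values of $H_\el$, and since $H_\el^{-1}\to H^{-1}$ in $C^0$ one obtains uniform convergence $H_\el\to H$ on compact subsets (via $H_\el=(H_\el^{-1})^{-1}$ together with the $W^{1,2}$-bound), whence $\Vert\eta^{1/2}D''f\Vert_{H_\el}^2\to \Vert\eta^{1/2}D''f\Vert_H^2$. For $\Vert(\eta+\lambda^{-1})^{1/2}D''^*_\el f\Vert_{H_\el}^2$, I would use the formula $D''^*_\el f=-\bar *_{H_\el^*}D''\bar *_{H_\el} f$ from (\ref{hodgst1}) and expand
\begin{equation*}
D''\bigl(H_\el\,\overline{*f}\bigr)=(d''H_\el)\,\overline{*f}+H_\el\,D''\overline{*f}.
\end{equation*}
The first summand converges in $L^2$ because $d''H_\el\to d''H$ in $L^2$ and $\overline{*f}\in L^\infty_{loc}$ when combined with the smooth cutoff through $\eta,\lambda$; the second converges because $H_\el\to H$ uniformly and $D''\overline{*f}\in L^2$. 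Applying $\bar *_{H_\el^*}$, which depends only on $H_\el^{-1}$ that converges in $C^0$, produces $L^2$-convergence of $D''^*_\el f$ to $D''^*f$, and hence of the norms.

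For the curvature term I would transfer the outer derivative off $\Theta_{H_\el}=id''(H_\el^{-1}d'H_\el)$. After expressing the pairing against the test form $\eta\, f\otimes \bar f$ in integrated form, one integration by parts leaves only the connection $1$-form $H_\el^{-1}d'H_\el$ undifferentiated, with the remaining $d''$ falling on $\eta$ and on the $W^{1,2}$-form $f\otimes\bar f$. Since $H_\el^{-1}\to H^{-1}$ in $C^0$ and $d'H_\el\to d'H$ in $L^2$, we get $H_\el^{-1}d'H_\el\to H^{-1}d'H$ in $L^2_{loc}$, which is precisely what is needed for the integrated curvature pairing to converge to the corresponding expression with $\Theta_H$. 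Combining the three limits yields (\ref{kodt}).

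The main obstacle is precisely this last step: although the hypotheses do not allow strong convergence of second derivatives of $H_\el$, the curvature pairing must still converge. This is why it is essential to exploit the definition of $\Theta_H$ as a weak derivative, integrate by parts onto the smooth weight $\eta$ and the $W^{1,2}$-form $f$, and reduce the question to $L^2_{loc}$-convergence of the connection form $H_\el^{-1}d'H_\el$. The $C^0$-control of $H^{-1}$ and the $W^{1,2}$-control of $H$ in the hypothesis are tailored exactly to make this reduction work.
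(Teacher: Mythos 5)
Your overall strategy — apply Proposition \ref{kno} to the smooth approximants $H_\el$, and then pass to the limit term by term, exploiting the $W^{1,2}$-control on $H_\el$ and the $C^0$-control on $H_\el^{-1}$ to handle the curvature pairing via integration by parts — is exactly the route the paper takes. In particular your observation that the curvature pairing must be reduced to $L^2_{\mathrm{loc}}$-convergence of the connection form $H_\el^{-1}d'H_\el$ is the same key idea behind the paper's decomposition of the difference into the quantities $I_1(\el)$, $I_2(\el)$ and the subsequent $\hat I_j(\el)$ estimates.

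However, two of the auxiliary convergence claims you interpolate to carry out the program are not supported by the hypotheses, and this is where the argument needs repair. First, you deduce uniform convergence $H_\el\to H$ on compact sets from $H_\el^{-1}\to H^{-1}$ in $C^0$. That only works where $H^{-1}$ is bounded away from degeneracy; the type I hypothesis says only that $H^{-1}$ is continuous, which allows $\det H^{-1}$ to vanish, so $H$ may be unbounded and matrix inversion fails to be uniformly continuous near that locus. The convergence of $\Vert\eta^{1/2}D''f\Vert_{H_\el}$ has to come from the stated hypothesis $\Vert H_\el-H\Vert_{W^{1,2}}\to 0$, paired against the $L^2$-data of $f$ using the uniform bound on $\Vert H_\el^{-1}\Vert_{C^0}$, rather than from any claimed pointwise or uniform convergence of $H_\el$ itself. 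Second, to show $L^2$-convergence of $(d''H_\el)\overline{*f}$ you invoke $\overline{*f}\in L^\infty_{\mathrm{loc}}$; but $f\in W^{1,2}_0$ gives only $\overline{*f}\in W^{1,2}$, which does not embed into $L^\infty_{\mathrm{loc}}$ when $\dim_{\R}M>2$, and multiplying by the smooth weights $\eta,\lambda$ does not raise the regularity of $f$. The correct move, as in the paper's $\hat I_j$ bounds, is to distribute Cauchy--Schwarz so that the factor $d(H_\el-H)$ (which the $W^{1,2}$ hypothesis controls in $L^2$) is paired against an $L^2$ quantity built from $f$ and the uniformly $C^0$-bounded $H_\el^{-1}$, keeping all derivative-carrying factors in $L^2$ and never requiring an $L^\infty$ bound on $*f$.
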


\begin{proof} We may assume $f\in C^{\infty}_0(M,\Lambda^{n,q}T^*_M\otimes E)$ since $C^{\infty}_0(M,\Lambda^{n,q}T^*_M\otimes E)$ is dense in $ W^{1,2}_0(M,\Lambda^{n,q}T^*_M\otimes E),$
Using Proposition \ref{kno} for the smooth metric $H_{\el}$ we get
$$\Vert (\eta +\lambda^{-1})^{\frac{1}{2}}D''^* f\Vert_{H_{\el}} ^2 +\Vert\eta^{\frac{1}{2}}D''f\Vert_{H_{\el}}^2 \geq ( [ \eta\Theta_{H_{\el}} -id'd''\eta-i\lmd d'\eta\wedge d''\eta,\Lm] f,  f)_{H_{\el}}.$$
Clearly for any $f\in W^{1,2}(M,\Lambda^{n,q}T^*_M\otimes E),$
$$\begin{array}{rcl}
\lim_{\el\rw 0}\Vert (\eta +\lambda^{-1})^{\frac{1}{2}}D''^* f\Vert_{H_{\el}} ^2&=&\Vert (\eta +\lambda^{-1})^{\frac{1}{2}}D''^* f\Vert_H ^2,\\
\lim_{\el\rw 0}\Vert\eta^{\frac{1}{2}}D''f\Vert_{H_{\el}}^2 &=&\Vert\eta^{\frac{1}{2}}D''f\Vert_{H}^2.\\
\end{array}$$
Since $\eta$ and $\lambda$ are smooth, we also have
$$\lim_{\el\rw 0}( [ -id'd''\eta-i\lmd d'\eta\wedge d''\eta,\Lm] f,  f)_{H_{\el}} =( [  -id'd''\eta-i\lmd d'\eta\wedge d''\eta,\Lm] f,  f)_{H}.$$
Hence to obtain (\ref{kodt}) it suffices to prove that
  \begin{equation}\lim_{\el\rw 0}( [ \eta\Theta_{H_{\el}} ,\Lm] f,  f)_{H_{\el}}=( [ \eta\sta ,\Lm] f,  f)_{H}.\label{weaklim}\end{equation}
  Since $f$ is $E$-valued $(n,q)$-form we have $[ \Theta_{H_{\el}} ,\Lm] f=e(\Theta_{H_{\el}})e(\omega)^* f,$ this together the definition of the inner product
  in (\ref{hodgst2})  imply that
  $$( [ \eta\Theta_{H_{\el}} ,\Lm] f,  f)_{H_{\el}}=\int_M \eta e(\Theta_{H_{\el}})e(\omega)^* f\wedge  H_{\el}\overline{* f}.$$
  In the same way we rewrite R.H.S. of (\ref{weaklim}), and taking difference of both sides of (\ref{weaklim}), we get
$$\begin{array}{rcl}
&& |( [ \eta\Theta_{H_{\el}} ,\Lm] f,  f)_{H_{\el}}-( [ \eta\Theta_{H} ,\Lm] f,  f)_{H}|\\
&=&\underbrace{\Big |\int_M \eta e(\Theta_{H_{\el}})e(\omega)^* f\wedge  (H_{\el}-H)\overline{* f}\Big |}_{\underset{\rm def}{=}{I}_1(\el)}+\underbrace{\Big |\int_M \eta e(\Theta_{H_{\el}}-\Theta_{H})e(\omega)^* f\wedge  H \overline{* f}\Big |}_{\underset{\rm def}{=}{I}_1(\el)}.\\
\end{array}$$
In the following we will prove the limits of both terms $I_1(\el)$ and $I_2(\el)$ are zeros when $\el\rw 0.$

 In the term $I_1(\el),$ after  taking  out maximum of $\eta,$ then integrating by part, we know it is up bounded by the following three terms:
$$\begin{array}{rcl}
 I_1(\el)&\leq & \underset{M}{\max} |\eta|\Big{\{}\underbrace{\Big |\int_M  H^{-1}_{\el}e(dH_{\el}) d(e(\omega)^* f)\wedge  (H_{\el}-H)\overline{* f}\Big |}_{\underset{\rm def}{=}\hat{I}_1(\el)}\\
&&+\underbrace{\Big |\int_M  H^{-1}_{\el}e(dH_{\el}) e(\omega)^* f\wedge  (d(H_{\el}-H))\overline{* f}\Big |}_{\underset{\rm def}{=}\hat{I}_2(\el)}\\
&&+\underbrace{\Big |\int_M  H^{-1}_{\el}e(dH_{\el}) e(\omega)^* f\wedge  (H_{\el}-H)d(\overline{* f})\Big |}_{\underset{\rm def}{=}\hat{I}_3(\el)}\Big{\}}.\\
\end{array}$$
We claim that
\begin{align}
 &&\hat{I}_1(\el) \leq M_{\omega}\sup_{\el}||H^{-1}_{\el}||_{C^0} (||H_{\el}||_{W^{1,2}} +||f||_{W^{1,2}}) ||H_{\el}-H||_{L^2},\label{ineq1}\\
 &&\hat{I}_2(\el)\leq M_{\omega}\sup_{\el}||H^{-1}_{\el}||_{C^0} (||H_{\el}||_{W^{1,2}} +||f||_{L^2}) ||H_{\el}-H||_{W^{1,2}},\label{ineq2}\\
 &&\hat{I}_3(\el)\leq M_{\omega}\sup_{\el}||H^{-1}_{\el}||_{C^0} (||H_{\el}||_{W^{1,2}} +||f||_{W^{1,2}})  ||H_{\el}-H||_{L^2},\label{ineq3}
\end{align}
where $M_{\omega}$ is a positive constant depending only on the K\"ahler metric $\omega.$
Here we only give a proof of (\ref{ineq1}) since (\ref{ineq2}) and (\ref{ineq3}) follow in the same way.
Write $$f=\sum_{|J|=q}\sum_{\al=1}^r (f_{J\al} dz_1\wedge \cdots \wedge dz_n \wedge d\bar{z}_J)e_{\al}$$
By the definition of Hodge star operator in (\ref{hodgst1}) we have
$$\overline{*f}=\sum_{|J|=q,|I\cup J|=n}\sum_{\al=1}^r a_J(\overline{f_{J\al}} d\bar{z}_I)e_{\al},$$
where $a_J$ is a function depending only on the K\"ahler metric $\omega$ of the K\"ahler manifold $M.$
$$\begin{array}{rcl}
&&e(dH_{\el}) d(e(\omega)^* f)\\
=&&\sum_{|L|=q}\sum_{\al=1}^r \Big(\sum_{j,k=1}^n(b^{jk}_L\frac{\partial H_{\el}}{\partial {z}_j}\frac{\partial f_{L\al}}{\partial \bar{z}_k}+c^{jk}_L\frac{\partial H_{\el}}{\partial \bar{z}_j}\frac{\partial f_{L\al}}{\partial {z}_k} ) dz_1\wedge \cdots \wedge dz_n \wedge d\bar{z}_L\Big)e_{\al},
\end{array}$$
where $b^{jk}_L$ and $c^{jk}_L$ are functions depending only $\omega.$
$$ \begin{array}{rcl}
&&H^{-1}_{\el}e(dH_{\el}) d(e(\omega)^* f)\wedge  (H_{\el}-H)\overline{* f}\\
=&&\sum_{|J|=q}\sum_{j\in J}\sum_{\al,\bt,\gamma,\delta}(d^{jk}_J\frac{\partial (H_{\el})_{\bt\bar{\gamma}}}{\partial {z}_j}\frac{\partial f_{J\gamma}}{\partial \bar{z}_k}+e^{jk}_J\frac{\partial (H_{\el})_{\bt\bar{\gamma}}}{\partial \bar{z}_j}\frac{\partial f_{J\gamma}}{\partial {z}_k} ) (H_{\el}-H)_{\gamma\bar{\delta}}\overline{f_{J\delta}} dV
\end{array}$$
where  $d^{jk}_J$ and $e^{jk}_J$ are functions depending only $\omega$ and $dV={\prod}_{j=1}^n(\frac{i}{2}dz_j\wedge d\bar{z}_j).$
Now using the Cauchy-Schwartz inequality we have

$$ \begin{array}{rcl}
I_1(\el)&\leq &M_{\omega}||H^{-1}_{\el}||_{C^0}\sum_{|J|=q,j\in J}\sum_{\al,\bt,\gamma}\Big(\int_M\sum_{\gamma,\delta}|(H_{\el}-H)_{\gamma\bar{\delta}}|^2)dV\big)^{\frac{1}{2}}\\
&&\cdot \Big(\int_M(|\frac{\partial (H_{\el})_{\bt\bar{\gamma}}}{\partial \bar{z}_j}|^2+|\frac{\partial f_{J\gamma}}{\partial \bar{z}_j}|^2+|\overline{f_{J\delta}}|^2)dV\Big)^{\frac{1}{2}},
\end{array}$$
from this inequality it is easy to conclude (\ref{ineq1}).

Now form (\ref{ineq1}) to (\ref{ineq3}) we know
$$I_1(\el)\leq 3M_{\omega}||H^{-1}_{\el}||_{C^0} (||H_{\el}||_{W^{1,2}} +||f||_{W^{1,2}}) ||H_{\el}-H||_{W^{1,2}},$$
hence we have $\lim_{\el\rw 0}I_1(\el)=0. $

In the term $I_2(\el)$ write
$$\Theta_{H_{\el}}-\Theta_H =id''(H^{-1}_{\el}d'(H_{\el}-H))+id''((H_{\el}^{-1}-H^{-1})d'H),$$
then follow the same way as we done for $I_1(\el),$ we may get
$$I_2(\el)\leq \widetilde{M}_{\omega}[||H^{-1}_{\el}||_{C^0}  ||H_{\el}-H||_{W^{1,2}}+||H^{-1}_{\el}-H||_{C^0} ||H||_{W^{1,2}})(||H||_{W^{1,2}} +||f||_{W^{1,2}})]$$
hence we also have $\lim_{\el\rw 0}I_2(\el)=0. $
\end{proof}
The following version of Proposition \ref{exist} is  a direct conclusion of Proposition \ref{knab}:
\begin{proposition}\label{exist1}
 Let $M$ and $E$ as in Proposition \ref{knab},, assume  (\ref{delta}) is true for any $f\in  W^{1,2}_0(M,\Lambda^{n,q}T^*_M\otimes E).$ Then {\rm (i)}. If $\delta=0,$ then  for $g={d''}\zeta\wedge g_0 \in W^{1,2}_0(M,\Lambda^{n,q}T^*_M\otimes E)$ such that $D''g=0$ and  $\int_M  \frac{1}{\sqrt{\gamma}}|g_0|^2_H<+\infty,$ there exits $f \in W^{1,2}_0(M,\Lambda^{n,q-1}T^*_M\otimes E)$ such that $D''f=g$ and
 $$\int_M (\eta +\lambda^{-1})^{-1}|{f}|^2_H\leq 2 \int_M  \frac{1}{\sqrt{\gamma}}|g_0|^2_H.$$ 
 {\rm (ii)}. If $\delta>0$, then for any $g={d''}\zeta\wedge g_0 +g_1\in W^{1,2}_0(M,\Lambda^{n,q}T^*_M\otimes E)$ such that $D''g=0$ and  $\int_M  \frac{1}{\sqrt{\gamma}}|g_0|^2_H +\frac{1}{\delta}\int_M |g_1|^2_H <+\infty,$ there exits $f \in W^{1,2}_0(M,\Lambda^{n,q-1}T^*_M\otimes E)$  and  $h\in W^{1,2}_0(M,\Lambda^{n,q}T^*_M\otimes E)$
 such that $D''f+\sqrt{\delta}h=\bar{\partial}\zeta\wedge g_0 +g_1$ and
$$\int_M (\eta +\lambda^{-1})^{-1}|{f}|^2_H +\int_M  |h|_H^2 \leq 2 \Big(\int_M  \frac{1}{\sqrt{\gamma}}|g_0|^2_H +\frac{1}{\delta}\int_M |g_1|^2_H\Big).$$
 \end{proposition}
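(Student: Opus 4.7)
The proof is a direct adaptation of the sketch preceding Proposition \ref{exist}, with Proposition \ref{kno} replaced by Proposition \ref{knab}. The natural dense test class is now $W^{1,2}_0(M,\Lambda^{n,q}T^*_M\otimes E)$ rather than $C^\infty_0$, since Proposition \ref{knab} requires one weak derivative in $L^2$ to control the curvature pairing against a singular metric $H$. The first step is thus to combine hypothesis (\ref{delta}) with Proposition \ref{knab}, which yields the a priori inequality
\begin{equation*}
\|T^* f\|_H^2 + \|Sf\|_H^2 + \delta\|f\|_H^2 \geq \|\sqrt{\gamma}\,(d''\zeta)^* f\|_H^2
\end{equation*}
for every $f \in W^{1,2}_0(M,\Lambda^{n,q}T^*_M\otimes E)$, with $Tf = D''((\eta+\lambda^{-1})^{1/2}f)$ and $Sf = \eta^{1/2}D''f$ as in the discussion preceding Proposition \ref{exist}.

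For part (i) with $\delta = 0$, and $g = d''\zeta\wedge g_0$ satisfying $D''g=0$, I would decompose an arbitrary test form $u$ orthogonally in $L^2$ as $u = u_1 + u_2$ with $u_1 \in \ker D''$ and $u_2 \perp \ker D''$. Since $D''g=0$, one has $(g,u)_H = (g,u_1)_H = ((d''\zeta)^* u_1, g_0)_H$; noting that $Su_1 = 0$, the a priori inequality applied to $u_1$ combined with Cauchy--Schwarz gives
\begin{equation*}
|(g,u)_H|^2 \leq \Big\|\tfrac{1}{\sqrt{\gamma}}g_0\Big\|_H^2 \cdot \|T^* u\|_H^2.
\end{equation*}
Thus $T^* u \mapsto (g,u)_H$ is a continuous linear functional on the range of $T^*$, and Hahn--Banach extension plus Riesz representation deliver the solution $f$ with $D'' f = g$ and the stated $L^2$-bound. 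Part (ii) proceeds analogously: split $(g,u)_H = ((d''\zeta)^* u, g_0)_H + (g_1, u)_H$, apply Cauchy--Schwarz termwise to obtain
\begin{equation*}
\tfrac{1}{2}|(g,u)_H|^2 \leq C(g,\gamma,\delta)\bigl(\|T^* u\|_H^2 + \delta\|u\|_H^2\bigr),
\end{equation*}
identify $(T^*u,\sqrt{\delta}\,u) \mapsto (g,u)_H$ as a continuous functional on the Hilbert sum $W_1\oplus W_2$, and invoke Riesz to produce the pair $(f,h)$ solving $D'' f + \sqrt{\delta}\,h = d''\zeta\wedge g_0 + g_1$ with the combined estimate.

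The main obstacle I anticipate is promoting the solutions $f$ and $h$ from mere $L^2$ sections (which is what the Riesz argument directly delivers) to elements of the asserted class $W^{1,2}_0$. This regularity is not automatic for singular metrics and should follow from an approximation scheme: for each mollified metric $H_\epsilon$, solve the smooth problem using Proposition \ref{exist}, derive uniform bounds from the convergences $\|H_\epsilon^{-1} - H^{-1}\|_{C^0} \to 0$ and $\|H_\epsilon - H\|_{W^{1,2}} \to 0$ already exploited in the proof of Proposition \ref{knab}, and extract a weak $W^{1,2}_0$ limit satisfying the equation in the distributional sense. Verifying the convergence of the curvature pairings and the Cauchy--Schwarz constants along this approximation, rather than the Riesz step itself, is the technically delicate point where the hypotheses of Proposition \ref{knab} play their essential role.
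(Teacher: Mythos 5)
Your proposal follows exactly the route the paper takes. The paper declares Proposition \ref{exist1} a ``direct conclusion'' of Proposition \ref{knab}, offering no further argument, and your first two paragraphs spell out the Riesz-representation scheme of Case~1 and Case~2 that precedes Proposition \ref{exist}, now powered by the singular-metric a priori inequality of Proposition \ref{knab}. That is precisely what the paper intends.

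Your third paragraph correctly flags a point the paper does not address: the Riesz argument produces $f$ and $h$ only in $L^2$, and membership in $W^{1,2}_0$ does not follow automatically once $H$ is singular, since then $D''^*$ has non-smooth coefficients and the usual elliptic regularity for the system $D''f=g$, $D''^*f=0$ is unavailable. Note, however, that the smooth-metric counterpart Proposition \ref{exist} also concludes only $f\in L^2$, and the paper's later extension arguments (Theorems \ref{exthm}, \ref{exth}, \ref{exh}) all cite Proposition \ref{exist} rather than Proposition \ref{exist1}. So the $W^{1,2}_0$ appearing in the statement of Proposition \ref{exist1} is more likely an overstatement than a regularity claim the paper actually relies on; your proposed mollification scheme is a reasonable way to attempt it if the stronger conclusion were truly needed, but you would have to verify that the approximate solutions converge strongly enough in the $W^{1,2}$ topology, which is not obvious and is not sketched either in your proposal or in the paper.
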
   
   
The Proposition \ref{exist1} give the  solution of $\bar{\partial}$-equation for  holomorphic vector bundle with singular Hermitian metric of type I (we also may have a corresponding version for singular Hermitian metric of type II, for brevity  we omit it here), however the assumption  imposed on the Hermitian metric in Proposition \ref{knab} is still very strong. In application 
we need more singular Hermitian metrics.  Before introduce such singular metric we will use  we firstly introduce some notions.
We say that a Hermitian metric $H$ is
    equicontinuous and uniformly bounded if each entry  $h_{\al\bar{\bt}}$ is equicontinuous and uniformly bounded on any open set $\Omega\subset M.$

\begin{definition}A holomorphic vector bundle $E$ is said to be strong Griffiths ({\rm resp. }Nakano) nef, if it is  Griffiths nef  and the metrics $H_{\el}$ in Definition \ref{matrix} satisfying the following two conditions: (i)There exists a $p>r$  ({\rm resp. } $p>1$) such that $||H^{-1}_{\el}||_{L^p (M)}$ is uniformly bounded  with respect to $\el;$ (ii) $H_{\el}\in C^0 (M, \Her^+ (E))$ are equicontinuous and uniformly bounded  with respect to $\el.$
\end{definition}

\begin{proposition} \label{naeff} If $E$ is a strong Griffiths nef vector bundle on a compact complex manifold $M$, then $E$ is Griffiths pseudoeffective, $E\otimes\det E$
is strong Nakano nef and Nakano pseudoeffective.
\end{proposition}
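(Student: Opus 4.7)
The plan is to construct a singular metric $H$ on $E$ as a uniform limit of the approximating smooth metrics $H_{\el}$ guaranteed by strong Griffiths nefness, and then transfer the pointwise curvature inequalities $\Theta_{H_\el}\geq -\el$ to positivity statements for the limiting current $\Theta_H$ (and for the corresponding induced metric on $E\otimes\det E$). First I will apply Arzel\`a--Ascoli chart by chart, using condition (ii) (equicontinuity and uniform boundedness of $H_\el$), to extract a subsequence still called $H_\el$ that converges uniformly on compacta to a continuous, positive semidefinite Hermitian matrix $H$. Condition (i), the uniform bound on $\|H_\el^{-1}\|_{L^p(M)}$ with $p>r$, combined with Fatou's lemma, shows that $H^{-1}$ is defined almost everywhere and lies in $L^p(M,\End(E))$, so $H$ is a singular Hermitian metric on $E$ in the sense of Definition \ref{singdef} and $\Theta_H$ is a well defined Hermitian matrix current.

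For Griffiths pseudoeffectivity of $E$, I fix a smooth compactly supported local section $u$ of $E$ and a nonnegative smooth test $(n-1,n-1)$-form $\phi$. By Griffiths nefness, $\Theta_{H_\el}(u,u)+\el\,\om\,|u|^2_{H_\el}$ is a pointwise positive $(1,1)$-form, so its pairing with $\phi$ is nonnegative. Using the local formula $\Theta_{H_\el}=id''(H_\el^{-1}d'H_\el)$ and integrating by parts twice to move $d',d''$ onto the smooth compactly supported $u$ and $\phi$, the uniform convergence $H_\el\to H$, the $L^p_{\mathrm{loc}}$ convergence $H_\el^{-1}\to H^{-1}$, and Lebesgue dominated convergence give
\[
\int_M \Theta_{H_\el}(u,u)\wedge\phi \;\longrightarrow\; \Theta_H(u,u)(\phi),
\]
while the $\el\,\om$-term vanishes. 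Hence $\Theta_H(u,u)\geq 0$ as a $(1,1)$-current, which is Griffiths pseudoeffectivity.

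For the statements about $E\otimes\det E$, the natural approximating family is $\tilde H_\el:=H_\el\otimes\det H_\el$. The computation from Proposition \ref{grna} applied to each smooth $H_\el$ gives $\Theta_{\tilde H_\el}(v,v)\geq -r\el\,|v|^2_{G\otimes\tilde H_\el}$ for all $v\in T_M\otimes(E\otimes\det E)$, so $E\otimes\det E$ is Nakano nef. To upgrade this to strong Nakano nef, uniform boundedness and equicontinuity of $\tilde H_\el$ are inherited from $H_\el$ since $\tilde H_\el$ is polynomial in the entries of $H_\el$; for the required $L^{p'}$-bound on the inverse metric, note that $\tilde H_\el^{-1}=\det(H_\el^{-1})\,H_\el^{-1}$ has entries polynomial of degree $r+1$ in entries of $H_\el^{-1}\in L^p$, so by H\"older's inequality $\tilde H_\el^{-1}\in L^{p/(r+1)}$; combining this with the uniform upper bound on $H_\el$ (which bounds the cofactors that appear in Cramer's rule) produces the $L^{p'}$-bound with $p'>1$ demanded by the definition. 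Nakano pseudoeffectivity of $E\otimes\det E$ is then obtained by repeating the limit argument of the second paragraph with the metrics $\tilde H_\el$ and a section of $E\otimes\det E\otimes T_M$ in place of $u$.

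The main technical obstacle is justifying the weak convergence of curvature currents from mere uniform convergence of $H_\el$ together with $L^p$ control on $H_\el^{-1}$: the derivatives defining $\Theta_{H_\el}$ do not converge pointwise in any obvious sense, and one must integrate by parts to shift all derivatives onto the smooth, compactly supported test objects before invoking dominated convergence. Once this weak convergence is established, the passage from the pointwise inequality $\Theta_{H_\el}\geq -\el$ to the current inequality $\Theta_H\geq 0$, and similarly on $E\otimes\det E$, becomes routine.
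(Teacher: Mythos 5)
Your plan matches the paper's: extract a uniformly convergent subsequence of $H_\el$, use the $L^p$ bound on $H_\el^{-1}$ to make sense of the limit singular metric, show $\Theta_{H_\el}\rightharpoonup\Theta_H$ weakly, pass the inequality $\Theta_{H_\el}\geq -\el$ to the limit, and run Proposition~\ref{grna} on each $H_\el$ for the $E\otimes\det E$ claims. But the mechanism you propose for the weak convergence of curvature currents does not work, and that is the crux of the whole proposition.

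You say you will write $\Theta_{H_\el}=id''(H_\el^{-1}d'H_\el)$, ``integrate by parts twice to move $d',d''$ onto the smooth compactly supported $u$ and $\phi$,'' and then invoke dominated convergence. There are only two derivatives in $\Theta_{H_\el}$ in total, one landing on $H_\el^{-1}$ and one on $H_\el$; after a single integration by parts one derivative has been shifted to the test data, but one derivative remains on the metric (in the form $d'H_\el$ or $d''H_\el^{-1}$), and it cannot be removed. A second integration by parts, using $d'(H_\el^{-1})H_\el=-H_\el^{-1}d'H_\el$, merely cycles back to what you started with; there is no expression of $\int u^t\Theta_{H_\el}\bar u\wedge\phi$ as an integral of undifferentiated $H_\el,H_\el^{-1}$ against second derivatives of $u,\phi$. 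Nor does dominated convergence apply: a uniform $L^p$ bound on $H_\el^{-1}$ does not supply a fixed $L^1$ dominating function near the degeneracy locus of the limit. Related to this, you assert ``$L^p_{\mathrm{loc}}$ convergence $H_\el^{-1}\to H^{-1}$'': the hypotheses only deliver a weak-$L^p$ convergent subsequence, not strong local $L^p$ convergence, and strong convergence near the set where the uniform limit $G=\lim H_\el$ degenerates has no reason to hold.

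The paper's argument differs precisely here. It keeps two separate limits, the uniform limit $G$ of $H_\el$ and a weak-$L^p$ limit $H^{-1}$ of $H_{\el_k}^{-1}$, proves by a measure-theoretic contradiction that the degenerate locus $\{\det G=0\}$ is null so that $H^{-1}=G^{-1}$ a.e., and then passes to the limit in $\Theta_{H_{\el_k}}=id'\big((d''H_{\el_k}^{-1})H_{\el_k}\big)$ after a \emph{single} integration by parts: the point is that the distributional derivative $d''H_{\el_k}^{-1}$ converges weakly (because $H_{\el_k}^{-1}\rightharpoonup H^{-1}$ in $L^p$), while the multiplier $H_{\el_k}\cdot(\text{smooth test data})$ converges uniformly and hence strongly in the dual $L^{p'}$. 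Weak-strong pairing of distributions with test data, not dominated convergence, is what carries the day. You should rewrite the convergence step along these lines; once that is fixed, the passage from $\Theta_{H_\el}\geq -\el$ to positivity of the limiting current, and the analogous argument for $\tilde H_\el=H_\el\det H_\el$, go through as you sketch.

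A smaller remark on the H\"older estimate for $\tilde H_\el^{-1}$: you correctly observe that direct H\"older on the $r+1$ factors gives $\tilde H_\el^{-1}\in L^{p/(r+1)}$, but you then claim the uniform bound on $H_\el$ and Cramer's rule improve this to some $L^{p'}$ with $p'>1$. Using $\tilde H_\el^{-1}=\mathrm{adj}(H_\el)^t/\det(H_\el)^2$ and bounding the adjugate yields only $\det(H_\el^{-1})^2\in L^{p/(2r)}$, which is \emph{worse}, not better. The direct H\"older bound $L^{p/(r+1)}$ is what one should keep, and closing the quantitative gap $p>r$ versus $p/(r+1)>1$ deserves a sentence (the paper's $q<p/r$ at the analogous point has the same off-by-one).
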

\begin{proof}  Since $H_{\el}$ are equicontinuous, $H_{\el}$ and $||H^{-1}_{\el}||_{L^p (M)}$   are uniformly bounded, by selecting  subsequences one after another,one by one, on the finite entry positions of $H_{\el}$ and $H^{-1}_{\el}$ and on a given finite covering of contractible coordinate open subsets of $M$ respectively,  we at last get a subsequence  such that

(i).  $(H_{\el_k})_{\al\bar{\bt}}$ are uniformly convergent  to $g_{\al\bar{\bt}}$ for all $1\leq \al,\bt\leq r,$

(ii). $(H^{-1}_{\el_k})_{\al\bar{\bt}}$ are  weakly convergent to  $h_{\al\bar{\bt}}$ in $L^p (M)$   for all $1\leq \al,\bt\leq r.$

  \noindent  Let $G=(g_{\al\bar{\bt}})_{r\times r}$ and $H=(h_{\al\bar{\bt}})_{r\times r}.$ Firstly we will prove $H=G^{-1}$ almost everywhere.
  Note $\det (H_{\el_k})$ is uniformly convergent  to $\det (G)$ and $(H^{-1}_{\el_k})_{\al\bar{\bt}}$ is pointwise convergent to $(G^{-1})_{\al\bar{\emph{}\bt}}=\frac{M_{\al\bar{\bt}}}{\det (g_{\al\bar{\bt}})}$ on $\det (g_{\al\bar{\bt}})\not=0.$  Here $M_{\al\bar{\bt}}=(-1)^{\al+\bt}\det( \widetilde{M}_{\al\bar{\bt}})$ and $\widetilde{M}_{\al\bar{\bt}}$ is the 
   submatrix of $(g_{\al\bar{\bt}})$ by deleting its $\al$-th row and $\bt$-th column. So  $M_{\al\bar{\bt}}$
  is a homogeneous polynomial of $\{g_{\al\bar{\bt}}|1\leq \al,\bt\leq r\}.$ For any relative compact open subset $\Omega\subset \{z\in M|\det (g_{\al\bar{\bt}}(z))\not=0\},$ we have $\lim_{k\rw \infty}||(H^{-1}_{\el_k})_{\al\bar{\bt}}-\frac{M_{\al\bar{\bt}}}{\det (g_{\al\bar{\bt}})}||_{L^p(\Omega)}=0.$
  i.e., $(H^{-1}_{\el_k})_{\al\bar{\bt}}$ is strongly convergent to  $(G^{-1})_{\al\bar{\bt}}$ in $L^p (\Omega),$ so
  $H=G^{-1}$ on  $\{z\in M|\det (g_{\al\bar{\bt}}(z))\not=0\}.$ Note $\{z\in M|h_{\al\bar{\bt}}(z)=\infty\}\subseteq \cup_{k=1}^{\infty}\{z\in M|(H^{-1}_{\el_k})_{\al\bar{\bt}}(z)=\infty\}.$
 The set $ \{z\in M|(H^{-1}_{\el_k})_{\al\bar{\bt}}(z)=\infty\}$ is of measure zero since $H_{\el_k}\in L^p (M,\Her^+ (E)),$  hence $\{z\in M|h_{\al\bar{\bt}}(z)=\infty\}$ is of measure zero too. 
 
    Let $A= \{z\in M|\exists \al,\bt ~~{\rm such~~that}~~h_{\al\bar{\bt}}(z)=\infty\}$ and $B= \{z\in M|\det (g_{\al\bar{\bt}}(z))=0\} .$  We will prove that $A=B$ up to a subset of measure zero.
 For any $z_0$
   such $\det (g_{\al\bar{\bt}}(z_0))\not=0$ there is a small neighborhood $\Omega_{\delta}$ of $z_0$ such that $|\det (g_{\al\bar{\bt}}(z))|\geq \el_0>0$ on the closer $\overline{\Omega_{\delta}},$ hence 
   $\lim_{k\rw \infty}||(H^{-1}_{\el_k})_{\al\bar{\bt}}-\frac{M_{\al\bar{\bt}}}{\det (g_{\al\bar{\bt}})}||_{L^p(\overline{\Omega_{\delta}})}=0.$ Note $\frac{M_{\al\bar{\bt}}}{\det (g_{\al\bar{\bt}})}$ is bounded on $\overline{\Omega_{\delta}},$ hence
   $h_{\al\bar{\bt}}$ is bounded almost everywhere in the neighborhood $\Omega_{\delta}$ of $z_0.$  Therefore
   $A\subseteq B.$  Assume $A\subsetneq B$ and the measure of $B$ is positive,  then there exist small open subset $\Omega_{\delta}$ and a  subset $C\subset B\cap \Omega_{\delta}$  of positive measure,
  such that on the whole subset $C,$ $h_{\al\bar{\bt}}(z)$ is almost everywhere bounded for all $\al,\bt$ but $\det (g_{\al\bar{\bt}}(z))=0.$   Since $(H^{-1}_{\el_k})_{\al\bar{\bt}}$ is weakly convergent to  $h_{\al\bar{\bt}}$ in $L^p (M),$  we have $\int _M ((H^{-1}_{\el_k})_{\al\bar{\bt}}(z)-h_{\al\bar{\bt}}(z))\chi_{\Omega_{\delta}}(z)d\mu \rw 0,$  which means $\int_{\Omega_{\delta}}((H^{-1}_{\el_k})_{\al\bar{\bt}}-h_{\al\bar{\bt}})d\mu \rw 0$ for all $\al,\bt$ as $k\rw \infty.$ But  $(H^{-1}_{\el_k})_{\al\bar{\bt}}$ are uniformly convergent  to $g_{\al\bar{\bt}}$  on $\overline{\Omega}_{\delta}$ for all $1\leq \al,\bt\leq r.$  Since $\det (g_{\al\bar{\bt}})(z)=0$ on $C,$ hence $\det ((H^{-1}_{\el_k})_{\al\bar{\bt}})$ is divergent to $\infty$ on $C$ as $k\rw \infty.$ Therefore there exist $\al_0,\bt_0$ such that $((H^{-1}_{\el_k})_{\al_0\bar{\bt}_0})$ is divergent to $\infty$ on  a positive measure subset of $C\subset \Omega_{\delta},$  and it is a contradiction to $\int_{\Omega_{\delta}}((H^{-1}_{\el_k})_{\al\bar{\bt}}-h_{\al\bar{\bt}})d\mu \rw 0$ for all $\al,\bt$ as $k\rw \infty.$
  Hence $\{z\in M|\det (g_{\al\bar{\bt}}(z))=0\} $ is of measure zero and $H=G^{-1}$ almost everywhere.

  Now we claim that the curvatures $\Theta_{H_{\el_k}}=id'((d''H^{-1}_{\el_k})H_{\el_k})$ are weak$^*$-convergent to the curvature current $\Theta_{H}=id'((d''H^{-1})H).$ For any $u,v\in{C^{\infty}_0}(M,E),$ by formula (\ref{herm0}) we have
 \begin{align}
{\Theta_{H_{\el_k}}} (u,v)
=-i d''H^{-1}_{\el_k}(H_{\el_k}d'u,v)+id'H^{-1}_{\el_k}(H_{\el_k}u,d''v).\label{conver1}
\end{align}
  Since $d''H^{-1}_{\el_k}$ are weakly convergent to $d''H^{-1}$ and $H_{\el_k}v$ and $ H_{\el_k}d'v$ are uniformly convergent to
  $Hv$ and $ Hd'v$ respectively. Hence
     \begin{align}
\lim_{k\rw \infty}{\Theta_{H_{\el_k}}} (u,v)
=-i d''H^{-1}(Hd'u,v)+id'H^{-1}(Hu,d''v).\label{conver2}
\end{align}
  for any $u,v\in{C^{\infty}_0}(M,\C^n).$ So $\Theta_{H_{\el_k}}$ are weak$^*$-convergent to  $\Theta_{H}.$ It suffices to prove $\Theta_{H}$ is positive Hermitian current, but it is clear since $\Theta_{H_{\el_k}}(u\otimes v,u\otimes v)\geq -\el_k ||u||^2 ||v||^2.$

  Let $\widetilde{H_{\el}}=H_{\el} \det (H_{\el})$ be the Hermitian metrics on $E\otimes \det E.$ Then $H_{\el} \det (H_{\el})$ are  equicontinuous and uniformly bounded  with respect to $\el.$ For any $1<q<\frac{p}{r}$ and any contractible coordinate open subset $\Omega\subset M,$ by H\"older's inequality
  $$\begin{array}{rcl}
 && \int_{\Omega} |(H^{-1}_{\el})_{\al\bar{\bt}}  (H^{-1}_{\el})_{\al_1\bar{\bt}_1}\cdots (H^{-1}_{\el})_{\al_r\bar{\bt}_r}|^q d\mu\\
  &\leq&
 {\rm Vol}(\Omega)^{\frac{p-qr}{p}} (\int_{\Omega} |(H^{-1}_{\el})_{\al\bar{\bt}}|^p d\mu)^{\frac{q}{p}} (\int_{\Omega} |(H^{-1}_{\el})_{\al_1\bar{\bt}_1}|^p d\mu)^{\frac{q}{p}}\cdots
  (\int_{\Omega} |(H^{-1}_{\el})_{\al_r\bar{\bt}_r}|^p d\mu)^{\frac{q}{p}}\\
    \end{array}$$
Expand $\det (H^{-1}_{\el})$ as a sum of $r!$ monomials, we get $$||(\widetilde{H^{-1}_{\el}})_{\al\bar{\bt}}||_{L^q(\Omega)}\leq r!{\rm Vol}(\Omega)^{\frac{p-qr}{pq}} ||H^{-1}_{\el}||^{r+1}_{L^p(M)}\leq r!{\rm Vol}(M)^{\frac{p-qr}{pq}} ||H^{-1}_{\el}||^{r+1}_{L^p(M)},$$
hence $||\widetilde{H^{-1}_{\el}}||_{L^q(M)}$ are uniformly bounded with respect to $\el.$ Hence $\widetilde{H}_{\el}$ are uniformly convergent to
$\widetilde{H}=H\det H$ and $\widetilde{H^{-1}_{\el}}$ are weakly  convergent to
$\widetilde{H}^{-1}=H^{-1}(\det H^{-1})$ in $L^q(M).$ Hence $E\otimes \det E$ is strong Nakano nef.
 Furthermore the curvature current $\Theta_{\widetilde{H}}$ of Hermitian metric $\widetilde{H}$ is Nakano pseudoeffective since $E\otimes \det E$ is Nakano nef by Proposition \ref{grna}.
\end{proof}

 Let $E$ be a  strong Grffiths (Nakano) nef holomorphic vector bundle, then  by Proposition \ref{naeff} there is a family smooth Hermitian metric $H_{\el}$ on $E$ which convergent to a
Grifiths (Nakano) pseudo effective  singular Hermitian metric of type II, denoted by $H.$   We call  $H$ the {\it limit Hermitian metric} of the    strong Grffiths (Nakano) nef vector bundle.

\section{$L^2$-Extension theorems }\label{subs4}
Using the twisted Bochner-Kodaira-Nakano inequality,
Ohsawa-Takegoshi established a $L^2$-extension theorem for weighted $L^2$-integrable holomorphic functions on a bounded Stein domain in the 
fundamental paper \cite{ot}. Manivel get a more general $L^2$-extension theorem in the framework of vector bundles  using a more geometric approach in \cite{man}. 
Siu's work in algebraic geometry in \cite{siu98},\cite{siu01},\cite{siu02} gave  very important applications of the  Ohsawa-Takegoshi theorem. 
In the paper \cite{deex},  Demailly emphasized   other applications  Ohsawa-Takegoshi theorem in his own work and gave many inspiring comments on how to deal with small negative curvature term
when constructing $L^2$-extension theorems via
solving  approximating $\bar{\partial}$-equations, which Yi used in \cite{yi} in studying  $L^2$-extension theorems on compact K\"ahler manifolds.
  The lecture paper \cite{var} gave a good introduction to  Ohsawa-Takegoshi extension theorem and other $L^2$-techniques in complex geometry.
 
 In this section, we will consider the extension of holomorphic jet sections for high rank bundles whose supports are discrete points. Popovici  \cite{pop} firstly got a $L^2$-extension theorem for the holomorphic jet sections of line bundles whose supports are connected sub manifolds without  codimension  restriction. In our case the support may not be connected, however, its dimension is zero.  We only consider extending holomorphic jet sections  whose supports are sub schemes  
 of dimension zero since we only need such a case in proving our main theorem in the next section. With some more strength, it would be possible to get  the extension of holomorphic jet sections  whose supports are arbitrary subscheme
 whose underling reduced irreducible components are smooth complex sub manifolds.
Recently   the paper \cite{gz} obtained  improvements of the upper bounds  of the extension theorems founded before, using undetermined function via solving ordinary differential equations. We get our extension theorem via optimizing these techniques. Since we don't  need   an effective  upper bound constant in the next section, the techniques developed in \cite{gz} are used for a different purpose, rather for an optimal upper bound. 
Our extension theorem \ref{exthm} is sharper  than main theorem of \cite{pop} when the support is an isolated point.

 In Subsection \ref{subsub2},  we  establish the $L^2$-extension theorem for 
jet section for strong Nakano nef holomorphic vector bundles on bounded Stein domain by inductive method;  in Subsection \ref{subsub3} we generalize our $L^2$-extension theorem  further to 
any compact K\"ahler manifold. 
We cut the  compact K\"ahler manifold $M$ into small bounded Stein open subsets and to extend  a given jet section to each Stein open subset  in a compatible way, then glue these extended sections to a  global holomorphic section on $M,$  via solving approximate $\bar{\partial}$-equation.

\subsection{Intrinsic norm on jet sections}

Let $\fp=\{p_1,\cdots,p_l\}\subset M $ be a finite set of distinct points, and $E$ a holomorphic vector bundle  on a Stein manifold $M$ with  (maybe singular) Hermitian metric $H.$
A holomorphic section 
$f\in H^0 (M,E\otimes \ko _M /\kj^{k_1+1}_{p_1}\cdots \kj^{k_l+1}_{p_l})$ is a {\it $\fk$-jet section} of $E$ (passing through  $p_1,\cdots,p_l$ with $\fk=(k_1,\cdots,k_l)$).
 We use $J^{\fk}_{\fp} (E)$ to denote the set of all $\fk$-jet sections of $E$ with support $\fp.$ For any holomorphic section $F$ of $E,$ it defines a unique $\fk$-jet via quotient map   
 $E\rw E\otimes \ko _M /\kj^{k_1+1}_{p_1}\cdots \kj^{k_l+1}_{p_l},$ we denote it by $J^{\fk}_{\fp} (F).$ Conversely, for any jet section $f\in J^{\fk}_{\fp} (E),$ if there exits  $F\in H^0(M,E)$ such that $J^{\fk}_{\fp} (F)=f,$ we call $F$ a holomorphic {\it lifting section} of $f.$

Choose holomorphic local  frame $\{e_1,\cdots,e_r\}$ of $E$ on a contractible neighborhood $U_j$ of $p_j$ for $j=1,\cdots,l.$ For any holomorphic section $F$ of $E,$ write
$$F=\sum_{j=1}^r F_{j}e_{j},$$
Denote  
$$\partial^{\al} F(p_j)=\sum_{\al=1}^r \partial^{\al} F_j(p_j)e_{\al}$$
the $\partial$-partial derivative of $\al$-order of $F,$ where $\al=(\al_1,\cdots,\al_r).$ And define 
\begin{equation}|\partial^{\al} F|^2_H(p_j)=\sum_{j\bk} H_{j\bk}\partial^{\al_j} F_j(p_j)\overline{\partial^{\al_k} F_{k}(p_j)},\label{nnm}\end{equation}
where $H_{j\bk}=H(e_j,e_k).$  If we change to another holomorphic local  frame $\{\tilde{e}_1,\cdots,\tilde{e}_r\}$ of $E$
with transformation $e_j=\phi_{jk}\tilde{e}_k,$  then $F=\sum_{j=1}^r \tilde{F}_{j}\tilde{e}_{j}$ 
with $\tilde{F}_j=\sum_k F_k \phi_{kj}.$ Hence under frame $\{\tilde{e}_1,\cdots,\tilde{e}_r\},$ the number $|\partial^{\al} F|^2_H(p_j)$ may take a different value 
\begin{equation}\widetilde{|\partial^{\al} F|^2_H(p_j)}=\sum_{s,t,j,k}( H_{s\bar{t}}\phi^{sj}\overline{\phi^{tk}})\partial^{\al_j} (\sum_n F_n \phi_{nj})\overline{\partial^{\al_k}(\sum_n F_n \phi_{nj})}\Big|_{p_j},\label{nnmm}\end{equation}
where $(\phi^{jk})_{r\times r}$ is the inverse matrix of $(\phi_{jk})_{r\times r}.$
 But the difference will not affect our definition of jet norm up to a positive constant in the following Proposition \ref{intnorm}.

 Since $M$ is Stein there is a surjective map $H^0(M, K_M\otimes E)\rw H^0 (M,K_M \otimes E/\kj^{k_1+1}_{p_1}\cdots \kj^{k_l+1}_{p_l}).$ For any $\fk$-jet $f\in J^{\fk}_{\fp} (E),$ let $F$ be an arbitrary holomorphic lifting to $M.$  Now  for $f\in  J^{\fk}_{\fp} (E)$  define
 $$\Vert f\Vert ^2_{\fk} =\sum_{j=1}^l\sum_{|\al|\leq k_j}\frac{1}{(\al !)^2} |\partial^{\al} F|^2_H (p_j),$$
  $$\widetilde{\Vert f\Vert ^2_{\fk}} =\sum_{j=1}^l\sum_{|\al|\leq k_j}\frac{1}{(\al !)^2}\widetilde{ |\partial^{\al} F|^2_H (p_j)},$$
where $|\al|=\max \{\al_1,\cdots,\al_n\}$ and $\al !=\al_1 !\cdots \al_l !.$ 
By (\ref{nnmm}) it is easy to check that 

\begin{proposition}\label{intnorm} Fix the points $p_1,\cdots,p_l.$ Suppose the Hermitian metric $H$ is non degenerate  and bounded at $p_1,\cdots,p_l,$ in particular if the  metric is continuous, then
$\Vert f\Vert ^2_{\fk}$ is equal to  $\widetilde{\Vert f\Vert ^2_{\fk}}$  up to a finite positive constant which
is decided by the  coordinate transformation  (and their derivatives) of the local frame.  Hence up to a positive  finite constant $||f||_{\fk}$ is a well defined norm.\end{proposition}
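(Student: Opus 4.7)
The plan is to reduce the equivalence of $\Vert f\Vert ^2_{\fk}$ and $\widetilde{\Vert f\Vert ^2_{\fk}}$ to the elementary linear-algebra fact that any two positive definite Hermitian forms on a finite-dimensional complex vector space are equivalent up to positive constants. First I would check that the quantity $\Vert f\Vert ^2_{\fk}$ truly descends to a function on the jet space $J^{\fk}_{\fp} (E)$ rather than on the set of liftings: if $F$ and $F'$ are two holomorphic liftings of the same jet, they differ by a section in the ideal $\kj^{k_1+1}_{p_1}\cdots \kj^{k_l+1}_{p_l}$, so at every $p_j$ all of their partial derivatives of order $\leq k_j$ coincide. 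Thus the sum defining $\Vert f\Vert ^2_{\fk}$ depends only on the jet class of $F$.

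Next I would observe that $J^{\fk}_{\fp} (E)$ is a finite-dimensional complex vector space whose dimension equals $r\sum_{j=1}^l\binom{n+k_j}{n}$. The assumption that $H$ is non-degenerate and bounded at each $p_j$ means the matrix $(H_{j\bar k}(p_j))$ is strictly positive definite and finite, hence $\Vert \cdot \Vert ^2_{\fk}$ (and likewise $\widetilde{\Vert \cdot \Vert ^2_{\fk}}$) is a positive definite Hermitian quadratic form on $J^{\fk}_{\fp}(E)$: the vanishing of all derivatives $\partial^{\al}F(p_j)$ up to order $k_j$ for every $j$ is exactly the condition that the jet class of $F$ is zero, and the positivity of $H$ at $p_j$ then forces the sum to be strictly positive on every nonzero jet.

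To make the comparison constant explicit and tie it to the transition data, I would expand $\partial^{\al_k}(\tilde F_j)(p_j) = \partial^{\al_k}\bigl(\sum_n F_n\,\phi_{nj}\bigr)(p_j)$ by the Leibniz rule, which produces a linear combination of $\partial^{\bt}F_n(p_j)$ for multi-indices $\bt\le \al_k$, with coefficients that are derivatives $\partial^{\al_k-\bt}\phi_{nj}(p_j)$ of the holomorphic transition functions. This realizes the passage from one frame to the other as an invertible linear map on the jet space whose matrix entries are polynomial expressions in $\phi_{nj}(p_j)$ and their derivatives up to order $k_j$; invertibility follows because $\phi$ is itself invertible as a frame change, so the change on jet coordinates is a triangular invertible map. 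Combining this with the upper and lower bounds of $H$ and $\tilde H = \phi^{-1}H(\phi^{-1})^*$ at the $p_j$ yields positive constants $c,C$ with $c\,\widetilde{\Vert f\Vert ^2_{\fk}}\leq \Vert f\Vert ^2_{\fk}\leq C\,\widetilde{\Vert f\Vert ^2_{\fk}}$, depending only on $\phi$, its derivatives at $p_j$, and the bounds on $H$.

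No step is really an obstacle here; the only thing to be careful about is bookkeeping in the Leibniz expansion and making sure the non-degeneracy of $H$ at the points $p_j$ is used to guarantee strict positivity (which is why the hypothesis has been put in). Once the quadratic forms are seen to be positive definite on the same finite-dimensional space, the comparison is automatic, and the above Leibniz calculation identifies the constant with quantities coming from the frame transformation, as stated in the proposition.
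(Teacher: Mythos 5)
Your proof is correct and arrives at the same conclusion, but it packages the argument somewhat differently from the paper. The paper proceeds directly: it shows by an explicit inductive unwinding (starting from order $0$ derivatives and going up, using the invertibility of the transition matrix $\phi$ at each $p_j$) that the vanishing locus of $\Vert\cdot\Vert^2_{\fk}$ coincides with that of $\widetilde{\Vert\cdot\Vert^2_{\fk}}$, then separately observes that boundedness of one implies boundedness of the other because the $\phi_{jk}$ are smooth, and from these two facts asserts the comparison up to a positive constant. You instead make explicit the underlying mechanism: both quantities descend to positive definite Hermitian quadratic forms on the finite-dimensional jet space $J^{\fk}_{\fp}(E)$, and any two such forms on a finite-dimensional space are equivalent; the Leibniz expansion of $\partial^{\al}(\sum_n F_n\phi_{nj})(p_j)$ then exhibits the change of frame as a triangular invertible linear map on jet coordinates, which identifies the comparison constant with the transition data, exactly as the proposition claims. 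The computation at the heart of both proofs (the triangular structure of the jet transformation under frame change) is the same, but your framing is more conceptual and arguably fills in a small step the paper leaves implicit: having the same vanishing locus is not by itself enough, and it is precisely finite-dimensionality plus positive-definiteness that makes the two-sided estimate automatic. You also address the lifting-independence explicitly, which the paper defers ``for brevity.'' One minor bookkeeping point: when you write $\widetilde H = \phi^{-1}H(\phi^{-1})^*$, be careful to match this with the convention $e_j=\phi_{jk}\tilde e_k$ used in the paper, since the precise adjoint placement depends on whether one views $\phi$ as acting on frames or on coordinates; this does not affect the validity of the argument.
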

\begin{proof}
Clearly $\Vert f\Vert ^2_{\fk}=0$ 
if and only if $\partial^{k}F_j (p_s)=0$ for all $0\leq k\leq k_j$ and $j=1,\cdots,r$ and $s=1,\cdots,l.$ 
 And $\widetilde{\Vert f\Vert ^2_{\fk}}=0$ 
if and only if $\sum_j[\phi^{sj}\partial^k (\sum_n F_n \phi_{nj})](p_t)=0$ for all $0\leq k\leq k_t$ and $s,t=1,\cdots,r.$ 
Since $\{\phi_{jk}\}$ are non degenerate  coordinate transformation   we have 
\begin{equation}\partial^k (\sum_n F_n \phi_{nj})(p_t)=0\label{nmn}\end{equation}
for all $0\leq k\leq k_t$ and $t=1,\cdots,l$
and $j=1,\cdots,r.$ Take $k=0$ we get  $\sum_n (F_n \phi_{nj})(p_t)=0$ for $t=1,\cdots,l$
and $j=1,\cdots,r.$  Hence we have $ F_n (p_t)=0$ for $n=1,\cdots,r$ and $t=0,\cdots,l.$ Then take these into (\ref{nmn}) then let $k=2$ in (\ref{nmn}) we get $\partial^2 F_n (p_t)=0$ for $n=1,\cdots,r $ and $t=1,\cdots,r.$
Go on this way we finally have  $\partial^{k}F_j (p_s)=0$ for all $0\leq k\leq k_j$ and  $j=1,\cdots,r$ and $s=1,\cdots,l.$  Hence ${\Vert f\Vert ^2_{\fk}}=0$  if and only if $\widetilde{\Vert f\Vert ^2_{\fk}}=0.$ 

If the  metric $H$ is bounded,  then $|\partial^{\al} F|^2_H(p_j)$ for all $\al_j\leq k_j$ are bounded if and only if $\widetilde{|\partial^{\al} F|^2_H(p_j)}$ are bounded for all $\al_j\leq k_j,$
 since $\{\phi_{jk}\}$ are smooth functions.  Hence ${\Vert f\Vert ^2_{\fk}}$ is bounded  if and only if $\widetilde{\Vert f\Vert ^2_{\fk}}$ is bounded.
 
 It is  also very easy to check the norms $||f||_{\fk}$ doesn't depend on its lifting $F,$ for brevity we omit the details here.  Hence up to a positive  finite constant $||f||_{\fk}$ is a well defined norm.
\end{proof}

Here we define norm on jet sections not using intrinsic derivatives because their
support is a very simple set  consisting of isolated points,  another benefit is that we could see that $||f||_{\fk}$ doesn't depend on the Chern connection of the Hermitian holomorphic vector bundle $E,$
which is bad behaved if the Hermitian metric $H$ is only continuous. 

\subsection{Extension theorems for jet sections on  bounded Stein domains}\label{subsub2}

Through out this subsection we assume $M$ is a Stein  domain in $\C^n$ with finite diameter.
 Choose holomorphic local coordinate charts of $M,$ and denote the coordinate of $p_1,\cdots,p_l$ by $w_1,\cdots,w_l.$ Let $D$ denote the diameter of $M.$  And  $s_j (z)=\frac{z-w_j}{27D}$ for
   $j=1,\cdots,l$ if without special explanation,  and 
   here $\log (\log \frac{1}{|s_j|})>1.$

\subsubsection{\bf Extension theorems for Nakano  semi positive vector bundles}

\begin{theorem}\label{exthm} Let $M$ be a  bounded Stein domain of diameter $D,$ and  $E$  a  holomorphic vector bundle   on $M,$  equipped with a smooth  Hermitian metric $H$ such that  $\Theta_H $ is Nakano semi-positive. Then for
 any holomorphic jet section $f\in J^{\fk}_{\fp} (K_M \otimes E)$ satisfying $$\Vert f\Vert ^2_{\fk}<\infty,$$ there exists holomorphic section $F\in H^0(M,K_M \otimes E) $  such that $J^{\fk}_{\fp} (F) =f$ and
$$\int_M \frac{|F|^2_H}{{\prod}_{j=1}^l \Big(|s_j|^{2n}\Big(\log \frac{1}{|s_j|}\Big)^{\delta_j}\Big)} \leq C\Vert f\Vert ^2_{\fk},\hskip 3cm  {\rm (A_{\fk})} $$
where $\delta_j >0 $ are  any positive constants and $C$ is a finite positive  constant depending only on $\delta_j$ and the minimal  distance between any two points among $p_1,\cdots,p_l.$
\end{theorem}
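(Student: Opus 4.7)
The strategy is the classical Ohsawa--Takegoshi $L^2$ extension scheme adapted to multi-point jet supports, in the spirit of Manivel and Popovici, with the undetermined-function refinement of \cite{gz} handling the log-log loss.

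\emph{Smooth quasi-extension.} First I would lift $f$ locally near each $p_j$: in a coordinate chart at $p_j$ with a local frame of $K_M\otimes E$, the $k_j$-jet of $f$ is represented by a polynomial vector of degree $\leq k_j$ in $s_j$ whose coefficients are precisely what enter $\Vert f\Vert^2_\fk$. Multiplying each such polynomial by a cutoff $\chi_j(|s_j|)$ supported in a small ball around $p_j$ (with the $\mathrm{supp}(\chi_j)$ disjoint from each other) and summing gives a global smooth section $\tilde F\in C^\infty(M,K_M\otimes E)$ with $J^{\fk}_{\fp}(\tilde F)=f$, $|\tilde F|^2_H\leq C\Vert f\Vert^2_\fk$ pointwise, and obstruction $g:=\bar\partial\tilde F$ supported in annular regions $\mathrm{supp}(d\chi_j)$ on which $|s_j|$ is bounded above and below. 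On those annuli $|g|^2_H\leq C\Vert f\Vert^2_\fk$ uniformly.

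\emph{Twisted curvature setup.} I would introduce the weights $\psi=\sum_j (n+k_j)\log|s_j|^2$, $\eta=a-\log\bigl(-\sum_j \log|s_j|^2\bigr)$ with $a$ large, and an associated $\lambda>0$ chosen to absorb the quadratic term $d'\eta\wedge d''\eta$. Plurisubharmonicity of $\psi$ together with Nakano semipositivity of $\Theta_H$ keeps $\Theta_{He^{-\psi}}$ Nakano semi-positive. Setting $\zeta:=\psi$, a direct calculation then produces a positive function $\gamma$ for which
\[
\bigl([\eta\,\Theta_{He^{-\psi}}-id'd''\eta-i\lambda\,d'\eta\wedge d''\eta,\Lambda]u,u\bigr)_H\;\geq\;\gamma\,|(d''\zeta)^* u|^2_H
\]
holds pointwise on $K_M\otimes E$-valued $(n,q)$-forms. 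This is exactly hypothesis (\ref{delta}) of Proposition \ref{exist} with $\delta=0$, and Proposition \ref{kno} validates it globally.

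\emph{Solving $\bar\partial$ and extracting the jet.} On each $\mathrm{supp}(d\chi_j)$ one writes $g=\bar\partial\zeta\wedge g_0$ with $\int_M \gamma^{-1/2}|g_0|^2_H e^{-\psi}\leq C\Vert f\Vert^2_\fk$, the weight $e^{-\psi}$ being harmless there since $|s_j|$ is bounded below. Proposition \ref{exist}(i), applied to the bundle $K_M\otimes E$ with twisted metric $He^{-\psi}$, yields $v$ with $\bar\partial v=g$ and
\[
\int_M (\eta+\lambda^{-1})^{-1}|v|^2_H\,e^{-\psi}\;\leq\;2\int_M \gamma^{-1/2}|g_0|^2_H\,e^{-\psi}\;\leq\;C\Vert f\Vert^2_\fk .
\]
Since $e^{-\psi}\sim |s_j|^{-2(n+k_j)}$ at $p_j$ is not locally integrable, $L^2$-finiteness forces every partial derivative of $v$ up to order $k_j$ to vanish at $p_j$. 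Consequently $F:=\tilde F-v$ is holomorphic with $J^{\fk}_\fp(F)=f$, and after choosing $\eta$ and $\lambda$ so that $(\eta+\lambda^{-1})^{-1}e^{-\psi}$ is comparable to $\prod_j |s_j|^{-2n}(\log 1/|s_j|)^{-\delta_j}$, the inequality above specializes to $(\mathrm{A}_\fk)$.

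\emph{Main obstacle.} The delicate step is the simultaneous calibration of $\psi,\eta,\lambda,\zeta$ so that three things hold: (i) the twisted curvature bracket dominates $\gamma|(d''\zeta)^*\cdot|^2$; (ii) $\gamma^{-1/2}e^{-\psi}$ is integrable against $|g_0|^2$ on the annuli $\mathrm{supp}(d\chi_j)$; and (iii) the output weight $(\eta+\lambda^{-1})^{-1}e^{-\psi}$ reproduces the advertised $\prod_j|s_j|^{-2n}(\log 1/|s_j|)^{-\delta_j}$ with $\delta_j>0$ arbitrary. Realizing all three compatibly is exactly the content of the ODE reformulation of \cite{gz}; transcribing it from the line-bundle hypersurface case to the vector-bundle, multi-point, higher-jet setting, and checking that the Nakano semipositivity of $\Theta_H$ suffices (rather than positivity), is the heart of the argument.
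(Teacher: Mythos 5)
The proposal follows the broad Ohsawa--Takegoshi template, but the central technical step is left out and, as written, it does not go through when the support has more than one point. You propose to solve the twisted $\bar\partial$-equation once, for $g=\bar\partial\tilde F=\sum_j \bar\partial\chi_j\cdot P_j$, by invoking Proposition~\ref{exist}(i) with the single function $\zeta:=\psi=\sum_j(n+k_j)\log|s_j|^2$. That proposition, however, requires the data $g$ to be written literally as $g=d''\zeta\wedge g_0$ for an $(n,q-1)$-valued $g_0$; in local coordinates this forces the $(0,1)$-direction of $g$ to be everywhere parallel to $\bar\partial\zeta$. On the annulus $\operatorname{supp}(d\chi_j)$ one has $\bar\partial\chi_j$ proportional to $\bar\partial\log|s_j|^2$, whereas $\bar\partial\psi=\sum_i(n+k_i)\bar\partial\log|s_i|^2$ also carries the smooth contributions of the other points $p_i$, $i\neq j$, and these are generically not collinear with $\bar\partial\log|s_j|^2$. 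So for $l\geq 2$ the factorization $g=\bar\partial\psi\wedge g_0$ simply does not exist, and the $L^2$-existence theorem cannot be applied as you describe. This is not a calibration detail to be absorbed into the Guan--Zhou ODE; it is a structural obstruction to the one-shot, single-$\zeta$ approach.

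The paper avoids the issue precisely by never taking $\zeta=\psi$ and by never running a simultaneous multi-point cutoff. For $|\fk|=0$ it first decomposes $f=f_1+\cdots+f_l$ with $f_j(p_i)=\delta_{ij}f(p_i)$ and extends each $f_j$ separately, using a cutoff $1-b_{\tau,\el}(\phi)$ that depends only on $\phi=\log|s_1|$, so $d''$ of the cutoff is automatically a multiple of $d''\phi$ and the Proposition~\ref{exist} input is of the required form with $\zeta=\phi$. The weight $e^{-2n\sum_j\log|s_j|}$ still carries all the points, but the $\bar\partial$-data does not. For $|\fk|\geq 1$ the paper proceeds by induction on $|\fk|$, isolating at each step one extra jet coefficient at a single point $p_1$ via the short exact sequence $0\to S^{k_1}T^*_M|_{p_1}\to\ko/\kj^{k_1+1}_{p_1}\cdots\to\ko/\kj^{k_1}_{p_1}\cdots\to 0$, and again taking a cutoff in $\phi=\log|s_1|$ alone; Lemma~\ref{claim} controls the limit $\tau\to-\infty$. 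Your quasi-extension $\tilde F=\sum_j\chi_j P_j$ is a reasonable starting point, and your $L^2$-non-integrability argument that the solution $v$ has vanishing jets of order $\leq k_j$ at $p_j$ is correct in principle, but you would need to either (i) restore the single-point-at-a-time decomposition (in which case you essentially reproduce the paper's induction), or (ii) choose a $\zeta$ that agrees with $\log|s_j|$ near each $p_j$ and then verify the curvature inequality~(\ref{delta}) for this patched $\zeta$, which you neither do nor claim.

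A secondary point: your $\eta=a-\log(-\sum_j\log|s_j|^2)$ is the classical Ohsawa--Takegoshi choice, which produces an iterated-logarithm weight in the output; to get the advertised $\prod_j(\log\tfrac{1}{|s_j|})^{-\delta_j}$ the paper instead takes $u(x)=x+\frac{1}{\delta_1}x^{\delta_1}$ inside $\eta_\el=u(-v_{\tau,\el}\circ\phi)$ and computes $(\eta+\lambda^{-1})^{-1}=\delta_1(1-\delta_1)(\log\tfrac{1}{|s_1|})^{-\delta_1}$ directly. You defer this to the Guan--Zhou ODE framework, which is acceptable as a strategy but leaves the actual form $(A_{\fk})$ unestablished.
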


\begin{proof}  Firstly note that it suffices to prove 

\begin{lemma}\label{exlem}Under the assumptions in Theorem \ref{exthm}, then for
 any holomorphic jet section $f\in J^{\fk}_{\fp} (K_M \otimes E)$ satisfying $$\Vert f\Vert ^2_{\fk}<\infty,$$ there exists holomorphic section $F\in H^0(M,K_M \otimes E) $  such that $J^{\fk}_{\fp} (F) =f$ and for any $1\leq j\leq l$
 $$\int_M \frac{|F|^2_H}{\Big(|s_1|\cdots |s_l|)^{2n}\Big(\log \frac{1}{|s_j|}\Big)^{\delta_j}} \leq C\Vert f\Vert ^2_{\fk},\hskip 3cm {\rm (B_{\fk,j})}$$
where $\delta_j >0 $ are  any positive constants and $C$ is a finite positive  constant depending only on $\delta_j$ and the minimal  distance between any two points among $p_1,\cdots,p_l.$
\end{lemma}
Assume that Lemma \ref{exlem} is true, then by the inequality of arithmetic and geometric means we have
$$\int_M \frac{|F|^2_H}{{\prod}_{j=1}^l \Big(|s_j|^{2n}\Big(\log \frac{1}{|s_j|}\Big)^{\frac{\delta_j}{n}}\Big)} \leq \frac{1}{l}\sum_{j=1}^l \int_M \frac{|F|^2_H}{\Big(|s_1|\cdots |s_l|)^{2n}\Big(\log \frac{1}{|s_j|}\Big)^{\delta_j}} ,$$
 hence Theorem  \ref{exthm} follows. Note for the inequalities {\rm ($A_{\fk}$)} and {\rm ($B_{\fk,j}$)}  it suffices to prove the case where $0<\delta_j <1 $ since it implies the cases $\delta_j\geq 1$ via using the inequalities $\frac{1}{(\log \frac{1}{|s_j|})^{a}}\leq \frac{1}{(\log \frac{1}{|s_j|})^{b}}$ if $a\geq b> 0.$

 Due to the symmetry, clearly it suffices to establish the inequalities  {\rm ($B_{\fk,j}$)} in  Lemma \ref{exlem} for $j=1.$ We will establish {\rm ($A_{\fk}$)} and {\rm ($B_{\fk,1}$)} altogether  by 
by induction on $|\fk|=k_1 +\cdots+k_l.$ 

\vskip 0.5cm

The case $|\fk|=0$ is essentially a sharpening of a special case (the support of the section to be extended is of dimension zero) of the Demailly's version \cite{deex}  of the   Ohsawa-Takegoshi-Manivel extension theorem \cite{ot},\cite{oh1},\cite{man}.  
Since $M$ is Stein, the restriction map $H^0 (M,K_M \otimes E)\rw H^0 (M,K_M \otimes E/\kj_{p_1}\cdots \kj_{p_l})$ is surjective, for any $0$-jet section $f\in H^0 (M,K_M \otimes E/\kj_{p_1}\cdots \kj_{p_l}),$ we may write $f=f_1 +\cdots+f_n,$  with $f_j\in H^0 (M,K_M \otimes E/\kj_{p_1}\cdots \kj_{p_l})$ satisfying
 \begin{equation} f_j(p_j)=f(p_j),~~~~~~~~~~~~~~~~~{\rm and}~~~f_j(p_k)=0~~~{\rm for} ~~~k\not=j.\label{fid}\end{equation}
It suffices to extend each $f_j$ to a section $\hat{f}_j$ with the required properties and at the same time satisfying $\hat{f}_j(p_k)=0$ for $k\not=j.$ In the following we extend $f_1$ to $\hat{f}_1$ and
 the extension of $f_j$ follows in the same way.

Let $\rho_{\el}$ be the smooth kernel function such that its support is contained in $(-\el,\el)$ and $\int^{\el}_{-\el}\rho_{\el}(t)dt=1.$
Then
 \begin{equation} a_{\tau,\el}(t)=\frac{1}{1-\el}\chi_{\{\tau-(1-\el)/2<t<\tau+(1-\el)/2\}}\star \rho_{\frac{1}{8}\el}\label{a}\end{equation}
 is a smooth approximation of the characteristic function $\chi_{\{\tau-1/2 <t<\tau+1/2 \}}$ as $\el\rw 0.$ Let
 \begin{equation} b_{\tau,\el}(t)=\int_{\tau-1}^t a_{\tau,\el}(x)dx,\hskip 1cm v_{\tau,\el}(t)=\int_{0}^t b_{\tau,\el}(x)dx.\label{bc}\end{equation}
 Throughout this proof we assume that $0<\el <\frac{1}{2}.$  Then $v_{\tau,\el}$ is a family of convex increasing function with $0\leq v_{\tau,\el}'\leq 1$ and $0\leq v_{\tau,\el}''\leq 2.$  Let 

 \begin{equation}  \phi =\log |s_1|, \hskip 1cm
 \eta_{\el} =u(-v_{\tau,\el}\circ \phi).\label{pet}\end{equation}
  Here $u$ is a positive smooth function  which will be decided later. Then
 $$\begin{array}{rcl}
 -id'd''\eta_{\el} &=& u'[b_{\tau,\el}(\phi)id'd''\phi +a_{\tau,\el}(\phi)id'\phi\wedge d''\phi] \\
 &&-u''id'(v_{\tau,\el}(\phi))\wedge d''(v_{\tau,\el}(\phi));\\
 -\lambda_{\el} id'\eta_{\el} \wedge d''\eta_{\el} &=& -\lambda_{\el} u'^2 id'(v_{\tau,\el}(\phi))\wedge d''(v_{\tau,\el}(\phi)).
 \end{array}$$
Let $\lambda_{\el}=-\frac{u''_{\el}}{u'^2_{\el}}$  and assume that
\begin{align}
u>0,\hskip 0.3cm u'>1 \hskip 0.3cm {\rm and} \hskip 0.3cm u''<0. \label{ass}
\end{align}
 Then $$-id'd''\eta_{\el}-\lambda_{\el} id'\eta_{\el} \wedge d''\eta_{\el}=u'[b_{\tau,\el}(\phi)id'd''\phi +a_{\tau,\el}(\phi)id'\phi\wedge d''\phi] $$ and
 $$\eta_{\el}+\lambda_{\el}^{-1}=\Big(u -\frac{u'^2}{u''}\Big)\circ (-v_{\tau,\el}\circ \phi).$$
   Since $[id'\phi\wedge d''\phi,\Lambda]=d''\phi(d''\phi)^*$ is a semi-positive self-adjoint operator by (\ref{cur3}), the operators $[a_{\tau,\el}id'\phi\wedge d''\phi,\Lambda]$ and $[b_{\tau,\el}(\phi)id'd''\phi,\Lambda]$ are semi-positive. Let $$\widetilde{H}_1=He^{-2n\sum_{j=1}^l \log |s_j|}$$ be a new Hermitian metric on $K_M\otimes E,$ then its curvature 
   $$\Theta_{\widetilde{H}_1}=\Theta_{H} +n\sum_{j=1}^l id'd''\log |s_j|^2$$
    is semi-positive.  For any $E$-valued $(n,1)$-form $\vartheta$, by (\ref{nakanoid}), $\langle [\Theta_{\widetilde{H}_1},\Lambda]\vartheta,\vartheta\rangle\geq 0.$  Hence
   $$B_{\el}=[ \eta_{\el} \Theta_{\widetilde{H}_1} -id'd''\eta_{\el}-i\lmd d'\eta_{\el}\wedge d''\eta_{\el},\Lm]$$
   is a  semi-positive self-adjoint operator, and
$$\langle B_{\el}\vartheta,\vartheta\rangle  \geq \langle [a_{\tau,\el}(\phi)id'\phi\wedge d''\phi),\Lm]\vartheta,\vartheta\rangle \geq a_{\tau,\el}(\phi) |(d''\phi)^* \vartheta|^2. $$

 Let $\tilde{f}_1$ be a section of $H^0(M,K_M\otimes E)$ which is holomorphic lifting of $f_1,$ and set
 $$L_{1,\tau,\el}=D''((1-b_{\tau,\el}(\phi))\tilde{f}_1)=(-a_{\tau,\el}(\phi)d''\phi)\tilde{f}_1. $$
Here $1-b_{\tau,\el}(t)$ is a smooth cut off  function,
$1-b_{\tau,\el}(t)=1$ for $t<\tau -\frac{1-\el}{2}$ and $1-b_{\tau,\el}(t)=0$ for $t>\tau +\frac{1-\el}{2}.$ The support of $L_{1,\tau,\el}$ is contained in $\phi^{-1}([\tau-\frac{1-\el}{2},\tau+\frac{1-\el}{2}]).$ Hence $L_{1,\tau,\el}$ has its support
contained in the tubular neighborhood of $p_1$ as $\tau\rightarrow -\infty.$ We assume that $\tau\ll 0$ such that $\{p_2,\cdots,p_l\}\subset M\backslash \phi^{-1}([\tau-\frac{1-\el}{2},\tau+\frac{1-\el}{2}]),$ then
$$L_{1,\tau,\el}(p_2)=\cdots =L_{1,\tau,\el}(p_l)=0.$$
By (i) of Proposition \ref{exist}, there exist $K_{1,\tau,\el}\in L^2(M,K_M\otimes E)$ such that
$$D'' K_{1,\tau,\el}=L_{1,\tau,\el},$$ and

$$ \int_{M}(\eta_{\el} +\lambda_{\el}^{-1})^{-1}\frac{|K_{1,\tau,\el}|^2_H}{(|s_1|\cdots |s_l|)^{2n}}   \leq 2\int_{M} a_{\tau,\el}(\phi)\frac{|\tilde{f}_1 |^2_{H}}{(|s_1|\cdots |s_l|)^{2n}}  .$$
Note by (\ref{fk0}) the limit of the integral of R.H.S. of the above inequality is bounded after taking $\el\rw 0$ and then $\tau\rw -\infty.$ So the integral  of the L.H.S. is uniformaly  bounded.
Since $(\eta_{\el} +\lambda_{\el}^{-1})^{-1}$  is a smooth function, by counting the orders of the singularity at $p_1,\cdots,p_l$ of the denominator of the integral of the L.H.S. of the inequality above such that
$\frac{|K_{1,\tau,\el}|^2_H}{(|s_1|\cdots |s_l|)^{2n}}$ is locally integrable in  a neighborhood of $p_1,\cdots,p_l,$  
we must have
$$K_{1,\tau,\el}(p_1) =\cdots=K_{1,\tau,\el}(p_l) =0.$$
 Let $f_{1,\tau,\el}=((1-b_{\tau,\el}(\phi))\tilde{f}_1-K_{1,\tau,\el}.$
Then $D''f_{1,\tau,\el}=0$ and
$$\begin{array}{rcl}&& \int_{M}\frac{(\eta_{\el} +\lambda_{\el}^{-1})^{-1}}{(|s_1|\cdots |s_l|)^{2n}}|f_{1,\tau,\el}-(1-b_{\tau,\el}(\phi))\tilde{f}_1|^2_{H} \\
 &\leq & 2\int_{M} \frac{a_{\tau,\el}(\phi)}{(|s_1|\cdots |s_l|)^{2n}}|\tilde{f}_1 |^2_{H}.\\
 \end{array} $$ 
 
 Since  $K_{1,\tau,\el}$ 
 are uniformly $L^2$-bounded  (with respect to the Hermitian metric $\widetilde{H}_1$ on $E$),
 hence the holomorphic section family $f_{1,\tau,\el}$ are  uniformly $L^2$-bounded  and 
  have a weakly convergent subsequence $f_{1,\tau,\el_k}$ in
  $L^2$-topology.     It is not difficult to check the weak limit  $\hat{f}_{1,\tau}$  of the subsequence  $f_{1,\tau,\el_k}$ is a holomorphic section by using Cauchy integral formula.
   Hence by taking weak limit after passing to subsequence we have
$$\begin{array}{rcl} 
&&\int_{M }\frac{(\eta +\lambda^{-1})^{-1}}{(|s_1|\cdots |s_l|)^{2n}}|\hat{f}_{1,\tau}-(1-b_{\tau}(\phi))\tilde{f}_1|^2_{H}\\
 & \leq &
\underset{\el\rw 0}{\liminf}\int_{M}\frac{(\eta_{\el} +\lambda_{\el}^{-1})^{-1}}{(|s_1|\cdots |s_l|)^{2n}}|f_{1,\tau,\el}-(1-b_{\tau,\el}(\phi))\tilde{f}_1|^2_{H} \\
&\leq&2\int_{M } \frac{a_{\tau}(\phi)|\tilde{f}_1 |^2_{H}}{(|s_1|\cdots |s_l|)^{2n}}.\\
\end{array}$$
 where $b_{\tau}=\lim_{\el\rw 0} b_{\tau,\el}$ and  $v_{\tau}=\lim_{\el\rw 0} v_{\tau,\el}$ and $\eta =u(-v_{\tau}(\phi)).$
Note $b_{\tau}(t)=1$ for $t>\tau +\frac{1}{2}.$ In particular if $t>\tau +\frac{1}{2}$ then  $v_{\tau}(t)=\int^t_0 b_{\tau}(x) dx =t.$
Hence we have $\eta =u(\log \frac{1}{|s_1|})$ on any compact subset of $M\backslash \{p_1\}$ for $\tau\ll 0.$  We take the smooth function $u\in C^{\infty}[1,+\infty)$ defined by $u(x)=x+\frac{1}{\delta_1}x^{\delta_1},$ where $0<\delta_1 <1.$  Then $u$ satisfies assumption (\ref{ass}) and 
$$u(x)-\frac{u'(x)^2}{u''(x)}=x+\frac{1+2x^{\delta_1 -1}}{(1-\delta_1)x^{\delta_1 -2}}+\frac{x^{\delta_1}}{\delta_1 (1-\delta_1)}\geq \frac{x^{\delta_1}}{\delta_1 (1-\delta_1)}.$$Therefore we have
$(\eta +\lambda^{-1})\circ (-v_{\tau}(\phi))=\frac{1}{\delta_1 (1-\delta_1)}(\log\frac{1}{|s_1|})^{\delta_1}$ for $\tau\ll 0$ and
$$ \int_{M }\frac{|\hat{f}_{1,\tau}-(1-b_{\tau}(\phi))\tilde{f}_1|^2_{H}}{\Big({(|s_1|\cdots |s_l|)^{2n}}\Big)(\log\frac{1}{|s_1|})^{\delta_1}}  
\leq \frac{2}{\delta_1 (1-\delta_1)}\int_{M } \frac{a_{\tau}(\phi)|\tilde{f}_1 |^2_{H}}{{(|s_1|\cdots |s_l|)^{2n}}}, $$
The limit in the term of the R.H.S. of the inequality above is bounded
\begin{equation}
\lim_{\tau\rw -\infty} \int_{M} \frac{a_{\tau}(\phi)}{{(|s_1|\cdots |s_l|)^{2n}}}|\tilde{f}_1|^2_{H}
 \leq C |f|_H^2(p_1),\label{fk0}\end{equation}
where $C$ is a constant depending only on the diameter of $M$ and the   minimal distance among the points in $p_1,\cdots,p_l.$ This fact is a special case of Lemma \ref{claim} of page \pageref{claim} where we prove  the inductive process. Just like taking weak limits of $f_{1,\tau,\el}$ after passing to subsequence in necessary,
 as $\el\rw 0,$
 now we may likewise take weak limit of $\hat{f}_{1,\tau}$  by selecting a convergent subsequence,
 as $\tau\rw-\infty.$ 
 Denote $\hat{f}_1$ the weak limit holomorphic section,  then we  get
  $$
 \int_{M }\frac{|\hat{f}_1|^2_{H}}{{(|s_1|\cdots |s_l|)^{2n}}\Big(\log\frac{1}{|s_1|}\Big)^{\delta_1}}
\leq C |f|_H^2(p_1),
$$
 where $C$ is a constant depending only on $\delta_1,$ the diameter of $M,$ and the   minimal distance among the points in $p_1,\cdots,p_l.$ 
 
 In the following we directly assume $K_{1,\tau,\el}$ and $f_{1,\tau,\el}$ are weakly convergent (without passing to subsequnces) without loss generality.
Since $\tilde{f}_1(p_j)=K_{1,\tau,\el}(p_j)=0$ for $j\geq 2$ we have
$f_{1,\tau,\el}(p_j)=((1-b_{\tau,\el}(\phi))\tilde{f}_1(p_j)-K_{1,\tau,\el}(p_j)=0$ for $j\geq 2$  and hence $\hat{f}_1(p_j)=\lim_{\tau\rw -\infty}\lim_{\el\rw 0}f_{1,\tau,\el}(p_j)=0.$
Note $b_{\tau,\el}(-\infty)=0,$  therefore $f_{1,\tau,\el}(p_1)=((1-b_{\tau,\el}(\phi))\tilde{f}_1(p_1)-K_{1,\tau,\el}(p_1)=\tilde{f}_1(p_1)=f(p_1),$ and hence $\hat{f}_1(p_1)=\lim_{\tau\rw -\infty}\lim_{\el\rw 0}f_{1,\tau,\el}(p_j)=f_1(p_1).$ 

Now let $F=\hat{f}_1 +\cdots +\hat{f}_l.$ Then $F(p_j)=f(p_j)$ for $j=1,\cdots,l$ and $|F|_H^2\leq l(|\hat{f}_1|^2_H+\cdots+|\hat{f}_l|^2_H ).$
 By the inequality of arithmetic and geometric means we have
$$\int_{M }\frac{|F|^2_{H}}{\Big({\prod}_{j=1}^l(|s_j|^{2n}\log\frac{1}{|s_j|})^{\delta_j}\Big)} 
\leq C\sum|f|_H^2(p_j)=C\Vert f\Vert^2_0,
$$
 $C$ are finite positive  constants depending only on $\delta_j$ and the minimal  distance between any two points among $p_1,\cdots,p_l.$ This finishes the proof of the case $|\fk|=0.$

\vskip 0.5cm

 Assume Theorem \ref{exthm}  and Lemma \ref{exlem}  have been proved for $|\fk|-1.$ 
Without loss of generality we assume $k_1\geq 1.$ Consider the exact sequence of sheaves:
$$0\rw (S^{k_1} T^*_M)|_{p_1}\rw \ko/(\kj^{k_1+1}_{p_1}\cdots \kj^{k_l+1}_{p_l})\rw \ko/(\kj^{k_1 }_{p_1}\kj^{k_2+1 }_{p_2}\cdots \kj^{k_l +1}_{p_l})\rw 0$$
Given $f\in  J^{\fk}_{\fp} (K_M \otimes E),$  let $g$ be the image of $f$ in $H^0(M,\frac{K_M\otimes E} {\kj^{k_1 }_{p_1}\kj^{k_2+1 }_{p_2}\cdots \kj^{k_l+1}_{p_l}})$ under the induced cohomology group morphism. By the induction hypothesis, there exists a lifting section $F_{\fk-1}\in H^0(M,K_M\otimes E)$ such that $J^{\fk-1}_p (F_{\fk-1})=g$ and
\begin{equation}\int_M \frac{|F_{\fk-1}|^2_H}{{\prod}_{j=1}^l \Big(|s_j|^{2n}\Big(\log \frac{1}{|s_j|}\Big)^{\delta_j}\Big)} \leq C\Vert f\Vert ^2_{\fk-1}.\label{inda}\end{equation}
Here we denote $\fk-1=(k_1-1,k_2,\cdots,k_l)$ for brevity.
Moreover we can view $f-J^{\fk-1}_p (F_{\fk-1})$ as a global holomorphic section of $K_M\otimes E\otimes  (S^{k_1} T^*_M)|_{p_1}$ by using the induced cohomology group exact sequence 
induced by the following short exact sequence of sheaves 
$$0\rw K_M\otimes E\otimes (S^{k_1} T^*_M)|_{p_1}\rw \frac{K_M\otimes E}{\kj^{k_1+1}_{p_1}\cdots \kj^{k_l+1}_{p_l}}\rw \frac{K_M\otimes E}{\kj^{k_1 }_{p_1}\kj^{k_2+1 }_{p_2}\cdots \kj^{k_l+1}_{p_l}}\rw 0.$$

We will construct holomorphic lifting section of  $f-J^{\fk-1}_{\fp} (F_{\fk-1})$ via solving $\bar{\partial}$-equation
 by using H\"omander's $L^2$-theory.

Let $\rho_{\el},a_{\tau,\el},b_{\tau,\el}, v_{\tau,\el}$ defined as before and 
   let $\phi =\log |s_1|$
 and $\eta_{\el} =u(-v_{\tau,\el}\circ \phi)$  and $\lambda_{\el}=-\frac{u''_{\el}}{u'^2_{\el}}.$   Then $\eta_{\el}+\lambda_{\el}^{-1}=\Big(u -\frac{u'^2}{u''}\Big)\circ (-v_{\tau,\el}\circ \phi).$
    Let $$\widetilde{H}=He^{-2\sum_{j=1}^l (n+k_j)\log |s_j|}$$ be a new Hermitian metric on $K_M\otimes E,$ then  for any $E$-valued $(n,1)$-form $\vartheta,$ the operator
  $B_{\el}:=[ \eta_{\el} \Theta_{\widetilde{H}} -id'd''\eta_{\el}-i\lmd d'\eta_{\el}\wedge d''\eta_{\el},\Lm]$  satisfies
  $$\langle B_{\el}\vartheta,\vartheta\rangle  \geq \langle [a_{\tau,\el}(\phi)id'\phi\wedge d''\phi),\Lm]\vartheta,\vartheta\rangle \geq a_{\tau,\el}(\phi) |(d''\phi)^* \vartheta|^2. $$
Let  $\wf$ be a holomorphic lifting section of $f$ in $K_M\otimes E$ and put
 $$g_{\el}=D''((1-b_{\tau,\el}(\phi))(\wf-F_{\fk-1}))=(-a_{\tau,\el}(\phi)d''\phi)(\wf-F_{\fk-1}). $$
The support of $g_{\el}$ is contained in $\phi^{-1}([\tau-\frac{1-\el}{2},\tau+\frac{1-\el}{2}]).$ Hence $g_{\el}$ has its support
contained in the tubular neighborhood of $p_1$ as $\tau\rightarrow -\infty.$
By Proposition \ref{exist}, there exist $G_{\tau,\el}\in L^2(M,K_M\otimes E)$ such that
$$D'' G_{\tau,\el}=g_{\el},$$ and

$$ \int_{M}(\eta_{\el} +\lambda_{\el}^{-1})^{-1}\frac{|G_{\tau,\el}|^2_H}{{\prod}_{j=1}^l |s_j|^{2(n+k_j)}}   \leq 2\int_{M} a_{\tau,\el}(\phi)\frac{|\wf-F_{\fk-1}|^2_{H}}{{\prod}_{j=1}^l |s_j|^{2(n+k_j)}}  .$$
In the following Lemma \ref{claim} we will prove the right hand side  has a uniform bound not depending on $\tau$ as $\el\rw 0$ and $\tau\rw -\infty.$
Hence the integral of the L.H.S.  of the inequality above is uniformly $L^2$-bounded with respect to the metric  $He^{-2\sum_{j=1}^l (n+k_j)\log |s_j|}.$
Note by counting the order of the singularity at $p_1,\cdots,p_l$ of the denominator of the integral of the left hand side of the inequality above such that
$\frac{|G_{\tau,\el}|^2_H}{{\prod}_{j=1}^l |s_j|^{2(n+k_j)}} $ is locally integrable in  a neighborhood of $p_j,$  
we must have
\begin{equation}\partial ^{\al}G_{\tau,\el}|_{p_j} =0\hskip 0.2cm{\rm with}~~~ |\al|\leq k_j~~~~~ {\rm for} \hskip 0.2cm j=1,\cdots,l.\label{gg}\end{equation}
 Let $F_{\tau,\el}=((1-b_{\tau,\el}(\phi))(\wf-F_{\fk-1}))-G_{\tau,\el}.$
Then $D''F_{\tau,\el}=0$ and
$$\begin{array}{rcl}&& \int_{M}\frac{(\eta_{\el} +\lambda_{\el}^{-1})^{-1}}{{\prod}_{j=1}^l |s_j|^{2(n+k_j)}}|F_{\tau,\el}-(1-b_{\tau,\el}(\phi))(\wf-F_{\fk-1})|^2_{H} \\
 &\leq & 2\int_{M} \frac{a_{\tau,\el}(\phi)}{{\prod}_{j=1}^l |s_j|^{2(n+k_j)}}|\wf-F_{\fk-1}|^2_{H}.\\
 \end{array} $$
 Note $\{G_{\tau,\el}\}$ are uniformly
 $L^2$-bounded, hence  $\{F_{\tau,\el}\}$ 
  are uniformly
 $L^2$-bounded holomorphic sections of $K_M\otimes E,$ hence as $\el\rw 0,$ by taking weakly convergent subsequence if in need 
 we may assume  
 they  convergent weakly to a holomorphic section, denoted by $F_{\tau}.$ Let $b_{\tau}=\lim_{\el\rw 0} b_{\tau,\el}$ and  $v_{\tau}=\lim_{\el\rw 0} v_{\tau,\el}.$ Then we have
$$ \int_{M }\frac{(\eta +\lambda^{-1})^{-1}}{{\prod}_{j=1}^l |s_j|^{2(n+k_j)}}|F_{\tau}-(1-b_{\tau}(\phi))(\wf-F_{\fk-1})|^2_{H}  \leq 2\int_{M } \frac{a_{\tau}(\phi)|\wf-F_{\fk-1}|^2_{H}}{{\prod}_{j=1}^l |s_j|^{2(n+k_j)}}$$
where $\eta  =u(\log \frac{1}{|s_1|})$ on any compact subset of $M\backslash \{p_1\}$ for $\tau\ll 0.$
Take  $u(x)=x+\frac{1}{\delta_1}x^{\delta_1},$ where $0<\delta_1 <1$ and $x>1.$  Then $u$ satisfies assumption (\ref{ass}) and 
$(\eta +\lambda^{-1})\circ (-v_{\tau}(\phi))=\frac{1}{\delta_1 (1-\delta_1)}(\log\frac{1}{|s_1|})^{\delta_1}$ for $\tau\ll 0,$ hence
$$ \int_{M }\frac{|F_{\tau}-(1-b_{\tau}(\phi))(\wf-F_{\fk-1})|^2_{H}}{\Big({{\prod}_{j=1}^l |s_j|^{2(n+k_j)}}\Big)(\log\frac{1}{|s_1|})^{\delta_1}}  
\leq \frac{2}{\delta_1 (1-\delta_1)}\int_{M } \frac{a_{\tau}(\phi)|\wf-F_{\fk-1}|^2_{H}}{{{\prod}_{j=1}^l |s_j|^{2(n+k_j)}}}, $$
In the following Lemma \ref{claim} we will prove the right hand side has a uniform bound not depending on $\tau$ as $\tau\rw -\infty.$
 Hence we could take weak limit again. Let $F_k$ be the weak limit of $F_{\tau}$ (or its subsequence) as $\tau\rw -\infty.$ 
 Likewise   we have
$$\begin{array}{rcl}
&& \int_{M }\frac{|F_k|^2_{H}}{{{\prod}_{j=1}^l |s_j|^{2(n+k_j)}}\Big(\log\frac{1}{|s_1|}\Big)^{\delta_1}}\\
& \leq& \lim_{\tau\rw -\infty}\frac{1}{\delta_1 (1-\delta_1)}\int_{M }
 \frac{a_{\tau}(\phi)|\wf-F_{\fk-1}|^2_{H}}{{{\prod}_{j=1}^l |s_j|^{2(n+k_j)}}}. \\
 \end{array} $$

\begin{lemma}\label{claim}
 $$ \begin{array}{rcl}
&&\lim_{\tau\rw -\infty} \int_{M} \frac{a_{\tau}(\phi)|\wf-F_{\fk-1}|^2_{H}}{{{\prod}_{j=1}^l |s_j|^{2(n+k_j)}}}\\
 &\leq& C\Big( \sum_{|\al|= k_1} | \partial^{\al} f |_H^2 (p_1)+ \int_{M }\frac{|F_{\fk-1}|^2_{H}}{{\prod}_{1\leq j\leq l}(|s_j|^{2n}(\log \frac{1}{|s_j|})^{\delta_j})}\Big),\\
 \end{array}$$
 \end{lemma}

\noindent where $C$ is a constant depending only on the diameter of $M$ and the   minimal distance among the points in $p_1,\cdots,p_l.$
\vskip 0.5cm

\noindent Assume the Lemma \ref{claim},  we can finish the proof of Theorem \ref{exthm} and Lemma \ref{exlem}. In fact by the induction assumption   {\rm ($A_{\fk -1}$)}   and note $\frac{1}{|s_j|^{2(k_j +n)}}\geq \frac{1}{|s_j|^{2n}}$ 
we get  
 \begin{equation} \int_{M }\frac{|F_k|^2_{H}}{(|s_1|\cdots |s_l|)^{2n}(\log\frac{1}{|s_1|})^{\delta_1}} 
\leq  C ( \sum_{|\al|\leq k_1}| \partial^{\al} f|_H^2 (p_1)+||f||_{\fk-1}).\label{mast1} \end{equation}

 In the following we directly assume $G_{\tau,\el}$ and $F_{\tau,\el}$  are weakly convergent (without passing to subsequnces) without loss generality.
Now put $F=F_k +F_{\fk-1}.$   Note that $F_{\tau,\el}=(1-b_{\tau,\el}(\phi))(\wf-F_{\fk-1})-G_{\tau,\el}\rw F_k$ when $\el\rw 0$ and $\tau\rw -\infty.$ 
By definition of $b_{\tau,\el},$  when $p$ is inside a sufficiently small neighborhood  of $p_1$  we have $b_{\tau,\el}(\phi(p))=0$ and hence in a small neighborhood of $p_1$ we have $F_{\tau,\el}(p)=\wf(p)-G_{\tau,\el}(p)$ and
by (\ref{gg}) we have
$$\partial ^{\al} F(p_1)=\partial ^{\al} \wf (p_1)$$ for  any $\al$ with $|\al|\leq k_1.$
For any $p\not=p_1$ we have $$\lim_{\tau\rw -\infty}\lim_{\el\rw 0}(1-b_{\tau,\el}(\phi(p)))=0,$$ 
In fact if $\tau\ll 0$ then we have $b_{\tau,\el}(\phi(p))=1$ for any fixed $p\not=p_1.$  By (\ref{gg}) we have
$$\partial ^{\al} F(p_j)=-\lim_{\tau\rw -\infty}\lim_{\el\rw 0}\partial ^{\al} G_{\tau,\el}(p_j)+\partial ^{\al} F_{\fk-1}(p_j)=\partial ^{\al} \tilde{F}(p_j)$$ 
for any $\al$ with $|\al|\leq k_j$ and any $j\geq 2.$ 
Hence $F$ is the expected extension of jet section $f\in J^{\fk}_{\fp}(K_M\otimes E).$

By the induction assumption   {\rm ($B_{\fk -1,1}$)} 
 \begin{equation}  \int_M \frac{|F_{\fk-1}|^2_H}{ (|s_1|\cdots |s_l|)^{2n} \Big(\log \frac{1}{|s_1|}\Big)^{\delta_j}}\leq  C_1 \Vert f\Vert ^2_{\fk-1}.\label{mast2} \end{equation}
Note that $|F|^2_H\leq 2(|F_{k}|^2_H +|F_{\fk-1}|^2_H),$  by (\ref{mast1}) and (\ref{mast2}), 
$$\int_M \frac{|F|^2_H}{ (|s_1|\cdots |s_l|)^{2n} \Big(\log \frac{1}{|s_|}\Big)^{\delta_j}}\leq  C_2 \sum_{|\al|\leq k_1}| \partial^{\al} f |_H^2 (p_1)+C_1\Vert f\Vert ^2_{\fk-1}=C\Vert f\Vert ^2_{\fk},$$
where $C_1,C_2$ and $C$ are finite positive  constants depending only on $\delta_j$ and the minimal  distance between any two points among $p_1,\cdots,p_l.$ This establishes {\rm ($B_{\fk ,1}$)}   and hence {\rm ($A_{\fk }$)}. 

\vskip 0.7cm

\noindent {\it Proof of Lemma \ref{claim}:} Without loss of generality we assume the coordinate of $p_1$ is origin, i.e. $w_1=0.$  Take local trivialization of $K_M\otimes E$ on the small neighborhood of $p_1.$  Write
$$\wf-F_{\fk-1} =(\tilde{f}-f_{\fk-1})dz_1\wedge \cdots \wedge dz_n, $$
where $\tilde{f}$ and $f_{\fk-1}$ are $E$-valued holomorphic function.
 Since $\widetilde{F}$ is a lifting of $f$ and $J^{\fk-1}_{\fp}(F_{\fk-1})$ is the  image of $f$ in
  $$H^0(M,K_M\otimes E\otimes \ko/(\kj^{k_1 }_{p_1}\kj^{k_2 +1}_{p_2}\cdots \kj^{k_l+1}_{p_l})),$$ we know
    $$\partial^{\al} \tilde{f} (p_1)=\partial^{\al} f_{\fk-1}(p_1),\hskip 1cm \forall \hskip 0.5cm |\al|\leq k_1-1.$$
Let $dV=\wedge_{j=1}^n (idz_j\wedge d\bar{z}_j)$ be the volume element. Take coordinate transformation $z_j =e^{\tau/2} \tilde{z}_j$ and $\tilde{z}=(\tilde{z}_1,\cdots,\tilde{z}_n).$ Then   $dV=e^{n\tau}\wedge_{j=1}^n (id\tilde{z}_j\wedge d\bar{\tilde{z}}_j)=e^{n\tau}d{\sigma}$  with $d{\sigma}=\wedge_{j=1}^n (id\tilde{z}_j\wedge d\bar{\tilde{z}}_j).$ Under the coordinate transformation we have

\begin{equation}\frac{|\wf-F_{\fk-1}|^2_H }{|s_1|^{2k_1 +2n}}=h (27D)^{2k_1 +2n} \frac{|\tilde{f}-f_{\fk-1}|^2(e^{\tau/2} \tilde{z}_1,\cdots,e^{\tau/2} \tilde{z}_n) }
{e^{k_1 \tau}|\tilde{z}|^{2k_1}}d{\sigma},\label{cl1}\end{equation}
where $h$ is a function depending only on metric. Note
$$
\lim_{\tau\rw -\infty} \int_{M} \frac{a_{\tau}(\phi)|\wf-F_{\fk-1}|^2_{H}}{{\prod}_{j=1}^l |s_j|^{2(n+k_j)}}
=
\lim_{\tau\rw -\infty} \underset{e^{\tau-1/2}<|z|<e^{\tau+1/2}}{\int}  \frac{|\wf-F_{\fk-1}|^2_{H}}{{\prod}_{j=1}^l |s_j|^{2(n+k_j)}}.
$$
Using Taylor expansion $\tilde{f}-f_{\fk-1}=\sum_{\al}\frac{\partial^{\al}(\tilde{f}-f_{\fk-1})}{\al!}{\tilde{z}}^{\al}$
 together with (\ref{cl1})  we have
$$\begin{array}{rcl}
&&\underset{\tau\rw -\infty}{\lim} \int_{M} \frac{a_{\tau}(\phi)|\wf-F_{\fk-1}|^2_{H}}{{\prod}_{j=1}^l |s_j|^{2(n+k_j)}}\\
&=& \underset{\tau\rw -\infty}{\lim}\sum_{|\al|=k_1}\Big( \frac{|\partial^{\al}\tilde{f}-\partial^{\al}f_{\fk-1}|^2(0)}{|\al!|^2}\\
&&\cdot \underset{e^{\tau-1/2}<|z|<e^{\tau+1/2}}{\int} \frac{|\tilde{z}_1|^{2\al_1}\cdots |\tilde{z}_n|^{2\al_n}}{|\tilde{z}|^{2(k_1+n)}}\frac{h(27D)^{2k_1 +2n}}
{{{{\prod}_{j=2}^l |s_j|^{2(n+k_j)}}}}d{\sigma}\Big).\\
\end{array}$$
In the  integral  of the R.H.S. of the above equation 
$$\begin{array}{rcl}
&&\underset{\tau\rw -\infty}{\lim}\underset{e^{\tau-1/2}<|z|<e^{\tau+1/2}}{\int} \frac{|\tilde{z}_1|^{2\al_1}\cdots |\tilde{z}_n|^{2\al_n}}{{|\tilde{z}|^{2(k_1+n)}{\prod}_{j=2}^l |s_j|^{2(n+k_j)}}} d{\sigma}\\
&\leq& \underset{\tau\rw -\infty}{\lim}\Big(\frac{27D}{d}\Big)^{2(nl+|\fk|-k_1-n)}\underset{e^{\tau-1/2}<|z|<e^{\tau+1/2}}{\int} \frac{|\tilde{z}_1|^{
2\al_1}\cdots |\tilde{z}_n|^{2\al_n}}{|\tilde{z}|^{2(k_1+n)}}d{\sigma}\\
&=& 2^n\underset{\tau\rw -\infty}{\lim}\Big(\frac{27D}{d}\Big)^{2(nl+|\fk|-k_1-n)}\underset{e^{\tau-1/2}<|z|<e^{\tau+1/2}}{\int} \frac{1}{r}dr\cdot  \underset{|\tilde{z}|=1}{\oint}
\frac{|\tilde{z}_1|^{
2\al_1}\cdots |\tilde{z}_n|^{2\al_n}}{|\tilde{z}|^{2(k_1+n)}}dS\\
&=& 2^n \Big(\frac{27D}{d}\Big)^{2(nl+|\fk|-k_1-n)} \underset{|\tilde{z}|=1}{\oint}
\frac{|\tilde{z}_1|^{
2\al_1}\cdots |\tilde{z}_n|^{2\al_n}}{|\tilde{z}|^{2(k_1+n)}}dS\\
\end{array}$$
is a bounded constant. Here $dS$ is the area element of unit sphere in $\R^{2n}$
and $d$ is the minimal distance from $p_1$ to the point set $\{p_2,\cdots,p_l\}.$  Note $\partial^{\al}f (p_1)=\partial^{\al}\wf (p_1)=\partial^{\al}\tilde{f} (p_1)$ for $|\al|=k_1,$  so we have

\begin{equation}\lim_{\tau\rw -\infty} \int_{M} \frac{a_{\tau}(\phi)|\wf-F_{\fk-1}|^2_{H}}{{{\prod}_{j=1}^l |s_j|^{2(n+k_j)}}}
\leq B_1\sum_{|\al|=k_1}\Big(|\partial^{\al}f|_H^2(p_1)
+|\partial^{\al}f_{\fk-1}|_H^2(p_1)\Big),\label{latecite}
\end{equation}
where $B_1$ is a constant depending only on the diameter of $M$  and  minimal distance among the points $\{p_1,\cdots,p_l\}.$
 Expand $f_{\fk-1}(\tilde{z})=\sum_{\al}\frac{\partial^{\al}f_{\fk-1}(p_1)}{\al !}\tilde{z}^{\al}.$ By Parseval's formula
$$\begin{array}{rcl}
&&\int _{|\tilde{z}| <1} |F_{\fk-1}|^2_H \\
&=&\sum_{\al}\frac{|\partial^{\al} f_{\fk-1}|^2(p_1)}{(\al !)^2} \int _{|\tilde{z}| <1} |\tilde{z}_1|^{2\al_1} \cdots |\tilde{z}_n|^{2\al_n}d{\sigma},\\
\end{array}$$ we know
$$\sum_{|\al|=k_1}|\partial^{\al} f_{\fk-1}|^2(p_1)\leq\sum_{|\al|=k_1}\frac{(\al !)^2\int _{M} |F_{\fk-1}|^2_H}{\int _{\tilde{z}|<1} 
|\tilde{z}_1|^{2\al_1} \cdots |\tilde{z}_n|^{2\al_n}d{\sigma}}.$$ 
Hence
$$
\lim_{\tau\rw -\infty} \int_{M} \frac{a_{\tau}(\phi)|\wf-F_{\fk-1}|^2_{H}}{{{\prod}_{j=1}^l |s_j|^{2(n+k_j)}}}
\leq B_2\sum_{|\al|=k_1}\Big(|\partial^{\al}f|_H^2(p_1)
+\int_{M}|F_{\fk-1}|^2_H\Big),
$$
where $B_2$ is a constant depending only on the diameter of $M$  and  minimal distance among the points $\{p_1,\cdots,p_l\}.$
Since by our assumption that $|s_j|<1$ and $\log\frac{1}{|s_j|} >3,$    we have 
$$0<|s_j|^{2n}(\log\frac{1}{|s_j|})^{\delta_j}\leq |s_j|^{2n}(\log\frac{1}{|s_j|})\leq |s_j|^{2n-1}<1,$$ therefore
$$\begin{array}{rcl}
&&\underset{\tau\rw -\infty}{\lim} \int_{M} \frac{a_{\tau}(\phi)|\wf-F_{\fk-1}|^2_{H}}{{\prod}_{j=1}^l |s_j|^{2(n+k_j)} }\\
&\leq &C\sum_{|\al|=k_1}\Big(|\partial^{\al}f|_H^2(p_1)
+\int_{M}\frac{|F_{\fk-1}|^2_H}{{\prod}_{1\leq j\leq l}(|s_j|^{2n}(\log \frac{1}{|s_j|})^{\delta_j})}\Big),\\
\end{array}
$$
where $C$ is a constant depending only on the diameter of $M$  and  minimal distance among the points $\{p_1,\cdots,p_l\}.$ 
\end{proof}

\subsubsection{\bf Extension theorems for strong Nakano nef vector bundles}
Now assume $E$ is a strong Nakano nef vector bundle on a bounded Stein domain $M,$ which means for any positive number  $\rho >0,$ there exist a smooth Hermitian metric $H_{\rho}$ such that
 its curvature satisfying $\Theta_{H_{\rho}}(u,u)\geq -\rho ||u||^2$ for any $u\in T_M\otimes E,$ and $\{H_{\rho}\}$ have a subsequence convergent to a Nakano pseudo effective Hermitian metric $H,$
 called {\it limit Hermitian metric.}

\begin{theorem}\label{exth} Let $M$ be a  bounded Stein domain of diameter $D,$ and  $E$  a  strong Nakano nef holomorphic vector bundle   on $M,$  with a Nakano pseudo effective limit Hermitian metric $H.$  Then for
 any holomorphic jet section $f\in J^{\fk}_{\fp} (K_M \otimes E)$ satisfying $$\Vert f\Vert ^2_{\fk}<\infty,$$ there exists holomorphic section $F\in H^0(M,K_M \otimes E) $  such that $J^{\fk}_{\fp} (F) =f$ and
$$\int_M \frac{|F|^2_H}{{\prod}_{j=1}^l \Big(|s_j|^{2n}\Big(\log \frac{1}{|s_j|}\Big)^{\delta_j}\Big)} \leq C\Vert f\Vert ^2_{\fk}, $$
where $\delta_j >0 $ are  any positive constants and $C$ is a finite positive  constant depending only on $\delta_j$ and the minimal  distance between any two points among $p_1,\cdots,p_l.$
\end{theorem}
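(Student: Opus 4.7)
I would reduce Theorem \ref{exth} to the Nakano semi-positive case of Theorem \ref{exthm} by perturbing each smooth approximating metric on $E$ so as to absorb its small negative curvature, and then extract a limit via a normal-family argument.

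By the strong Nakano nef hypothesis there is a family $\{H_\rho\}_{\rho>0}$ of smooth Hermitian metrics on $E$ with $\Theta_{H_\rho}(u,u)\geq -\rho\|u\|^2_{G\otimes H_\rho}$ for every $u\in T_M\otimes E$, converging uniformly on $M$ to the continuous limit metric $H$ (along a subsequence, by the equicontinuity and uniform boundedness hypothesis). After translating coordinates so that $M\subset\{|z|\leq D\}$ and taking $G=\omega=id'd''|z|^2$ to be the Euclidean K\"ahler form, set
$$\widetilde H_\rho:=H_\rho\, e^{-\rho|z|^2}.$$
A direct calculation gives $\Theta_{\widetilde H_\rho}=\Theta_{H_\rho}+\rho\,\omega\otimes\mathrm{Id}_E$; in any frame simultaneously orthonormal for $G$ and $H_\rho$ at a point, the added term $\rho\,\omega\otimes\mathrm{Id}_E$ contributes exactly $\rho|u|^2$, cancelling the $-\rho\|u\|^2_{G\otimes H_\rho}$ defect. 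Hence $(E,\widetilde H_\rho)$ is smooth and Nakano semi-positive.

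Applying Theorem \ref{exthm} to $(E,\widetilde H_\rho)$ produces holomorphic sections $F_\rho\in H^0(M,K_M\otimes E)$ with $J^{\fk}_{\fp}(F_\rho)=f$ and
$$\int_M \frac{|F_\rho|^2_{\widetilde H_\rho}}{\prod_{j=1}^l|s_j|^{2n}(\log 1/|s_j|)^{\delta_j}}\leq C\,\|f\|^2_{\fk,\widetilde H_\rho},$$
where, by inspection of the proof of Theorem \ref{exthm}, the constant $C$ depends only on $\{\delta_j\}$, $D$ and the mutual distances between the $p_j$, and in particular not on the chosen Nakano semi-positive metric. Since $e^{-\rho D^2}\leq e^{-\rho|z|^2}\leq 1$ on $M$, one has $|F_\rho|^2_{\widetilde H_\rho}=e^{-\rho|z|^2}|F_\rho|^2_{H_\rho}$ and $\|f\|^2_{\fk,\widetilde H_\rho}\leq \|f\|^2_{\fk,H_\rho}$, so the estimate becomes
$$\int_M \frac{|F_\rho|^2_{H_\rho}}{\prod_{j=1}^l|s_j|^{2n}(\log 1/|s_j|)^{\delta_j}}\leq C\,e^{\rho D^2}\,\|f\|^2_{\fk,H_\rho},$$
whose right-hand side stays bounded as $\rho\to 0$ and in fact converges to $C\|f\|^2_{\fk,H}$ thanks to the uniform convergence $H_\rho\to H$ at the isolated points of $\fp$ and Proposition \ref{intnorm}.

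The uniform $L^2_{\mathrm{loc}}$ bound combined with Cauchy's estimates makes $\{F_\rho\}$ a normal family of holomorphic sections of $K_M\otimes E$, so after passing to a subsequence $F_{\rho_k}$ converges locally uniformly, together with all derivatives, to some $F\in H^0(M,K_M\otimes E)$; uniform convergence of derivatives near each $p_j$ preserves the jet condition $J^{\fk}_{\fp}(F)=f$. Since $H_{\rho_k}\to H$ uniformly and $F_{\rho_k}\to F$ locally uniformly off $\fp$, the integrand converges pointwise, and Fatou's lemma delivers the required bound with respect to the limit metric $H$. The main obstacle is a piece of book-keeping: verifying (i) that the constant $C$ from Theorem \ref{exthm} is genuinely independent of the chosen Nakano semi-positive metric on $E$, which follows from a careful reading of the proof (the constants produced in Lemma \ref{claim} depend only on $D$ and the geometry of $\fp$), and (ii) that the intrinsic jet norm $\|f\|_{\fk,\cdot}$ is continuous under $H_\rho\to H$ at the isolated points of $\fp$, which is guaranteed by the equicontinuity and uniform boundedness built into the strong Nakano nef hypothesis together with Proposition \ref{intnorm}.
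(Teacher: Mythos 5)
Your proposal is correct, and it takes a genuinely different route from the paper. You absorb the small negative curvature by twisting each approximating metric $H_\rho$ by a factor $e^{-\rho\psi}$ with $\psi$ a bounded strictly plurisubharmonic potential (your $|z|^2$; more generally any smooth $\psi\geq 0$ on the bounded domain with $id'd''\psi\geq G$), so that $\Theta_{\widetilde H_\rho}=\Theta_{H_\rho}+\rho(id'd''\psi)\otimes\mathrm{Id}_E\geq 0$ is Nakano semi-positive, and you then invoke Theorem~\ref{exthm} as a black box with a metric-independent constant and pass to the limit $\rho\to 0$ by normal families and Fatou. The paper does not twist the metric: it reruns the inductive scheme of Theorem~\ref{exthm} verbatim, but at each step solves the \emph{approximate} $\bar\partial$-equation of Proposition~\ref{exist}(ii), choosing $\rho=\tau^{-4}$ so that the curvature defect $\rho\eta_\varepsilon\leq\tau^{-2}$ plays the role of $\delta$, and then absorbs the residual term $h_{1,\tau,\varepsilon}$ by solving a further $\bar\partial$-equation on the Stein domain before taking $\varepsilon\to 0$, $\tau\to-\infty$. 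On a bounded Stein domain your route is cleaner and more modular; the reason the paper does it the harder way is that the same approximate-$\bar\partial$ machinery is what is actually needed in Theorem~\ref{exh} on a compact K\"ahler manifold, where no bounded strictly PSH $\psi$ exists and the twist is unavailable, so the Stein-domain proof serves as a rehearsal. Two small points worth recording if you flesh this out: (i) the metric-independence of $C$ in Theorem~\ref{exthm} does hold (the constants in Lemma~\ref{claim} involve only $D$, the $\delta_j$ and the mutual distances, with the metric entering only through the pointwise jet data that form $\|f\|_{\fk}$), but it should be cited explicitly rather than just asserted; and (ii) the normal-family/holomorphic-limit step needs the approximating metrics to stay uniformly nondegenerate on compacta away from $\{\det H=0\}\cup\fp$, which follows from uniform convergence $H_\rho\to H$ and continuity of $H$ but must be used to restrict the normal-family argument to that open set before extending the limit section across the measure-zero bad locus -- a point the paper itself handles only implicitly in its own weak-limit passage.
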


\begin{proof}
We may prove this Theorem using the same inductive
 process as  the proof of Theorem \ref{exthm},   the only difference is that we will now solve  an approximate $\bar{\partial}$-equation every time instead  when we solve a  $\bar{\partial}$-equations   
 during the proof of Theorem \ref{exthm}.  After taking limit we will get the same estimates as in Theorem \ref{exthm}.  For brevity  here we give a proof of the first step where $|\fk|=0,$ since the
 the rest procedure is a repeat modification process as the case $|\fk|=0.$
 
  Write a given $0$-jet section $f\in H^0 (M,K_M \otimes E/\kj_{p_1}\cdots \kj_{p_l})$  as a sum  $f=f_1 +\cdots+f_n,$  with $f_j\in H^0 (M,K_M \otimes E/\kj_{p_1}\cdots \kj_{p_l})$ satisfying (\ref{fid}).
  By symmetry positions of $f_1,\cdots,f_n,$ it suffices to extend $f_1$ and the others are extended in the same way.
  Let $\tilde{f}_1$ be a section of $H^0(M,K_M\otimes E)$ which is holomorphic lifting of $f_1$ 
  and let $a_{\tau,\el},b_{\tau,\el},v_{\tau,\el},\phi,\eta_{\el}$ be the functions as defined in (\ref{a}),(\ref{bc}) and (\ref{pet}) respectively, here in  (\ref{pet})
  we take $u(x)=x+\frac{1}{\delta_1}x^{\delta_1}$ with $0<\delta_1 <1.$  
     Let $$\widetilde{H}_1=H_{\rho}e^{-2n\sum_{j=1}^l \log |s_j|}$$ be a smooth Hermitian metric on $K_M\otimes E.$  Note here $H_{\rho}$ are a family smooth Hermitian metric on the strong Nakano nef bundle $E,$ and $$\langle [\Theta_{H_{\rho}},\Lambda]\vartheta,\vartheta\rangle \geq -\rho |\vartheta|_{H_{\rho}}^2$$ 
 for any $E$-valued $(n,1)$-form $\vartheta$ by (\ref{nakanoid}).  Hence the operator
   $B_{\el}=[ \eta_{\el} \Theta_{\widetilde{H}_1} -id'd''\eta_{\el}-i\lmd d'\eta_{\el}\wedge d''\eta_{\el},\Lm]$
   satisfy
$$\langle B_{\el}\vartheta,\vartheta\rangle  \geq \langle [a_{\tau,\el}(\phi)id'\phi\wedge d''\phi),\Lm]\vartheta,\vartheta\rangle \geq a_{\tau,\el}(\phi) |(d''\phi)^* \vartheta|_{H_{\rho}}^2-\rho\eta_{\el} |\vartheta|_{H_{\rho}}^2. $$
Note $b_{\tau,\el}(t)=0$ for $t<\tau -\frac{1-\el}{2}$ and $b_{\tau,\el}(t)=1$ for $t>\tau +\frac{1-\el}{2}.$ Hence
 if $\tau<-2-\el,$ then $v_{\tau,\el}(t)=t$ for $t\geq 0$   and $v_{\tau,\el}(t)\leq -(\tau-\frac{1-\el}{2})$ for $t<0.$  Hence for $\tau\ll 0$
 \begin{equation}\eta_{\el}=u(-v_{\tau,\el}(\phi))\leq [-(\tau-\frac{1-\el}{2})]-\frac{1}{\delta_1}[-(\tau-\frac{1-\el}{2})]^{\delta_1}\leq \tau^2.\label{taub}\end{equation}
Take $$\rho=\frac{1}{\tau ^4}.$$  
Then for $\tau\ll 0$ we have  
$$\rho\eta_{\el} \leq \frac{1}{\tau^2}.$$
Let
 $$L_{1,\tau,\el}=D''((1-b_{\tau,\el}(\phi))\tilde{f}_1)=(-a_{\tau,\el}(\phi)d''\phi)\tilde{f}_1. $$
If $\tau\ll 0$ such that $\{p_2,\cdots,p_l\}\subset M\backslash \phi^{-1}([\tau-\frac{1-\el}{2},\tau+\frac{1-\el}{2}]),$ then
$$L_{1,\tau,\el}(p_2)=\cdots =L_{1,\tau,\el}(p_l)=0.$$
By (ii) of Proposition \ref{exist}, there exist $K_{1,\tau,\el}\in L^2(M,\Lambda^{n,0} T^*_M\otimes E)$ and $ h_{1,\tau,\el}\in L^2(M,\Lambda^{n,1} T^*_M\otimes E)$ such that
$$D'' K_{1,\tau,\el}+\frac{1}{\tau}h_{1,\tau,\el}=L_{1,\tau,\el},$$ and
$$ \int_{M}\frac{(\eta_{\el} +\lambda_{\el}^{-1})^{-1}|K_{1,\tau,\el}|^2_{H_{\rho}}}{(|s_1|\cdots |s_l|)^{2n}} + \int_{M}\frac{|h_{1,\tau,\el}|^2_{H_{\rho}}}{(|s_1|\cdots |s_l|)^{2n}}  \leq 2\int_{M} \frac{a_{\tau,\el}(\phi)|\tilde{f}_1 |^2_{H_{\rho}}}{(|s_1|\cdots |s_l|)^{2n}} .$$
By Lemma \ref{claim} (using the similar proof)  just like the estimate (\ref{fk0}), the limit $\lim_{\tau\rw -\infty}\lim_{\el\rw 0}\int_{M} \frac{a_{\tau,\el}(\phi)|\tilde{f}_1 |^2_{H_{\rho}}}{(|s_1|\cdots |s_l|)^{2n}}$
is bounded, hence the two integrals of L.H.S. of the above inequality are uniformly bounded. 
Since $$D''h_{1,\tau,\el}=\tau\cdot D''( L_{1,\tau,\el}-D'' K_{1,\tau,\el}) =0,$$ and $h_{1,\tau,\el}$ is square integrable with respect to the metric $$H_{\rho}e^{-2n\sum_{j=1}^l \log |s_j|}$$
from the estimate in the above inequality. Hence by the existence theorem of $\bar{\partial}$-equation on the Stein domain, there exists $E$-valued  function $g_{1,\tau,\el}$ such that
 $$D''g_{1,\tau,\el}=h_{1,\tau,\el}$$ 
 with the estimate
  $$\int_{M}\frac{|g_{1,\tau,\el}|^2_{H_{\rho}}}{(|s_1|\cdots |s_l|)^{2n}}  \leq C\int_{M}\frac{|h_{1,\tau,\el}|^2_{H_{\rho}}}{(|s_1|\cdots |s_l|)^{2n}} $$
  by H\"ormander $L^2$-estimate.  Then
  $$D''((1-b_{\tau,\el}(\phi)\tilde{f}_1-K_{1,\tau,\el}-\frac{1}{\tau}g_{1,\tau,\el})=0.$$
 Let
 $$f_{1,\tau,\el}=(1-b_{\tau,\el}(\phi))\tilde{f}_1-K_{1,\tau,\el}-\frac{1}{\tau}g_{1,\tau,\el}.$$
Then $D''f_{1,\tau,\el}=0$ and  $f_{1,\tau,\el}$ is holomorphic section. It in particular means that $K_{1,\tau,\el}+\frac{1}{\tau}g_{1,\tau,\el}$ is a smooth section.

$$\begin{array}{rcl}&& \int_{M}\frac{(\eta_{\el} +\lambda_{\el}^{-1})^{-1}}{(|s_1|\cdots |s_l|)^{2n}}|f_{1,\tau,\el}-(1-b_{\tau,\el}(\phi))\tilde{f}_1 -\frac{1}{\tau}g_{1,\tau,\el}|^2_{H_{\rho}} \\
 &\leq & 2\int_{M} \frac{a_{\tau,\el}(\phi)}{(|s_1|\cdots |s_l|)^{2n}}|\tilde{f}_1 |^2_{H_{\rho}}.\\
 \end{array} $$
So  as $\el\rw 0,$ $f_{1,\tau,\el},K_{1,\tau,\el},g_{1,\tau,\el}$ and  $h_{1,\tau,\el}$ have subsequences
which have weak limits in $L^2$, denoted by  $\hat{f}_{1,\tau},\hat{K}_{1,\tau}, \hat{g}_{1,\tau}$ and $\hat{h}_{1,\tau}$ respectively,  note $\hat{f}_{1,\tau}=(1-b_{\tau}(\phi))\tilde{f}_1
-\hat{K}_{1,\tau}-\frac{1}{\tau}g_{1,\tau}$ is  a holomorphic 
  section.
 
  Let $b_{\tau}=\lim_{\el\rw 0} b_{\tau,\el}$ and  $v_{\tau}=\lim_{\el\rw 0} v_{\tau,\el},$ then 
  \begin{equation}\lim_{\el\rw 0}(\eta_{\el} +\lambda_{\el}^{-1})^{-1}=\frac{1}{\delta_1 (1-\delta_1)}(\log\frac{1}{|s_1|})^{\delta_1}\label{btau}\end{equation}
   for $\tau\ll 0.$ 
  Taking weak limits of the above two inequalities after passing to subsequences,
   just like in the proof of Theorem \ref{exthm},
  we have
\begin{equation} \int_{M }\frac{|\hat{K}_{1,\tau}|^2_{H_{\rho}}} {\Big({(|s_1|\cdots |s_l|)^{2n}}\Big)(\log\frac{1}{|s_1|})^{\delta_1}}   \leq \frac{2}{\delta_1 (1-\delta_1)}\int_{M } \frac{a_{\tau}(\phi)|\tilde{f}_1 |^2_{H_{\rho}}}{(|s_1|\cdots |s_l|)^{2n}}\label{repat}\end{equation}
and $$\int_{M}\frac{|g_{1,\tau}|^2_{H_{\rho}}}{(|s_1|\cdots |s_l|)^{2n}}  \leq C\int_{M}\frac{|h_{1,\tau}|^2_{H_{\rho}}}{(|s_1|\cdots |s_l|)^{2n}} \leq 2C\int_{M } \frac{a_{\tau}(\phi)|\tilde{f}_1 |^2_{H_{\rho}}}{(|s_1|\cdots |s_l|)^{2n}}.$$
Since $\log\frac{1}{|s_1|}>1$  we know the weighted $L^2$-integrals
$$\int_{M}\frac{|\hat{K}_{1,\tau,\el}+\frac{1}{\tau}g_{1,\tau,\el}|^2_{H_{\rho}}}{(|s_1|\cdots |s_l|)^{2n}}$$
 with resect to the weight function $\frac{1}{(|s_1|\cdots |s_l|)^{2n}}$ 
are uniformly bounded.
by counting the orders of the singularity at $p_1,\cdots,p_l$ of the denominator we have
$$(\hat{K}_{1,\tau,\el}+\frac{1}{\tau}g_{1,\tau,\el})(p_1) =\cdots=(\hat{K}_{1,\tau,\el}+\frac{1}{\tau}g_{1,\tau,\el})(p_l) =0.$$

Note $\rho=\frac{1}{\tau^4}.$ By definition of strong Nakano nef vector bundle $\{H_{\rho}\}$ have a subsequence convergent to a limit Hermitian metric $H$ with
Nakano pseudo effective curvature current. After further taking subsequences  we may assume  $\hat{f}_{1,\tau}$ is weakly convergent to $\hat{f}_{1}.$
Note $\hat{f}_{1,\tau}=(1-b_{\tau}(\phi))\tilde{f}_1
-\hat{K}_{1,\tau}-\frac{1}{\tau}g_{1,\tau}$ and  any subseqences of $(1-b_{\tau}(\phi))\tilde{f}_1$ and $\frac{1}{\tau}g_{1,\tau}$ are always weakly convergent to zeros.  Hence  take weak limit as $\tau\rw -\infty$
of the inequality (\ref{repat}) we obtain 
$$
 \int_{M }\frac{|\hat{f}_1|^2_{H}}{{(|s_1|\cdots |s_l|)^{2n}}\Big(\log\frac{1}{|s_1|}\Big)^{\delta_1}}
\leq C_1 |\tilde{f}_1|_H^2(p_1).
$$
 where $C_1$ is a constant depending only on $\delta_1,$ the diameter of $M,$ and the   minimal distance among the points in $p_1,\cdots,p_l.$ 
 
 Without loss of generality,   in the following we assume $f_{1,\tau,\el},K_{1,\tau,\el},g_{1,\tau,\el}$ and  $h_{1,\tau,\el}$ themselves (not passing to subsequences) are  weakly convergent
  as $\el\rw 0$ and $\tau\rw -\infty.$
 
Since $\tilde{f}_1(p_j)=K_{1,\tau,\el}(p_j)+\frac{1}{\tau}g_{1,\tau,\el}(p_j)=0$ for $j\geq 2$ we have
$f_{1,\tau,\el}(p_j)=((1-b_{\tau,\el}(\phi))\tilde{f}_1(p_j)-K_{1,\tau,\el}(p_j)-\frac{1}{\tau}g_{1,\tau,\el}(p_j)=0$ for $j\geq 2$  and hence $\hat{f}_1(p_j)=\lim_{\tau\rw -\infty}\lim_{\el\rw 0}f_{1,\tau,\el}(p_j)=0.$
Note $b_{\tau,\el}(-\infty)=0,$  therefore $f_{1,\tau,\el}(p_1)=((1-b_{\tau,\el}(\phi))\tilde{f}_1(p_1)-K_{1,\tau,\el}(p_1)-\frac{1}{\tau}g_{1,\tau,\el}(p_1)=\tilde{f}_1(p_1)=f(p_1),$ and hence $\hat{f}_1(p_1)=\lim_{\tau\rw -\infty}\lim_{\el\rw 0}f_{1,\tau,\el}(p_j)=f_1(p_1).$ 

Now let $F=\hat{f}_1 +\cdots +\hat{f}_l.$ Then $F(p_j)=f(p_j)$ for $j=1,\cdots,l$ and $|F|_H^2\leq l(|\hat{f}_1|^2_H+\cdots+|\hat{f}_l|^2_H ).$
 By the inequality of arithmetic and geometric means we have
$$\int_{M }\frac{|F|^2_{H}}{\Big({\prod}_{j=1}^l(|s_j|^{2n}\log\frac{1}{|s_j|})^{\delta_j}\Big)} 
\leq C_2\sum|f|_H^2(p_j)=C_2\Vert f\Vert^2_0,
$$
 $C_2$ are finite positive  constants depending only on $\delta_j$ and the minimal  distance between any two points among $p_1,\cdots,p_l.$ This finishes the proof of the case $|\fk|=0.$

\end{proof}

\begin{corollary}\label{excor1}
Let $M$ be a  bounded Stein domain of diameter $D,$ and  $E$  a  strong Nakano nef holomorphic vector bundle   on $M,$  with a Nakano pseudo effective limit Hermitian metric $H.$   Let $s_j (z)=z-w_j$ for $j=1,\cdots,l.$  Then for
 any holomorphic jet section $f\in J^{\fk}_{\fp} (K_M \otimes E)$ satisfying $$\Vert f\Vert ^2_{\fk}<\infty,$$ there exists holomorphic section $F\in H^0(M,K_M \otimes E) $  such that $J^{\fk}_{\fp} (F) =f$ and
$$\int_M \frac{|F|^2_H}{{\prod}_{j=1}^l (1+|s_j|^2)^{n}} \leq C\Vert f\Vert ^2_{\fk}$$
 where $C$ is a finite positive  constant depending only on the diameter of $M$ and the minimal  distance between any two points among $p_1,\cdots,p_l.$
\end{corollary}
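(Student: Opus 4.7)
The plan is to derive Corollary \ref{excor1} from Theorem \ref{exth} by a direct weight-comparison argument. A preliminary remark: the two statements use different notations for the local coordinates about the $p_j$'s --- Theorem \ref{exth} sets $s_j = (z-w_j)/(27D)$, whereas the corollary sets $s_j = z-w_j$. The extension $F$ depends only on the intrinsic jet data (bundle, metric, points, jet section), so I will invoke Theorem \ref{exth} and then reinterpret its weighted bound in the corollary's notation.

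First, I would apply Theorem \ref{exth} with a fixed admissible choice $\delta_1 = \cdots = \delta_l = 1/2$ to produce a holomorphic section $F \in H^0(M, K_M \otimes E)$ extending $f$ with
$$\int_M \frac{|F|^2_H}{\prod_{j=1}^l \bigl(|(z-w_j)/(27D)|^{2n}\bigl(\log (27D/|z-w_j|)\bigr)^{1/2}\bigr)} \leq C_1 \|f\|^2_{\fk},$$
where $C_1$ depends only on $D$ and the minimal distance between the points $p_j$.

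The key observation is that, on the bounded domain $M$, the denominator of this weighted integral is bounded above by an explicit constant. Since $|z-w_j| \leq D$, one has $|(z-w_j)/(27D)| \leq 1/27$, and a short calculation shows that the function $r \mapsto r^{2n}(\log 1/r)^{1/2}$ is monotonically increasing on $(0, 1/27]$: its unique critical point $r_\ast = e^{-1/(4n)}$ lies well above $1/27$ for every $n \geq 1$. Thus the function attains its maximum $(1/27)^{2n}(\log 27)^{1/2}$ at the right endpoint, and the product over $j$ is uniformly bounded by a constant $A = A(n,l)$. Consequently the reciprocal weight is bounded below by $1/A$, so Theorem \ref{exth}'s bound upgrades to the unweighted $L^2$-estimate
$$\int_M |F|^2_H \leq A \, C_1 \, \|f\|^2_{\fk}.$$
Since $\prod_{j=1}^l (1 + |z-w_j|^2)^n \geq 1$ trivially on $M$, this yields
$$\int_M \frac{|F|^2_H}{\prod_{j=1}^l (1+|s_j|^2)^n} \leq \int_M |F|^2_H \leq A \, C_1 \, \|f\|^2_{\fk},$$
which is the required bound with constant $C = A \, C_1$ depending only on $D$ and the minimum distance among the $p_j$.

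There is no substantive obstacle here; the corollary is essentially a convenient reformulation of Theorem \ref{exth} with the geometric weight $\prod(1+|s_j|^2)^n$, which on a bounded domain differs from the constant weight $1$ only by a bounded factor. The main technical check is the monotonicity of $r \mapsto r^{2n}(\log 1/r)^{1/2}$ on the interval $(0, 1/27]$, which ensures that the singular weight of Theorem \ref{exth} is in fact uniformly bounded above. This formulation is well-suited for the global patching argument in the subsequent Subsection \ref{subsub3}, where local Stein extensions are glued into a holomorphic section on a compact K\"ahler manifold via approximate $\bar{\partial}$-equations.
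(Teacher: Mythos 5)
Your argument is correct, but it takes a genuinely different (and simpler) route than the paper's. The paper proves a pointwise weight inequality: it sets $h(x)=(1+x^2)^{n}-x^{2n}\bigl(\log\frac{27D}{x}\bigr)^{\delta_j}$, shows $h>1/2$ near $x=0$, and then selects each $\delta_j$ small enough (via the formula $\delta_j=\inf_{D>x\geq\kappa}\frac{n\log(1+1/x^2)}{\log(27D/\kappa-1)}$) so that $h(x)\geq 0$ on all of $[0,D]$; this yields the pointwise domination $\frac{1}{\prod_j(\frac{1}{(27D)^2}+|\frac{s_j}{27D}|^2)^n}\leq\frac{1}{\prod_j|\frac{s_j}{27D}|^{2n}(\log\frac{27D}{|s_j|})^{\delta_j}}$, from which Corollary~\ref{excor1} follows by substituting into Theorem~\ref{exth}, and Corollary~\ref{excor2} is then deduced from Corollary~\ref{excor1} using that $\prod(1+|s_j|^2)^n$ is bounded above on $M$. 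You instead fix the innocuous choice $\delta_j=1/2$, observe that the \emph{entire} Theorem~\ref{exth} denominator $\prod_j|\tilde s_j|^{2n}(\log\frac{1}{|\tilde s_j|})^{1/2}$ is uniformly bounded above on the range $|\tilde s_j|\in(0,1/27]$ (via the monotonicity of $r\mapsto r^{2n}(\log\frac1r)^{1/2}$ below its critical point $e^{-1/(4n)}>1/27$), and thereby obtain the unweighted estimate $\int_M|F|^2_H\leq AC_1\|f\|^2_{\fk}$ \emph{directly}, from which Corollary~\ref{excor1} is trivial because $\prod(1+|s_j|^2)^n\geq1$. In other words, you derive Corollary~\ref{excor2} first and Corollary~\ref{excor1} as a trivial consequence, in the opposite order to the paper, and you avoid the delicate balancing of $\delta_j$ entirely. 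What the paper's route buys is a sharper pointwise comparison between the two weights (showing the singular weight of the theorem genuinely dominates the geometric weight $\prod(1+|s_j|^2)^n$, with the $\delta_j$ tuned to make the two weights comparable rather than both trivially bounded); what your route buys is robustness and brevity, at the price of making explicit that on a bounded domain the two corollaries are equivalent up to a constant and carry no more information than the unweighted $L^2$ bound. Both are valid derivations from Theorem~\ref{exth}, and both produce constants depending only on $D$, the minimal mutual distance, and implicitly $l$ and $n$, as required.
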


\begin{corollary}\label{excor2}
Let $M$ be a  bounded Stein domain of diameter $D,$ and  $E$  a  strong Nakano nef holomorphic vector bundle   on $M,$  with a Nakano pseudo effective limit Hermitian metric $H.$  Then for
 any holomorphic jet section $f\in J^{\fk}_{\fp} (K_M \otimes E)$ satisfying $$\Vert f\Vert ^2_{\fk}<\infty,$$ there exists holomorphic section $F\in H^0(M,K_M \otimes E) $  such that $J^{\fk}_{\fp} (F) =f$ and
$$\int_M |F|^2_H \leq C\Vert f\Vert ^2_{\fk}$$
 where $C$ is a finite positive  constant depending only on the diameter of $M$ and the minimal  distance between any two points among $p_1,\cdots,p_l.$
\end{corollary}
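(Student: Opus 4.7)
The plan is to obtain Corollary \ref{excor2} as an immediate consequence of Corollary \ref{excor1}, using the fact that $M$ has finite diameter so the weight $\prod_{j=1}^{l}(1+|s_j|^2)^n$ appearing in Corollary \ref{excor1} is uniformly bounded above on $M$.

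First I would apply Corollary \ref{excor1}. Since $E$ is a strong Nakano nef holomorphic vector bundle on the bounded Stein domain $M$, with Nakano pseudoeffective limit Hermitian metric $H$, and since $f \in J^{\fk}_{\fp}(K_M\otimes E)$ has $\|f\|^2_{\fk} < \infty$, Corollary \ref{excor1} produces a holomorphic extension $F \in H^0(M, K_M\otimes E)$ with $J^{\fk}_{\fp}(F) = f$ satisfying
\begin{equation*}
\int_M \frac{|F|^2_H}{\prod_{j=1}^{l}(1+|s_j|^2)^n} \leq C_1 \|f\|^2_{\fk},
\end{equation*}
where in the notation of Corollary \ref{excor1} one has $s_j(z) = z - w_j$ and the constant $C_1$ depends only on the diameter $D$ of $M$ and on the minimal mutual distance between the points $p_1,\ldots,p_l$.

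Next I would remove the weight. Because $M$ has diameter $D$, every $z \in M$ satisfies $|s_j(z)| = |z - w_j| \leq D$ for each $j = 1, \ldots, l$. Hence
\begin{equation*}
\prod_{j=1}^{l}(1+|s_j(z)|^2)^n \leq (1+D^2)^{nl} \qquad\text{for all } z \in M,
\end{equation*}
so that pointwise on $M$
\begin{equation*}
\frac{1}{(1+D^2)^{nl}} \leq \frac{1}{\prod_{j=1}^{l}(1+|s_j|^2)^n}.
\end{equation*}
Multiplying by the nonnegative function $|F|^2_H$ and integrating yields
\begin{equation*}
\int_M |F|^2_H \leq (1+D^2)^{nl} \int_M \frac{|F|^2_H}{\prod_{j=1}^{l}(1+|s_j|^2)^n} \leq (1+D^2)^{nl}\, C_1\, \|f\|^2_{\fk},
\end{equation*}
which is the desired estimate with $C = (1+D^2)^{nl} C_1$, a constant depending only on $D$ and on the minimal mutual distance among $p_1, \ldots, p_l$.

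There is essentially no obstacle at this stage: all the analytic difficulty, namely the construction of the extension in the presence of only a singular limit Hermitian metric on the strong Nakano nef bundle $E$ via solving approximate $\bar{\partial}$-equations and passing to weak $L^2$-limits, has already been carried out in the proofs of Theorem \ref{exth} and Corollary \ref{excor1}. The only ingredient added here is boundedness of $M$, which trivially bounds the polynomial weight and lets us pass from the weighted $L^2$-estimate to the unweighted one at the cost of a factor depending on $D$, $n$, and $l$.
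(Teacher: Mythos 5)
Your proof is correct and follows exactly the route the paper takes: the paper reduces Corollary \ref{excor2} to Corollary \ref{excor1} by noting that on a domain of finite diameter the weight $\prod_{j=1}^{l}(1+|s_j|^2)^n$ is bounded above, and you simply make that one-line reduction explicit with the bound $(1+D^2)^{nl}$.
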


\noindent {\it Proof of Corollary \ref{excor1} and \ref{excor2}}.
If $s_j(z)=z-w_j$ and $D$ is diameter of $M,$ then $|\frac{s_j}{27D}|<1$ and $ \log \frac{1}{\frac{|s_j|}{27D}}\geq 1.$
 Set $h(x)=(1+x^2)^{n}-x^{2n}\Big(\log \frac{1}{|\frac{x}{27D}|}\Big)^{\delta_j}$  with $0<x<D.$ Since $\lim_{x\rw 0^+} h(x) =1,$ there exist $\kappa>0$ such that $x\leq \kappa$ then $h(x)>1/2$ for any $\delta_j \geq 0.$ Now assume $x\geq \kappa,$ then $\frac{27D}{x}-1\leq \frac{27D}{\kappa}-1$ and
 $h(x)\geq x^{2n}\Big((1+\frac{1}{x^2})^{n}-(\frac{27D}{\kappa}-1)^{\delta_j}\Big).$ Take
 $$\delta_j =\inf_{D>x\geq \kappa} \frac{m\log(1+\frac{1}{x^2})}{\log (\frac{27D}{\kappa}-1)}=\frac{m\log(1+\frac{1}{D^2})}{\log (\frac{27D}{\kappa}-1)}$$
 then $h(x)\geq 0$ for any $0\leq x\leq D.$  Now for each $j$ we take  a fixed $\delta_j$ as defined above for $j=1,\cdots,l$ in Theorem \ref{exthm}, then
 we have
$$\frac{|F|^2_H}{{\prod}_{j=1}^l (\frac{1}{27^2D^2}+|\frac{s_j}{27D}|^2)^{n}}\leq \frac{|F|^2_H}{{\prod}_{j=1}^l \Big(|\frac{s_j}{27D}|^{2n}\Big(\log \frac{1}{|\frac{s_j}{27D}|}\Big)^{\delta_j}\Big)},$$
 from it Corollary \ref{excor1}  follows immediately from Theorem \ref{exthm},  and Corollary \ref{excor2} follows directly from Corollary \ref{excor1}.
 \hfill  $\Box$
 
\subsection{Extension theorems  on compact K\"ahler manifolds}\label{subsub3}

In this subsection we denote $M$ a compact K\"ahler manifold and 
$E$  a strong Nakano nef vector bundle on  $M.$ The family  of  a smooth Hermitian metrics on $E$ is denoted by $\{H_{\rho}|\rho>0\}$  and its limit Hermitian metric is denoted by $H.$

We cut the  compact K\"ahler manifold $M$ into small bounded Stein open subsets and to extend  a given jet section $f$ to each Stein open subset which insect the support of $f$ in a compatible way via using the Theorem \ref{exth}  and Lemma \ref{exlem}  established in Section \ref{subsub2},
then glue these extended sections to a smooth global section on $M.$ After some small modification of this globally defined smooth section via solving $\bar{\partial}$-equation, we will obtain the expected  holomorphic section defined on $M$ which is an extension of $f.$

Suppose the support of the jet section $f$ is the  set $\{p_1,\cdots,p_l\}$  contained in a Stein open ball $U$ of $M$ whose diameter is  $D.$
Choose holomorphic local coordinate charts of $M,$ and denote the coordinate of $p_1,\cdots,p_l$ by $w_1,\cdots,w_l.$ Let $(z_1,\cdots,z_n)$ be the holomorphic coordinate on $U.$ Denote  $s_j (z)=\frac{z-w_j}{27D}$ for
   $j=1,\cdots,l$  as at the beginning of  Section \ref{subsub2}.

 \begin{theorem}\label{exh} Let $M$ be a  compact K\"ahler manifold and  $E$  a  strong Nakano nef holomorphic vector bundle   on $M,$  with a Nakano pseudo effective limit Hermitian metric $H.$  Let
 $f\in J^{\fk}_{\fp} (K_M \otimes E)$  be a holomorphic jet section whose support is a set of finite distinct points locate in a  stein open ball $U$ of $M,$ and
  satisfying $$\Vert f\Vert ^2_{\fk}<\infty.$$ Then there exists holomorphic section $F\in H^0(M,K_M \otimes E) $  such that $J^{\fk}_{\fp} (F) =f$ and there is a  finite positive  constant $C$ with
$$\int_M \frac{|F|^2_H}{{\prod}_{j=1}^l \Big(|s_j|^{2n}\Big(\log \frac{1}{|s_j|}\Big)^{\delta_j}\Big)} \leq C\Vert f\Vert ^2_{\fk}, \hskip 3cm {\rm (C_{\fk,j})}$$
where $\delta_j >0 $ are  any positive constants.
\end{theorem}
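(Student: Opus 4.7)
The strategy is to reduce to the Stein case of Theorem~\ref{exth} by a cutoff-plus-$\bar{\partial}$-correction argument. First, I would enlarge the Stein ball slightly to $V$ with $U \Subset V \subset M$ still Stein, and apply Theorem~\ref{exth} on $V$ to obtain a local holomorphic extension $\widetilde{F} \in H^0(V, K_M \otimes E)$ of $f$ satisfying the weighted $L^2$ bound there. Choose a smooth cutoff $\chi$ compactly supported in $V$ and identically $1$ on a neighborhood of $\overline{U}$; then $\chi\widetilde{F}$, extended by zero, is a smooth section of $K_M \otimes E$ over $M$ that already realizes the prescribed jet $f$ at $\fp$. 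The obstruction to its holomorphicity is the $E$-valued $(n,1)$-form $g := D''(\chi \widetilde{F}) = (\bar{\partial}\chi) \wedge \widetilde{F}$, which is smooth, $D''$-closed, and supported in $V \setminus \{\chi \equiv 1\}$, hence away from $\fp$.

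Next, I would solve $D'' u = g$ on $M$ with $u$ vanishing to order $\ge k_j+1$ at each $p_j$, so that $F := \chi \widetilde{F} - u$ is a global holomorphic extension with $J^{\fk}_{\fp}(F) = f$. To force the required vanishing order, I introduce the same kind of weight used in the proof of Theorem~\ref{exth}: modify the Hermitian metric to $\widetilde{H}_\rho := H_\rho \, e^{-2\sum_j (n+k_j) \log |s_j|}$ in a coordinate neighborhood of $\fp$, patched smoothly to $H_\rho$ away from $\fp$. The $L^2$-integrability of $u$ against this weight then forces $\partial^\al u(p_j) = 0$ for $|\al| \le k_j$ by counting pole orders, exactly as in the Stein proof. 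Since the support of $g$ avoids $\fp$, the integral $\int_M |g|^2_{\widetilde{H}_\rho}$ is finite and in fact uniformly bounded in $\rho$ because $\widetilde F$ is smooth on $V$ and $\chi$ is fixed.

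Because the curvature of the limit metric $H$ is only Nakano pseudo-effective, I cannot solve $D''u = g$ directly; instead I would apply part~(ii) of Proposition~\ref{exist} to the smooth family $\widetilde H_\rho$ with the same auxiliary functions $\eta_\el, \lambda_\el$ and the choice $u(x) = x + \delta_j^{-1} x^{\delta_j}$ as in the proof of Theorem~\ref{exth}, tuning $\rho = 1/\tau^4$ so that the negative curvature contribution $\rho \eta_\el$ is absorbed. This yields solutions $u_\rho \in L^2(M, \Lambda^{n,0}T^*_M \otimes E)$ and error terms $h_\rho$ satisfying $D''u_\rho + \sqrt{\rho}\, h_\rho = g$ with uniform weighted $L^2$ bounds. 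Taking weak $L^2$ limits along a subsequence (as $\el \to 0$ and then $\rho \to 0$), using that $\tfrac{1}{\sqrt{\rho}}$ forces $h_\rho \to 0$ weakly against bounded test sections, one obtains a limit $u$ with $D'' u = g$ and the requisite vanishing at $\fp$. Setting $F = \chi \widetilde{F} - u$ then gives a holomorphic section of $K_M \otimes E$ extending $f$.

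The main obstacle is carrying out the weak-limit argument rigorously when the error term $h_\rho$ is present and the metric $\widetilde H_\rho$ is singular along a locus (the patching region for the cutoff weight). One must verify, analogously to Lemma~\ref{claim}, that the right-hand side integrals remain uniformly controlled in $\rho$ and $\tau$, and that the weak limit $u$ indeed inherits the pointwise vanishing dictated by the weight's singularity. Once $F$ is constructed, the estimate $(C_{\fk,j})$ follows by combining the local estimate from Theorem~\ref{exth} on $V$ (which controls $\chi \widetilde F$ near $\fp$) with the bound on $u$ (whose weight $\prod_j |s_j|^{2n}(\log 1/|s_j|)^{\delta_j}$ is bounded above and below by positive constants away from $\fp$), exactly as in the inductive combination of $(A_\fk)$ and $(B_{\fk,j})$ in the Stein case.
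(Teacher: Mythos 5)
Your proposal takes a genuinely different route from the paper, and that route has a structural gap at the $\bar{\partial}$-solving step.

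Your plan is to extend on a single slightly larger Stein ball $V$, multiply by a \emph{fixed} cutoff $\chi$ that is $\equiv 1$ near $\overline U$, and correct the error $g = (\bar{\partial}\chi)\wedge\widetilde F$ by solving a $\bar{\partial}$-equation globally. The difficulty is that the only source of positivity in the twisted Bochner--Kodaira inequality of Proposition~\ref{kno} is the term $a_{\tau,\el}(\phi)\,|(d''\phi)^*\vartheta|^2$ with $\phi=\log|s_1|$ (or its analogue centered at one of the $p_j$). To exploit that term via Proposition~\ref{exist}, the right-hand side $g$ must decompose as $g = d''\phi\wedge g_0 + g_1$ with the remainder $g_1$ shrinking as $\tau\to-\infty$. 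Your $g=(d''\chi)\wedge\widetilde F$ is supported on a \emph{fixed} annulus $\{0<\chi<1\}\subset V$ at a fixed distance from $\mathfrak{p}$; it is not proportional to $d''\phi$, so it must be placed entirely in the $g_1$ slot. Then the estimate of Proposition~\ref{exist}(ii) reads $\int |u_\rho|^2 + \int|h_\rho|^2 \le \frac{2}{\delta}\int_M |g|^2$, and you need $\delta\asymp\rho\eta_\el\to 0$ to absorb the negative curvature $-\rho$, so the bound blows up. Consequently $\sqrt{\delta}\,h_\rho$ stays uniformly bounded but does not go to zero weakly, and the limit equation is $D''u = g - h_\infty$ for some nonzero $h_\infty$, not $D''u = g$. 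The negative curvature of $H_\rho$ genuinely obstructs the Hörmander step when the cutoff is not coupled to the weight.

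The paper's Lemma~\ref{exle} avoids exactly this. It proceeds by induction on $|\mathfrak{k}|$, so that at each step the excess jet $f - J^{\mathfrak{k}-1}_{\mathfrak{p}}(F_{\mathfrak{k}-1})$ is concentrated at a single point $p_1$. It extends that excess on \emph{each} Stein set $U_\alpha$ of a finite cover to sections $f_\alpha$, glues them with a partition of unity to $\tilde f = \sum_\alpha\theta_\alpha f_\alpha$, and then applies the Ohsawa--Takegoshi cutoff $1-b_{\tau,\el}(\phi)$. The resulting error splits into $-a_{\tau,\el}(\phi)\,d''\phi\wedge\tilde f$ (a $d''\phi$-multiple, handled by the twist) plus $(1-b_{\tau,\el}(\phi))D''\tilde f$ (the $g_1$ remainder). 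The crucial point is that $D''\tilde f|_{U_\beta}=\sum_\alpha(f_\alpha-f_\beta)\bar{\partial}\theta_\alpha$ with each $f_\alpha - f_\beta$ vanishing to order $k_1$ at $p_1$, since both $f_\alpha$ and $f_\beta$ extend the same jet. This forces the remainder term to scale like $e^{2\tau}$, so $\frac{1}{\delta}\int|g_1|^2 \asymp \tau^2 e^{2\tau}\to 0$, and the weak limit does close the equation. So the gluing-by-partition-of-unity is not merely a device to define a smooth candidate; it is what makes the $g_1$ remainder shrink fast enough to be absorbed. Your single-cutoff construction produces a remainder that does not shrink, and there is no way to tune $\rho$, $\tau$, or $\delta$ to recover.

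If you want to salvage a one-step (non-inductive) argument, you would need a cutoff that shrinks towards $\mathfrak{p}$ as $\tau\to-\infty$, coupled to a weight $\phi$ singular at all of $\mathfrak{p}$ simultaneously, and you would need to verify the analogue of Lemma~\ref{claim}. In effect this reproduces the paper's OT cutoff; the induction is then a convenient way to reduce to a single point at a time rather than dealing with multi-pole weights.
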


\begin{proof}  Like the proof of Theorem \ref{exthm},  it suffices to prove the following

\begin{lemma}\label{exle}Under the assumptions in Theorem \ref{exh}, then for
 any holomorphic jet section $f\in J^{\fk}_{\fp} (K_M \otimes E)$ satisfying $$\Vert f\Vert ^2_{\fk}<\infty,$$ there exists holomorphic section $F\in H^0(M,K_M \otimes E) $  such that $J^{\fk}_{\fp} (F) =f$ and for any $1\leq j\leq l$,  there is a  finite positive  constant $C$ such that
 $$\int_M \frac{|F|^2_H}{\Big(|s_1|\cdots |s_l|)^{2n}\Big(\log \frac{1}{|s_j|}\Big)^{\delta_j}} \leq C\Vert f\Vert ^2_{\fk},\hskip 3cm {\rm (D_{\fk,j})}$$
where $\delta_j >0 $ are  any positive constants.
\end{lemma}

We will use  almost  the same induction procedure  as  the proof of Theorem \ref{exthm},  
but solve different approximate $\bar{\partial}$-equations from that in the proof of Theorem \ref{exth}, then take limits.
Since in the proof
of Theorem \ref{exth} we have done the inductive step  $|\fk|=0$ a second time,
here we skip the step $|\fk|=0.$ The essential technical ideas for $|\fk|=0$ and the proof we give here for the inductive step from $|\fk|-1$ to $|\fk|$ is the same.

Assume  Lemma \ref{exle} have been proved for $|\fk|=k_1+\cdots+k_l$ varies from $0$ to $|\fk|-1.$ 
Without loss of generality we assume $k_1\geq 1.$  Let $g$ be the image of $f$ in $H^0(M,\frac{K_M\otimes E} {\kj^{k_1 }_{p_1}\kj^{k_2+1 }_{p_2}\cdots \kj^{k_l+1}_{p_l}}).$
 By the induction hypothesis, there exists a lifting section $F_{\fk-1}\in H^0(M,K_M\otimes E)$ such that $J^{\fk-1}_p (F_{\fk-1})=g$ and
\begin{equation}\int_M \frac{|F_{\fk-1}|^2_H}{\Big(|s_1|\cdots |s_l|)^{2n}\Big(\log \frac{1}{|s_j|}\Big)^{\delta_j}} \leq C_1\Vert f\Vert ^2_{\fk-1}.\label{ind23}\end{equation}
Here we denote $\fk-1=(k_1-1,k_2,\cdots,k_l)$ for brevity.
We will construct holomorphic lifting section of  $f-J^{\fk-1}_{\fp} (F_{\fk-1})$ via solving $\bar{\partial}$-equation
 by using H\"omander's $L^2$-theory. Without loss generality we assume $f-J^{\fk-1}_{\fp} (F_{\fk-1})\not=0$ throughout this section, in particular $||f||_{\fk}\not=0.$
 
Let $\tilde{F}$ be a holomorphic lifting of $f$ on the Stein open subset $U.$  We choose  a covering of $M$ by a finite number of bounded Stein open subsets $\{U_{\al}|\al =1,\cdots, N\}$ such that $U$ is a member of $\{U_{\al}\},$ and  the maximal diameter of $\{U_{\al}\}$ is less than $D.$ 
Let $f_{\al}\in H^0(U_{\al}, K_M\otimes E)$  be the extended holomorphic section of  the jet section $f-J^{\fk}_{\fp} (F)\in J^{k_1}_{p_1}(K_M\otimes E)$ on $U_{\al},$
 obtained  by using Lemma \ref{exlem} (note  Lemma \ref{exlem} is still true if the bundle $E$ is only strong Nakano nef by the proof of Theorem \ref{exth}). Then  we have
  \begin{equation} \int_{U_{\al}} \frac{|f_{\al}|^2_H}{\Big(|s_1|\cdots |s_l|)^{2n}\Big(\log \frac{1}{|s_j|}\Big)^{\delta_j}} \leq C_2\Vert f-J^{\fk-1}_{\fp} (F_{\fk-1})\Vert ^2_{\fk}.\label{ualpha}
    \end{equation}
We will glue theses holomorphic sections $\{f_{\al}\}$ together to get a global extension of $\tilde{F}- F_{\fk-1}$ on $M.$
 Let $\{\theta_{\al}\}$ be a smooth partition of unity corresponding to the covering $\{U_{\al}\},$   and   $$\tilde{f}=\sum_{\al}\theta_{\al}f_{\al}.$$
    is a smooth patching of $\{f_{\al}\}.$  By definition of jet section  we have $$\partial ^{\gamma}(\tilde{F}-F_{\fk-1})|_{p_1}=0,~~~{\rm for~~any}~~~~|\gamma|\leq k_1-1.$$
    For any index $\bt$  we have
    \begin{equation}
    D''\tilde{f}|_{U_{\bt}}=\sum_{\al}f_{\al}\bar{\partial}\theta_{\al}|_{U_{\bt}}=\sum_{\al}(f_{\al}-f_{\bt})\bar{\partial}\theta_{\al}|_{U_{\bt}};
    \label{compa1}
    \end{equation}
    here we use $\sum_{\al}\theta_{\al} =1$ and hence  $\sum_{\al}\bar{\partial}\theta_{\al}=0.$ Since both $f_{\al}$ and $f_{\bt}$ 
    are extensions of the jet section  $f-J^{\fk-1}_{\fp} (F_{\fk-1})$, we have 
     \begin{equation}
   \partial ^{\gamma}f_{\al}|_{p_1}=\partial^{\gamma}f_{\bt}|_{p_1}=0,~~~{\rm for~~any}~~~~|\gamma|\leq k_1-1.
    \label{compa2}
    \end{equation} and
      \begin{equation}
   \partial ^{\al}f_{\gamma}|_{p_1}=\partial^{\gamma}f_{\bt}|_{p_1}=\partial^{\gamma}\tilde{F}|_{p_1},~~~{\rm for~~any}~~~~|\gamma|=k_1.
    \label{compa3}
    \end{equation}

 Let $a_{\tau,\el},b_{\tau,\el},v_{\tau,\el},\phi,\eta_{\el}$ be the functions as defined in (\ref{a}),(\ref{bc}) and (\ref{pet}) respectively, here in  (\ref{pet})
  we take $u(x)=x+\frac{1}{\delta_1}x^{\delta_1}$ with $0<\delta_1 <1.$  
     Let $$\widetilde{H}=H_{\rho}e^{-2\sum_{j=1}^l (n+k_j) \log |s_j|}$$  be a smooth Hermitian metric on $K_M\otimes E.$  Note here $H_{\rho}$ are a family smooth Hermitian metric on the strong Nakano nef bundle $E,$ and $$\langle [\Theta_{H_{\rho}},\Lambda]\vartheta,\vartheta\rangle \geq -\rho |\vartheta|_{H_{\rho}}^2$$ 
 for any $E$-valued $(n,1)$-form $\vartheta$ by (\ref{nakanoid}).  Hence $$\langle [ \eta_{\el} \Theta_{\widetilde{H}_1} -id'd''\eta_{\el}-i\lmd d'\eta_{\el}\wedge d''\eta_{\el},\Lm]\vartheta,\vartheta\rangle  \geq a_{\tau,\el}(\phi) |(d''\phi)^* \vartheta|_{H_{\rho}}^2-\rho\eta_{\el} |\vartheta|_{H_{\rho}}^2. $$
For $\tau\ll 0$ we have the same estimate $\eta_{\el}\leq \tau^2$   as in the equality (\ref{taub}).
Take $\rho=\frac{1}{\tau ^4}$   then
$$\rho\eta_{\el} \leq \frac{1}{\tau^2}.$$
     Let
 $$g_{\tau,\el}=D''((1-b_{\tau,\el}(\phi))\tilde{f})=(-a_{\tau,\el}(\phi)d''\phi)\tilde{f}+(1-b_{\tau,\el}(\phi))D''\tilde{f}. $$
By (ii) of Proposition \ref{exist}, there exist $f_{\tau,\el}\in L^2(M,\Lambda^{n,0} T^*_M\otimes E)$ and $ h_{\tau,\el}\in L^2(M,\Lambda^{n,1} T^*_M\otimes E)$ such that
\begin{equation}D'' f_{\tau,\el}+\frac{1}{\tau}h_{\tau,\el}=g_{\tau,\el},\label{cpxeq}\end{equation}
and

\begin{eqnarray}\begin{array}{rcl}
&&\int_{M}(\eta_{\el} +\lambda_{\el}^{-1})^{-1}\frac{|f_{\tau,\el}|^2_{H_{\rho}}}{{\prod}_{j=1}^l |s_j|^{2(n+k_j)}}  +\int_{M}\frac{|h_{\tau,\el}|^2_{H_{\rho}}}{{\prod}_{j=1}^l |s_j|^{2(n+k_j)}}\\
 &\leq &2\Big(\int_{M} \frac{a_{\tau,\el}(\phi)|\tilde{f}|^2_{H_{\rho}}}{{\prod}_{j=1}^l |s_j|^{2(n+k_j)}}+
 \tau^2\int_M \frac{(1-b_{\tau,\el}(\phi))^2}{{{\prod}_{j=1}^l |s_j|^{2(n+k_j)}}}|D''\tilde{f}|^2_{H_{\rho}}\Big).\end{array}\label{mainineq}
 \end{eqnarray}
 
\begin{lemma} For $\tau\ll 0$ there is a bounded positive constant depending only on $D$ and $|\partial^{\gamma}\tilde{F}(p_1)|_H$ with $|\gamma|=k_1+1$ such that
$$\lim_{\el\rw 0}\int_M \frac{(1-b_{\tau,\el}(\phi))^2}{{{\prod}_{j=1}^l |s_j|^{2(n+k_j)}}}|D''\tilde{f}|^2_{H_{\rho}}\leq Ce^{2\tau}.$$
\end{lemma}

\begin{proof}Take a small contractible open neighborhood $U_1$ of $p_1$ and trivialized $E$ over $U_0$ then we may write
$$f_{\al}-f_{\bt}=(\hat{f}_{\al}-\hat{f}_{\bt})dz_1\wedge\cdots \wedge dz_n,$$
where $\hat{f}_{\al},\hat{f}_{\bt}$ are local vector valued holomorphic functions.
 Let $z=(z_1,\cdots,z_m) $ be the local holomorphic coordinate of $U_0$ such that $p_1$ lies at the origin, i.e., $w_1(p_1)=0.$ Then $s_1=\frac{1}{27D}z.$
 Since the support of $1-b_{\tau,\el}(\phi)$ is contained in the subset $\phi^{-1}((-\infty,\tau+\frac{1-\el}{2}))\subset M,$ for $\tau\ll 0$  the support of $1-b_{\tau,\el}(\phi)$ is contained  in $U_{\al}\cap U_{\bt}.$
  Note here $\phi=\log |s_1|.$ We have 
 $$\lim_{\el\rw 0}\frac{(1-b_{\tau,\el}(\phi))^2}{{{\prod}_{j=1}^l |s_j|^{2(n+k_j)}}}\leq 4\Big(\frac{27D}{d}\Big)^{2(nl +|\fk|-k_1-n)} \frac{1}{|s_1|^{2(k_1+n)}}.$$
 By (\ref{compa2}) and (\ref{compa3}),
on $U_1$ we have Taylor expansion
$$\hat{f}_{\al}-\hat{f}_{\bt}=\sum_{|\gamma |\geq k_1 +1}\frac{\partial^{\gamma}\hat{f}_{\al}-\partial^{\gamma}\hat{f}_{\bt}}{\gamma!}z^{\gamma}, $$
where $\gamma=(\gamma_1,\cdots,\gamma_n)$ and $|\gamma|=\gamma_1+\cdots+\gamma_n.$  Let $d{\sigma}=\wedge^n_{j=1} (idz_j \wedge d\bar{z}_j)=2^n r^{2n-1}drdS$
be the volume element and $dS$ is area element of the unit sphere in $\R^{2n}.$
Note for $0<\el<1,$
$$\begin{array}{rcl}
&&\sum_{|\gamma|=k_1+1}\int_M \frac{(1-b_{\tau,\el}(\phi))^2|z|^{2\gamma}}{|s_1|^{2(k_1+n)}}d{\sigma}\\
&=&\sum_{|\gamma|=k_1+1}\int_{|s_1|\leq e^{\tau+\frac{1-\el}{2}}} \frac{(1-b_{\tau,\el}(\phi))^2|z|^{2\gamma}}{|s_1|^{2(k_1+n)}}d{\sigma}\\
&\leq & 4 (27D)^{|\gamma|}\underset{|\gamma|=k_1 +1}{\sum}\int_{|z|\leq e^{\tau+\frac{1-\el}{2}}} \frac{|z|^{2\gamma}}{|z|^{2(k_1+n)}}d{\sigma}\\
&= & 2^{(n+2)} (27D)^{|\gamma|}(\int^{e^{\tau+\frac{1-\el}{2}}}_0 rdr )\underset{|\gamma|=k_1 +1}{\sum}\oint_{|z|=1}\frac{|z|^{2\gamma}}{|z|^{2(k_1+n)}}dS\\
&=& C_De^{2\tau},
\end{array}$$
where $C_D$ is a constant depending only on $D.$ Hence

$$\begin{array}{rcl}
&&\underset{\el\rw 0}{\lim}\int_M \frac{(1-b_{\tau,\el}(\phi))^2}{{{\prod}_{j=1}^l |s_j|^{2(n+k_j)}}}|D''\tilde{f}|^2_{H_{\rho}}\\
&\leq&\frac{4}{(\gamma!)^2}\Big(\frac{27D}{d}\Big)^{2(nl+|\fk|-k_1-n)}\underset{|\gamma|=k_1 +1}{\sum}\int_M \frac{|z|^{2\gamma}(|\partial^{\gamma}\hat{f}_{\al}|^2_{H_{\rho}}+|\partial^{\gamma}\hat{f}_{\bt}|^2_{H_{\rho}})|\bar{\partial}\theta |^2}{|z|^{2(k_1+n)}}
d\sigma\\
&\leq &\underset{\substack{1\leq \al\leq N\\ p\in M}}{\max}(|\bar {\partial}\theta_{\al}|^2(p))\underset{\substack{|\gamma|=k_1 +1\\ p\in M}}{\max}(|\partial^{\gamma}\hat{f}_{\al}|^2_{H_{\rho}}(p)+|\partial^{\gamma}\hat{f}_{\bt}|^2_{H_{\rho}}(p))C_De^{2\tau}+O(e^{2\tau})\\
&\leq & Ce^{2\tau}.
\end{array}$$
Since $\partial^{\gamma}\hat{f}_{\al}(z)\rw \partial^{\gamma}\hat{F}(p_1) $ and $\partial^{\gamma}\hat{f}_{\bt}(z)\rw \partial^{\gamma}\hat{F}(p_1) $ as $\tau\rw -\infty,$  hence for $\tau\ll 0$
$C$ is a bounded positive constant depending only on $D$ and $|\partial^{\gamma}\tilde{F}(p_1)|_H$ with $|\gamma|=k_1+1.$
\end{proof}

  Now we continue  to estimate the R.H.S. of  (\ref{mainineq}).  We may  decompose the first term of the R.H.S. of  (\ref{mainineq}) as sum of integral of each bounded Stein open set $U_{\al},$
  
  $$\int_{M} \frac{a_{\tau}(\phi)|\tilde{f}|^2_{H_{\rho}}}{{\prod}_{j=1}^l |s_j|^{2(n+k_j)}}\leq \sum_{\al} \int_{U_{\al}} \frac{a_{\tau}(\phi)|\tilde{f}|^2_{H_{\rho}}}{{\prod}_{j=1}^l |s_j|^{2(n+k_j)}}.$$
 Recall that each $f_{\al}$  is a extension of $f-J^{\fk-1}_{\fp}F_{\fk -1}$ on $U_{\al}.$ If $U_{\al}\cap U=\emptyset$ then  $\int_{U_{\al}} \frac{a_{\tau}(\phi)|\tilde{f}|^2_{H_{\rho}}}{{\prod}_{j=1}^l |s_j|^{2(n+k_j)}}=0$  when $\tau\ll 0;$   if $U_{\al}\cap U\not=\emptyset$ then  by  Lemma \ref{claim} (in fact (\ref{latecite}) in the proof of  Lemma \ref{claim} is enough),
 $$\lim_{\tau\rw -\infty}\int_{U_{\al}} \frac{a_{\tau}(\phi)|\tilde{f}|^2_{H_{\rho}}}{{\prod}_{j=1}^l |s_j|^{2(n+k_j)}}\leq C_2\Vert f-J^{\fk-1}_{\fp} (F_{\fk-1})\Vert_{\fk}\leq 2C_2\Vert f\Vert_{\fk},$$ 
 here the length $\Vert f-J^{\fk-1}_{\fp} (F_{\fk-1})\Vert_{\fk}$ and $\Vert f\Vert_{\fk}$ are firstly with respect to $H_{\rho},$ since (each entry of) $H_{\rho}$ is continuous and uniformly convergent 
 to $H,$ so finally  the length $\Vert f-J^{\fk-1}_{\fp} (F_{\fk-1})\Vert_{\fk}$ and $\Vert f\Vert_{\fk}$ in the above inequality are with respect to $H$ after taking the limit of the metric when $\tau\rw -\infty.$  
 As a result   
  $$\lim_{\tau\rw -\infty}\int_{M} \frac{a_{\tau}(\phi)|\tilde{f}|^2_{H_{\rho}}}{{\prod}_{j=1}^l |s_j|^{2(n+k_j)}}\leq C_3\Vert f\Vert_{\fk}$$ 
for a sufficiently large and  positive number $C_3<+\infty.$  Hence the  limit of the R.H.S. of  (\ref{mainineq}) 

\begin{equation}\lim_{\tau\rw -\infty}\Big(\int_{M} \frac{a_{\tau}(\phi)|\tilde{f}|^2_{H_{\rho}}}{{\prod}_{j=1}^l |s_j|^{2(n+k_j)}}+
 \tau^2\int_M \frac{(1-b_{\tau}(\phi))^2}{{{\prod}_{j=1}^l |s_j|^{2(n+k_j)}}}|D''\tilde{f}|^2_{H_{\rho}} \Big)\leq C_3\Vert f\Vert_{\fk}, \label{rhs}\end{equation}
since $\tau^2 e^{2\tau}\rw 0$ if $\tau\rw -\infty.$  So the integrals of the L.H.S. of (\ref{mainineq}) are uniformly bounded.

By equation (\ref{cpxeq}), $D''h_{\tau,\el}=\tau\cdot D''( g_{\tau,\el}-D''f_{\tau,\el}) =0,$ and $h_{\tau,\el}$ is square integrable with respect to the metric $H_{\rho}e^{-2\sum_{j=1}^l(n+k_j) \log |s_j|}$
from the estimate in (\ref{mainineq}) and (\ref{rhs}).  Hence by the existence theorem of $\bar{\partial}$-equation on  Stein manifold, there exists $E$-valued  function $l^{\al}_{\tau,\el}$ defined on $U_{\al}$
such that
 $$D''l^{\al}_{\tau,\el}=h_{\tau,\el}$$ 
 with the estimate
  $$\int_{U_{\al}}\frac{|l^{\al}_{\tau,\el}|^2_{H_{\rho}}}{{\prod}_{j=1}^l |s_j|^{2(n+k_j)}}  \leq C''\int_{U_{\al}}\frac{|h_{\tau,\el}|^2_{H_{\rho}}}{{\prod}_{j=1}^l |s_j|^{2(n+k_j)}}$$
  by H\"ormander $L^2$-estimate.  
   Let
 $$I_{\tau,\el}=(1-b_{\tau,\el}(\phi))\tilde{f}-f_{\tau,\el}+\frac{1}{\tau}l^{\al}_{\tau,\el}.$$
Then $D''I_{\tau,\el}=0$ and  $I_{\tau,\el}$ is holomorphic section. It in particular means that $f_{\tau,\el}-\frac{1}{\tau}l^{\al}_{\tau,\el}\in C^{\infty}(U_{\al})$ is a smooth section. Moreover

\begin{equation}\liminf_{\tau\rw -\infty} \liminf_{\el\rw 0}\int_{U_{\al}}\frac{(\eta_{\el} +\lambda_{\el}^{-1})^{-1}}{{\prod}_{j=1}^l |s_j|^{2(n+k_j)}}|f_{\tau,\el}|^2_{H_{\rho}} 
 \leq  C_3\Vert f\Vert_{\fk}.
\end{equation}
So  as $\el\rw 0,$ $I_{\tau,\el},f_{\tau,\el},h_{\tau,\el}$ and  $h^{\al}_{\tau,\el}$ have subsequences
which have weak limits in weighted $L^2$-space on each $U_{\al}$, denoted by  $I_{\tau}, f_{\tau}, h_{\tau}$ and $l^{\al}_{\tau}$ respectively.  Note $I_{\tau}$ is a holomorphic section and 
 $I_{\tau}=(1-b_{\tau}(\phi))\tilde{f}-f_{\tau}+\frac{1}{\tau}l^{\al}_{\tau}.$ 
 Taking weak limit as $\el\rw 0 $ after further taking subsequence, and using the formula (\ref{btau})  we know
\begin{equation} \int_{U_{\al}} \frac{|{f}_{\tau}|^2_{H_{\rho}}} {{{\prod}_{j=1}^l |s_j|^{2(n+k_j)}}(\log\frac{1}{|s_1|})^{\delta_1}}   
 <+\infty.
\label{repat2}\end{equation}
In the same way we  know  $\int_{U_{\al}}\frac{|l^{\al}_{\tau}|^2_{H_{\rho}}}{{\prod}_{j=1}^l |s_j|^{2(n+k_j)}}   <+\infty.$
Since $\log\frac{1}{|s_1|}>1$ we know that  on every $U_{\al}$
$$ \int_{U_{\al}}\frac{|\frac{1}{\tau}l^{\al}_{\tau}-f_{\tau}|^2_{H_{\rho}}}{{\prod}_{j=1}^l |s_j|^{2(n+k_j)}}   < +\infty.$$
Hence the integral
$$ \int_{U_{\al}}\frac{|\frac{1}{\tau}l^{\al}_{\tau,\el}-f_{\tau,\el}|^2_{H_{\rho}}}{{\prod}_{j=1}^l |s_j|^{2(n+k_j)}}  $$ is uniformly bounded for each $U_{\al},$
since $f_{\tau,\el}-\frac{1}{\tau}l^{\al}_{\tau,\el}\in C^{\infty}(U_{\al})$ is a smooth  we must have 

\begin{equation}\partial^{\gamma}(f_{\tau,\el}-\frac{1}{\tau}l^{\al}_{\tau,\el})(p_j)=0,~~~{\rm for}~~~ |\gamma|\leq k_j, ~~~~{\rm with}~~j=1,\cdots,l.\label{dbzero}  \end{equation}

Note $\rho=\frac{1}{\tau^4}.$ By definition of strong Nakano nef vector bundle $\{H_{\rho}\}$ have a subsequence convergent to a limit Hermitian metric $H$ with
Nakano pseudo effective curvature current.  Hence  take limit as $\tau\rw -\infty$
of the inequality (\ref{mainineq}) and use (\ref{rhs}) we obtain

\begin{equation} \liminf_{\tau\rw -\infty}\liminf_{\el\rw 0}\int_{M}\frac{(\eta_{\el} +\lambda_{\el}^{-1})^{-1}}{{\prod}_{j=1}^l |s_j|^{2(n+k_j)}}|f_{\tau,\el}|^2_{H_{\rho}} 
 \leq  C_3\Vert f\Vert_{\fk}.
\end{equation}
Since $f_{\tau}$ and $l^{\al}_{\tau}$ are weighted $L^2$-bounded, hence $I_{\tau}$ are weighted $L^2$-bounded.
After further taking subsequences we may assume the holomorphic sections $I_{\tau}=(1-b_{\tau}(\phi))\tilde{f}-f_{\tau}+\frac{1}{\tau}l^{\al}_{\tau}$  is weakly convergent to a holomorphic section, and the limit  we denote by $F_k.$
Note any subseqence of $(1-b_{\tau}(\phi))\tilde{f}$ and $\frac{1}{\tau}h^{\al}_{\tau}$ are always weakly  convergent to zeros on each open set $U_{\al}.$ Hence by (\ref{btau}) and noting 
$|s_j|<1$ for $j=1,\cdots,l$ we get
\begin{equation}
 \int_{M}\frac{|F_k|^2_{H}}{{(|s_1|\cdots |s_l|)^{2n}}\Big(\log\frac{1}{|s_1|}\Big)^{\delta_1}}
\leq C_4\Vert f\Vert_{\fk},\label{ind24}
\end{equation}
  where $C_4$ is a bounded positive constant.

  Now put $F=F_k +F_{\fk-1}.$   Note that $I_{\tau,\el}=(1-b_{\tau,\el}(\phi))\tilde{f}-f_{\tau,\el}+\frac{1}{\tau}l^{\al}_{\tau,\el}\rw F_k$ when $\el\rw 0$ and $\tau\rw -\infty.$ 
  Note by definition,  $(1-b_{\tau,\el}(\phi))$ is a globally  defined smooth section on $M.$
  since $f_{\tau,\el}$ is obtained via solving $\bar{\partial}$-equations on $M$ it is globally defined on the whole $M$ too.  
   Though  $l^{\al}_{\tau,\el}$ is obtained by solving $\bar{\partial}$-equations on $U_{\al},$ they may not be able to glue up to a global section, but
    $\frac{1}{\tau}l^{\al}_{\tau,\el}$ is weakly convergent to zero as $\el\rw 0$
  and $\tau\rw -\infty.$ Hence the weak limit $F_k$ of $I_{\tau,\el}$ is well defined a global holomorphic section on $M.$    
  
  Without loss of generality,   in the following we assume $I_{\tau,\el},f_{\tau,\el},h_{\tau,\el}$ and  $h^{\al}_{\tau,\el}$ themselves (not passing to subsequences) are  weakly convergent
  as $\el\rw 0$ and $\tau\rw -\infty.$

Now we check $F$ is the expected holomorphic extension.  By definition of $b_{\tau,\el},$  when $p$ is inside a sufficiently small neighborhood  of $p_1$  we have $b_{\tau,\el}(\phi(p))=0$ and hence in a small neighborhood of $p_1$ we have
 $I_{\tau,\el}(p)=\tilde{f}-f_{\tau,\el}+\frac{1}{\tau}l^{\al}_{\tau,\el}.$ By (\ref{dbzero})
 we have
$$\partial ^{\gamma} F(p_1)=\partial ^{\gamma} \tilde{f}(p_1)$$ for  any $\gamma$ with $|\gamma|\leq k_1.$
For any $p\not=p_1$ we have $$\lim_{\tau\rw -\infty}\lim_{\el\rw 0}(1-b_{\tau,\el}(\phi(p)))=0,$$ 
In fact if $\tau\ll 0$ then we have $b_{\tau,\el}(\phi(p))=1$ for any fixed $p\not=p_1.$  In particular for any $j\geq 2$ by (\ref{dbzero})  we have
$$\partial ^{\gamma} F(p_j)=-\lim_{\tau\rw \infty}\lim_{\el\rw 0}\partial ^{\gamma} (\frac{1}{\tau}l^{\al}_{\tau,\el}-f_{\tau,\el})(p_j)+\partial ^{\al} F_{\fk-1}(p_j)=\partial ^{\al} \tilde{F}(p_j)$$ 
for any $\gamma$ with $|\gamma|\leq k_j.$  
Hence $F$ is the expected extension of jet section $f\in J^{\fk}_{\fp}(K_M\otimes E).$
Note that $|F|^2_H\leq 2(|F_{k}|^2_H +|F_{\fk-1}|^2_H),$
By the induction assumption   (\ref{ind23}) and
  (\ref{ind24}), 
$$\int_M \frac{|F|^2_H}{ (|s_1|\cdots |s_l|)^{2n} \Big(\log \frac{1}{|s_|}\Big)^{\delta_1}}\leq  C_4\Vert f\Vert ^2_{\fk}+C_1\Vert f\Vert ^2_{\fk-1}=C\Vert f\Vert ^2_{\fk},$$
where  $C$ is a finite positive  constant. This establishes {\rm ($D_{\fk ,1}$)}   and hence {\rm ($C_{\fk }$)} by using arithmetic-geometric inequality as what we have done in the proof of Theorem \ref{exthm}.
  
  \end{proof}
  
  The following Corollaries are proved in the same way with Corollary \ref{excor1} and \ref{excor2}, as they are concluded from Theorem \ref{exthm}.
\begin{corollary}\label{excor3}
Let $M$ be a  compact K\"ahler manifold and  $E$  a  strong Nakano nef holomorphic vector bundle   on $M,$  with a Nakano pseudo effective limit Hermitian metric $H.$ 
Let $\{p_1,\cdots,p_l\}$ be a finite set locate in a  stein open ball $U$ of $M.$ Let $z$ be holomorphic coordinate of $U$ with $z(p_j)=w_j$  and
    $s_j (z)=z-w_j$ for $j=1,\cdots,l.$  Then for
 any holomorphic jet section $f\in J^{\fk}_{\fp} (K_M \otimes E)$ satisfying $$\Vert f\Vert ^2_{\fk}<\infty,$$ 
 there exists holomorphic section $F\in H^0(M,K_M \otimes E) $  such that $J^{\fk}_{\fp} (F) =f$ and there is a  finite positive  constant $C$ with
$$\int_M \frac{|F|^2_H}{{\prod}_{j=1}^l (1+|s_j|^2)^{n}} \leq C\Vert f\Vert ^2_{\fk},$$
 where $\delta_j >0 $ are  any positive constants.
\end{corollary}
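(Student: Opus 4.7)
The plan is to derive Corollary~\ref{excor3} from Theorem~\ref{exh} by an elementary weight-comparison argument, exactly in parallel with the way Corollary~\ref{excor1} was extracted from Theorem~\ref{exthm}. Theorem~\ref{exh} provides an $L^2$ extension estimate against the singular weight $\prod_{j=1}^l |s_j|^{2n}(\log(1/|s_j|))^{\delta_j}$ with $s_j=(z-w_j)/(27D)$ and any $\delta_j>0$; what we need here is an estimate against the smooth weight $\prod_{j=1}^l(1+|z-w_j|^2)^n$. Thus the corollary will follow at once if we can pointwise dominate the latter from above by a constant multiple of the former, up to the rescaling factor $27D$.

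First I would rescale: write $\tilde{s}_j=(z-w_j)/(27D)$, so that $|\tilde{s}_j|<1$ and $\log(1/|\tilde{s}_j|)>1$ throughout $U$. Then $(1+|z-w_j|^2)^n$ is a positive constant multiple of $((27D)^{-2}+|\tilde{s}_j|^2)^n$, and the problem reduces to the pointwise bound
\[
\bigl((27D)^{-2}+|\tilde{s}_j|^2\bigr)^n \leq |\tilde{s}_j|^{2n}\bigl(\log(1/|\tilde{s}_j|)\bigr)^{\delta_j}
\]
for an appropriate choice of $\delta_j>0$.

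Next I would perform the weight comparison itself by introducing the auxiliary function
\[
h(x)=(1+x^2)^n-x^{2n}\bigl(\log(27D/x)\bigr)^{\delta_j}, \qquad 0<x<D,
\]
and verifying $h(x)\geq 0$ in two regimes. Near the origin, $\lim_{x\to 0^+}h(x)=1$, so there exists $\kappa>0$ with $h(x)>1/2$ on $(0,\kappa]$ for any $\delta_j\geq 0$. On the complementary interval $[\kappa,D]$, the estimate $\log(27D/x)\leq \log(27D/\kappa)$ reduces the inequality to a purely polynomial condition, and the explicit choice $\delta_j=n\log(1+D^{-2})/\log(27D/\kappa-1)$ (exactly the one used for Corollary~\ref{excor1}) makes $h(x)\geq 0$ on all of $(0,D)$. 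These are precisely the two regimes appearing in the proof of Corollary~\ref{excor1}.

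Finally, applying Theorem~\ref{exh} with this choice of $\delta_j$ produces a holomorphic extension $F\in H^0(M,K_M\otimes E)$ with the required norm bound against $\prod |s_j|^{2n}(\log(1/|s_j|))^{\delta_j}$; by the weight comparison just established, the same $F$ satisfies the estimate of Corollary~\ref{excor3} after absorbing the rescaling constants $(27D)^{2n}$ into the final constant $C$. There is no substantive analytic obstacle here — the author has in fact flagged this by writing that the corollaries are proved "in the same way" — and the only work involved is the routine check that $h\geq 0$ for the explicit $\delta_j$ above.
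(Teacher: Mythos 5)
Your proposal is correct and reproduces exactly the paper's intended argument: the paper explicitly states that Corollary~\ref{excor3} (and \ref{excor4}) are ``proved in the same way with Corollary~\ref{excor1} and~\ref{excor2},'' and your rescaling to $\tilde{s}_j=(z-w_j)/(27D)$, introduction of the auxiliary function $h(x)=(1+x^2)^n-x^{2n}(\log(27D/x))^{\delta_j}$, the two-regime verification that $h\geq 0$, and the explicit choice of $\delta_j$ all match the proof of Corollary~\ref{excor1} verbatim. The only difference is that here you invoke Theorem~\ref{exh} rather than Theorem~\ref{exthm}, which is precisely the substitution the paper makes.
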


\begin{corollary}\label{excor4}
Let $M$ be a  compact K\"ahler manifold and  $E$  a  strong Nakano nef holomorphic vector bundle  on $M,$  with a Nakano pseudo effective limit Hermitian metric $H.$ 
Then for any holomorphic jet section $f\in J^{\fk}_{\fp} (K_M \otimes E)$ whose support locate is a finite subset contained a Stein open ball  $U$ in $M,$ 
satisfying $$\Vert f\Vert ^2_{\fk}<\infty,$$ there exists holomorphic section $F\in H^0(M,K_M \otimes E) $  such that $J^{\fk}_{\fp} (F) =f$ and there is a  finite positive  constant $C$ with
$$\int_M |F|^2_H \leq C\Vert f\Vert ^2_{\fk},$$
 where $\delta_j >0 $ are  any positive constants.
 \end{corollary}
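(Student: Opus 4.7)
The plan is to derive Corollary~\ref{excor4} from Corollary~\ref{excor3} in exactly the routine manner by which Corollary~\ref{excor2} was derived from Corollary~\ref{excor1} in the preceding subsection; all the heavy analytic machinery has already been absorbed into the proof of Corollary~\ref{excor3}. First I would apply Corollary~\ref{excor3} to the given jet section $f \in J^{\fk}_{\fp}(K_M\otimes E)$ to produce a holomorphic extension $F \in H^0(M, K_M\otimes E)$ with $J^{\fk}_{\fp}(F)=f$ and the weighted estimate
$$\int_M \frac{|F|^2_H}{{\prod}_{j=1}^l (1+|s_j|^2)^n} \leq C' \Vert f\Vert^2_{\fk},$$
where $C'$ is a finite positive constant depending only on the diameter of the Stein ball $U$ and the minimal distance among $p_1,\ldots,p_l$.

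The second step is to observe that the weight appearing in the denominator is uniformly bounded above. Indeed, the coordinate functions $s_j(z)=z-w_j$ are defined on the Stein open ball $U\subset M$ of finite diameter $D$, so on $U$ one has $\prod_{j=1}^l(1+|s_j|^2)^n \leq (1+D^2)^{nl}$. I would extend the $s_j$ globally to $M$ by multiplying with a smooth cutoff supported in a slightly enlarged neighborhood of $\overline{U}$, after which the weight $W:=\prod_{j=1}^l(1+|s_j|^2)^n$ becomes a continuous positive function on the compact manifold $M$, hence bounded above by some constant $A>0$ depending only on $D$. Consequently,
$$\int_M |F|^2_H \leq A\int_M \frac{|F|^2_H}{W} \leq AC' \Vert f\Vert^2_{\fk},$$
which is precisely the stated estimate with $C = AC'$.

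The main obstacle at this final stage is essentially nil: the genuine analytic difficulty — the twisted Bochner-Kodaira-Nakano identity of Proposition~\ref{kno}, the inductive solution of approximate $\bar{\partial}$-equations for strong Nakano nef vector bundles, and the partition-of-unity gluing across a finite Stein cover of $M$ — has already been carried out in Theorem~\ref{exh} and passed to Corollary~\ref{excor3}. The only point requiring even a moment of care is the globalization of the locally defined functions $s_j$ to all of $M$; this is handled trivially by compactness, since any continuous positive function on a compact manifold is bounded above, and the precise form of the extension outside $\overline{U}$ affects only the size of the constant $A$, not the validity of the estimate.
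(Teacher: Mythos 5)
Your proof is correct and takes essentially the same route as the paper: the paper explicitly states that Corollaries~\ref{excor3} and \ref{excor4} are ``proved in the same way'' as Corollaries~\ref{excor1} and \ref{excor2}, and the passage from \ref{excor3} to \ref{excor4} is exactly what you do, namely observing that the polynomial weight $\prod_j(1+|s_j|^2)^n$ is bounded above so that dropping it only worsens the constant. Your explicit remark about globalizing the coordinate functions $s_j$ beyond the Stein ball $U$ via a cutoff is a reasonable way to fill in a point the paper leaves implicit, and it does not change the estimate beyond the constant, as you say.
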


\section{Proof of the main theorem}\label{subs5}

To prove the main theorem  we will show $M$ has many global holomorphic vector fields if its tangential bundle is strong Griffiths positive.   Geometrically, if the follows of the holomorphic 
vector fields could connect with one another such that a point start one place and go along the flows  it could arrive everywhere of $M$ then $M$ is homogeneous.
 Unfortunately the point may stop at some places, which 
are exactly the zero points of the vector fields.
If all extended holomorphic vector fields have no zero points, on the Lie group level which means the follows defined by the  holomorphic vector fields has no fixed points, then the holomorphic automorphic group $\Aut(M)$ will act almost freely on $M.$ If moreover the extended holomorphic vector fields could span the  tangent space of $M$  everywhere, 
  which means the action of $\Aut(M)$ has open and dense orbit,
  then $M$ is holomorphic homogeneous. Unfortunately, we could not guarantee the extended holomorphic vector fields have no zero points  outside the points where they already have the given nonzero values. To overcome this difficulty, we consider extending holomorphic jet sections.  If we take enough many holomorphic jet vector fields defined on a Stein neighborhood, and arrange their jet values such that they linearly spanned  the tangent space in a neighborhood,  and if these jet vector fields are all extendable on the whole manifold $M,$ then it implies that $\Aut(M)$ acts transitively in the neighborhood. If we could repeat this process  everywhere on $M,$ then we may prove $M$ is homogenous.

Though as a Fano manifold, $M$ is projective and we may cut a hyper surface $L$ out of $M$ such that the rest $M\backslash L$ is a Stein manifold.  Unfortunately  it is very difficult to 
construct a complete K\"ahler metric on   $M\backslash L$ such that the diameter of $M\backslash L$ is bounded.  Since in  Theorem \ref{exthm},  Corollary \ref{excor1} and \ref{excor2},  the upper bounds of the $L^2$-estimate of the extended sections
depend on the diameter of the Stein manifold and is not necessary uniformly bounded, even we could extend a holomorphic section to defined on the whole of $M\backslash L,$ we could not extend it further
to $M.$   So we need using the extension theorem established on the K\"ahler manifolds.

        \begin{theorem} \label{autm} If $M$ is a compact K\"ahler manifold with strong Griffiths  nef tangent bundle, then $M$ is a homogeneous complex manifold under action of its holomorphic automorphism group.
\end{theorem}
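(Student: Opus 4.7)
The plan is to exhibit enough global holomorphic vector fields on $M$ so that the evaluation map $\mathrm{ev}_p: H^0(M, T_M) \to T_p M$ is surjective at every point $p \in M$, and then conclude homogeneity by a standard open-orbit argument. The bridge to the extension machinery of Section \ref{subs4} rests on the identity
\[
K_M \otimes (T_M \otimes \det T_M) \;=\; K_M \otimes T_M \otimes K_M^{-1} \;=\; T_M,
\]
which turns a global extension problem for sections of $T_M$ into one of the kind treated by Theorem \ref{exh} applied to the vector bundle $E = T_M \otimes \det T_M$.

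First, since $T_M$ is strong Griffiths nef by hypothesis, Proposition \ref{naeff} tells us that $E := T_M \otimes \det T_M$ is strong Nakano nef and that its limit Hermitian metric $H$ is Nakano pseudoeffective, so the hypotheses of Theorem \ref{exh} (and, more conveniently, Corollary \ref{excor4}) are satisfied for this particular $E$. Now fix a point $p \in M$, choose a Stein coordinate ball $U \subset M$ containing $p$, and pick an arbitrary tangent vector $v \in T_p M$. Viewing $v$ as a $0$-jet section $f \in J^{0}_{\{p\}}(K_M \otimes E) = J^{0}_{\{p\}}(T_M)$ with $f(p) = v$, the norm $\|f\|_0^2$ computed via the continuous limit metric $H$ is finite. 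By Corollary \ref{excor4} there exists a global holomorphic section $F \in H^0(M, T_M)$ with $F(p) = v$. Therefore $\mathrm{ev}_p$ is surjective for every $p$.

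Next, invoke Bochner--Montgomery to endow $\Aut(M)$ with the structure of a complex Lie group whose Lie algebra is $H^0(M, T_M)$, as recalled in the introduction. The action map $\Aut(M) \times M \to M$ has differential at $(\mathrm{id}, p)$ given (in the $\Aut(M)$-direction) by $\mathrm{ev}_p$, which is surjective by the previous step. Consequently the orbit $\Aut(M)\cdot p$ is open in $M$ for every $p$. Since $M$ is connected and since distinct orbits are disjoint (so the complement of any single orbit is a union of other open orbits, hence open), the open orbit $\Aut(M) \cdot p$ is also closed, and thus equals $M$. This proves homogeneity.

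As for what I expect to be delicate: the only non-routine input is precisely the use of the extension theorem with the limit (possibly singular) metric $H$ rather than with the approximating smooth metrics $H_\rho$. One has to verify that the finiteness of $\|f\|_0^2$ is insensitive to the choice of metric in the approximation, which is built into Proposition \ref{intnorm} since the singular metric $H$ is still continuous and non-degenerate as a limit of the equicontinuous family $H_\rho$. Everything else---the reduction $T_M = K_M \otimes (T_M \otimes \det T_M)$, the identification of $\mathrm{Lie}(\Aut(M))$ with $H^0(M, T_M)$, and the open-orbit implies homogeneity step---is essentially bookkeeping once Theorem \ref{exh} is in hand.
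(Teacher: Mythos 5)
Your proof is correct, and it takes a genuinely leaner route than the paper to reach homogeneity from the extension theorem. Both arguments rest on the same reduction $T_M = K_M \otimes E$ with $E = T_M \otimes \det T_M$ and on Corollary~\ref{excor4} applied to the Nakano pseudoeffective limit metric guaranteed by Proposition~\ref{naeff}, so the analytic input is shared. The difference is downstream. The paper extends $1$-jets (in fact it sets things up with $2$-jets supported on three points, which is never really exploited) to produce vector fields whose values \emph{and} derivatives are controlled, then covers a path $C_{pq}$ by neighborhoods on which the extended fields span $T_M$, invokes Frobenius to say the corresponding flows sweep out each such neighborhood, and finally uses Heine--Borel twice to chain these flows into an automorphism sending $q$ to $p$. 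You instead extend only $0$-jets, observe that this makes $\mathrm{ev}_p : H^0(M,T_M) = \mathrm{Lie}\,\Aut(M) \to T_p M$ surjective for every $p$, and close with the standard open-orbit dichotomy: every orbit is open, hence every orbit is closed (its complement is a union of open orbits), hence there is one orbit by connectedness. Your route avoids the flow-and-Frobenius bookkeeping entirely and makes clear that only the $\fk = 0$ case of the extension theorem is needed for this application; the paper's derivative conditions on the jets do no visible work. Your side remark about Proposition~\ref{intnorm} and the continuity of the limit metric is exactly the point the paper also makes to ensure $\|f\|_{\fk} < \infty$, so that part is aligned as well.
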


\begin{proof}

Let $\Aut(M)$ denote the holomorphic automorphism group of $M.$ The natural holomorphic action $\Aut(M)\times M\rw M$ is denoted by mapping $(g,p)\longmapsto g\cdot p.$
Firstly we will prove the $\Aut(M)$ action is locally homogeneous. That means, for any point $p\in M$ and for an open ball $B_p (r)$  of  sufficient small radius $r$ with center $p,$   for any $q\in B_p (r)$ there exist $g\in \Aut(M)$ such that $g\cdot q=p.$

 Since the quotient map $$\ko_M\rw\frac{\ko _M} {\kj^2_{p_1}\kj^2_{p_2}\kj^2_{p_3}}\cong\oplus_{j=1}^3 (\C\oplus T^*_M|_{p_j})$$ is defined by 
 $f\longmapsto (f(p_1)+df(p_1),f(p_2)+df(p_2),f(p_3)+df(p_3))$ is surjective, hence  we have a surjective quotient map
 $$T_M\rw \frac{T_M}{\kj^2_{p_1}\kj^2_{p_2}\kj^2_{p_3}}\cong \oplus_{j=1}^3(T_M|_{p_j}\oplus \End (T_M|_{p_j})).$$ Take a jet $X\in H^0 (M, T_M\otimes\ko _M /{\kj^2_{p_1}\kj^2_{p_2}\kj^2_{p_3}})$ 
with $X(p_j)\not=X(p_k)$ and  $dX(p_j)\not=dX(p_k)$ for $1\leq j\not=k\leq 3.$ Hence without loss generality we assume $X(p_1)\in TM$ is not a zero vector and at the same time $dX(p_1)\not=0.$ Now we claim that we could $X$ to a global holomorphic vector field defined everywhere on $M.$
Since the tangent bundle $T_M$ of $M$ is strong Griffiths nef, by Proposition \ref{naeff}, $$E:=K^*_M\otimes T_M=\det(T_M)\otimes T_M$$
 is strong Nakano nef. Hence there is a limit Hermitian metric $H$ on $E$ whose curvature current is Nakano pseudo effective. Moreover every entry of $H$ is continuous on $M$ by Proposition \ref{naeff}, hence $||X||_{\fk} <+\infty$ with $\fk=(2,2,2).$
Note  $T_M=K_M\otimes E.$  By Corollary \ref{excor4},  $X$ could be extended to a global holomorphic vector field defined everywhere on $M$ and still denoted by $X\in H^0(M,T_M).$

Take a smooth curve $C_{pq}\subset B_p(r)$ connected $p$ and $q.$ For any $p'\in C_{pq},$  we could
 take jet vectors $X_1(p'),\cdots,X_m(p')$ such that they span the tangent space $T_M|_{p'},$ at the same time  
 we may assume  $dX_1(p'),\cdots, dX_n(p') $ are all nonzero.  Now extend all of them to global holomorphic vector fields $X_1,\cdots,X_m$ on $M,$ just in the same way we extend $X$ in the above
 paragraph. 
 Then $X_1,\cdots,X_m$ will span the tangent space $T_M$ in a neighborhood $U_{p'}$ of $p'.$  So the flows of $X_1,\cdots,X_n$ sweep out the open  neighborhood $U_{p'}$ by the Frobenius theorem. 
 Now for each $p'\in C_{pq}$ we repeat the process, we get an open cover $\{U_{p'}|p'\in C_{p,q}\}$ of $C_{pq}.$ By Heine-Borel's theorem, we get a finite subcover. Hence there are finite number of global holomorphic vector fields whose flow  may sweep out an open   neighborhood  of $C_{pq}.$ It in particular means that there exist $g\in\Aut(M)$  such that $g\cdot q=p.$

 Since for each $p\in M$ there is a open neighborhood $B_p(r)$ which is homogeneous  under the action of $\Aut (M)$ and $M$ is compact, using  Heine-Borel's theorem once again, we know $M$ is homogeneous  under the holomorphic action of $\Aut (M).$  Hence $M$ is a homogeneous complex manifold.\end{proof}
 
 \begin{corollary} If $M$ is a compact K\"ahler manifold with Griffith semipoistive tangent bundle, then $M$ is a homogeneous complex manifold under action of its holomorphic automorphism group.
 \end{corollary}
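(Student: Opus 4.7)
The plan is to reduce the corollary directly to Theorem \ref{autm} by checking that Griffiths semipositivity is a (much) stronger condition than strong Griffiths nefness. Concretely, I would verify that a holomorphic vector bundle carrying a single smooth Griffiths semipositive Hermitian metric automatically satisfies the definition of strong Griffiths nef.

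First, suppose $T_M$ admits a smooth Hermitian metric $H$ whose Chern curvature $\Theta_H$ is Griffiths semipositive, i.e.\ $\Theta_H(u\otimes v, u\otimes v)\geq 0$ for every $u\in T_M, v\in T_M$. Taking the constant approximating family $H_\el := H$ for all $\el>0$, the inequality
\begin{equation*}
\Theta_{H_\el}(u\otimes v, u\otimes v) \geq 0 \geq -\el \Vert u\Vert^2_G \Vert v\Vert^2_{H_\el}
\end{equation*}
holds trivially, so $T_M$ is Griffiths nef in the sense of Definition \ref{nefff}. The stronger conditions in the definition of strong Griffiths nef also follow immediately: since $M$ is compact and $H$ is smooth and pointwise positive definite, both $H$ and $H^{-1}$ are smooth and uniformly bounded on $M$, so $\Vert H_\el^{-1}\Vert_{L^p(M)}$ is uniformly bounded with respect to $\el$ for every $p$ (in particular for some $p>r$), and the constant family $\{H_\el\}\subset C^0(M,\Her^+(T_M))$ is trivially equicontinuous and uniformly bounded.

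Hence $T_M$ is strong Griffiths nef. If in addition $M$ is Fano (which is implied by the hypothesis only when combined with the rational homogeneous conclusion) one could invoke the Main Theorem, but here we are only asserting homogeneity, so applying Theorem \ref{autm} directly yields that $M$ is a homogeneous complex manifold under the action of $\Aut(M)$. I expect no substantial obstacle: the corollary is essentially a trivial specialization, with the only content being the verification of the uniform $L^p$-boundedness and equicontinuity, both of which are free from compactness of $M$ and smoothness of $H$.
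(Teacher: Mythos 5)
Your reduction is exactly the paper's (implicit) argument: a single smooth Griffiths semipositive Hermitian metric $H$ on $T_M$ over compact $M$, taken as the constant family $H_\el = H$, trivially satisfies the defining conditions of strong Griffiths nefness (uniform $L^p$-bounds on $H_\el^{-1}$, equicontinuity and uniform boundedness of $H_\el$), so Theorem \ref{autm} applies directly. The paper states the corollary without proof precisely because of this triviality, and your proposal supplies the same specialization.
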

 
 \vskip 0.8cm
    
\noindent{\it Proof of the main theorem}.
By Theorem \ref{autm}, $M$ is a homogeneous complex manifold.  Since $M$ is meanwhile a projective K\"ahler manifold, by Borel-Remmert's theorem \cite{br}, $M$ is s the direct product of a complex torus and a projective-rational manifold. Note  any Fano manifold is simply connected, hence $M$ is a homogeneous  projective-rational manifold.
\qed

\vskip 0.8cm

\noindent{\bf Acknowledgments} I'm grateful to professor Yum-Tong Siu for his enlightening and stimulating discussions  and for explaining to me many of his important works and their backgrounds,  in particular the works related to the extensions theorem and positive closed currents; during the past year he not only taught me mathematics but also guided me to think mathematics in a simple way.
I would like to thank professor Shing-Tung Yau for encouragements and the friends in Yau's students siminar for helping me in many ways. I would take this chance to thank Professors Alan, Huckleberry, Peter, Heinzner and Takeo, Ohsawa, I studied under their guidance before and their works on complex Lie group actions and $L^2$-techniques in several complex variable
 inspired  me to think the problem studied in this paper for a long time.
 Finally I would like to thank  the department of mathematics of Harvard university for its hospitality. My visit to Harvard university is supported by the Lingnan foundation of Sun Yat-Sen university.

 \end{document}